\documentclass[10pt,a4paper,reqno]{amsart}

\usepackage[utf8]{inputenc}
\usepackage[T1]{fontenc}
\usepackage{amsthm,amsmath,amsfonts,amssymb,mathtools}
\usepackage[numbers]{natbib}
\usepackage{hyperref}
\hypersetup{hidelinks}
\usepackage{enumitem}
\usepackage{microtype}
\usepackage{mathrsfs}
\usepackage[a4paper,margin=1in]{geometry}

\allowdisplaybreaks
\numberwithin{equation}{section}

\theoremstyle{plain}
\newtheorem{theorem}{Theorem}[section]
\newtheorem{proposition}[theorem]{Proposition}
\newtheorem{lemma}[theorem]{Lemma}

\newtheorem{hypothesis}[theorem]{Hypothesis}

\theoremstyle{definition}
\newtheorem{definition}[theorem]{Definition}

\theoremstyle{remark}
\newtheorem{remark}[theorem]{Remark}

\renewcommand{\d}{\mathrm d}
\renewcommand{\i}{\mathrm i}
\newcommand{\Tr}{\mathrm{Tr}}
\newcommand{\Law}{\mathrm{Law}}
\newcommand{\Ent}{\mathrm{Ent}}

\newcommand{\E}{\mathbb E}

\newcommand{\Q}{\mathbb Q}
\newcommand{\cS}{\mathcal S}
\newcommand{\cL}{\mathcal L}
\newcommand{\cF}{\mathcal F}
\newcommand{\cD}{\mathcal D}
\newcommand{\cC}{\mathcal C}
\newcommand{\cE}{\mathcal E}
\newcommand{\cY}{\mathcal Y}
\newcommand{\HH}{\mathcal H}
\newcommand{\KK}{\mathcal K}
\newcommand{\1}{\mathbf 1}

\title[Belavkin equations beyond pure states]
{Propagation of Chaos for Belavkin Equations Beyond Pure States}

\author[G. Guo]{Gaoyue Guo}
\address{Universit\'e Paris-Saclay, CentraleSup\'elec, MICS and CNRS FR-3487}
\email{gaoyue.guo@centralesupelec.fr}

\subjclass[2020]{Primary 60K35, 60H10; Secondary 60F05, 81Q93, 81S25, 93E11}
\keywords{Belavkin equation, quantum filtering, 
propagation of chaos, McKean--Vlasov diffusion, stochastic BBGKY hierarchy,
Zakai equation, relative entropy}

\begin{document}

\begin{abstract}
We use probabilistic and stochastic-analysis methods to prove trace-norm
propagation of chaos for finite-dimensional quantum mean-field systems governed
by Belavkin equations.  The particles are density matrices,
interact through a mean-field Hamiltonian, and are continuously monitored
through independent diffusive observation channels.  The limiting dynamics is a
nonlinear matrix-valued McKean--Vlasov diffusion, random through its local
observation record and coupled through the deterministic averaged state.

The main result treats arbitrary one-particle density matrices, including mixed
states, and both perfect and inefficient measurement regimes.  Under strong
tensorization of the initial data, every fixed marginal converges uniformly on
compact time intervals to the tensor product of the nonlinear limiting filters,
with an explicit quantitative bound.  The proof combines purification, fully
observed dilation, conditional expectation, relative entropy, and uniform
stability of the associated Zakai equations.  In the skew-adjoint measurement
case, exterior observation noises disappear from the marginal equations and a
stochastic BBGKY hierarchy is recovered.  Under only marginal chaoticity of
permutation-invariant initial states, we prove convergence of fixed marginals by
an iteration of this hierarchy.
\end{abstract}

\maketitle

\setcounter{tocdepth}{2}\tableofcontents

\section{Introduction}
\label{sec:introduction}

Propagation of chaos is a law of large numbers for interacting particle
systems.  It asserts that, in the large-particle limit, every fixed
subcollection of particles becomes asymptotically independent and is described
by copies of an effective nonlinear one-particle dynamics.  In the present
paper the particles are quantum states.  More precisely, they are density
matrices on a finite-dimensional Hilbert space, interact through a mean-field
Hamiltonian, and are continuously monitored through independent diffusive
observation channels.  The limiting equation is a nonlinear matrix-valued
McKean--Vlasov diffusion.

The problem lies at the interface between quantum mechanics, stochastic
analysis and mean-field probability.  On the quantum side, the state of one
particle is not a point in a Euclidean phase space, but a positive trace-one
operator, which may be viewed as a noncommutative analogue of a probability
distribution.  The interaction is generated by commutators with Hamiltonians,
and the irreversible part of the dynamics is governed by Lindblad operators.
On the probabilistic side, the limiting object is a nonlinear stochastic
process: each particle is driven by its own observation record, while the
interaction with the population enters through the averaged one-particle
state.  Thus the quantum structure is encoded in the coefficients, whereas the
large-population limit has the form of a McKean--Vlasov law of large numbers.

Mean-field limits for closed quantum systems have been studied extensively.
Among the available methods are BBGKY hierarchies, counting functionals,
Heisenberg expansions, Egorov-type arguments, quantum empirical measures and
relative entropy.  We refer, among others, to Spohn \cite{spohn1980},
Bardos--Golse--Mauser \cite{bardos2000},
Bardos--Erd\H{o}s--Golse--Mauser--Yau \cite{bardos2002},
Fr\"ohlich--Graffi--Schwarz \cite{frohlich2007}, Ammari--Nier
\cite{Ammari2009}, Knowles--Pickl \cite{knowlespickl2010}, Pickl
\cite{pickl2011}, Golse--Paul \cite{golsepaul2017,golsepaul2019}, and
Guo--Liang--Wang \cite{Guo2026}.  In the closed quantum setting, the
\(N\)-particle density matrix solves the deterministic von Neumann equation
\[
  \i\partial_t\rho_t^N=[H^N,\rho_t^N].
\]
This equation is linear at the \(N\)-particle level, and after taking partial
traces the reduced density matrices satisfy a deterministic BBGKY hierarchy.
The equation for the \(n\)-particle marginal is coupled to the
\((n+1)\)-particle marginal only through the interaction term.  This triangular
structure is one of the basic mechanisms behind the derivation of Hartree
equations.

A similar simplification remains available for open quantum systems when the
environment is not continuously observed.  In that case the density matrix
satisfies a deterministic Lindblad equation.  The local dissipative
terms are trace-preserving on the variables which are traced out, and hence do
not create exterior stochastic records in the marginal equations.  The problem
is still quantum and may be technically difficult, but the reduced dynamics is
again deterministic.

The situation changes substantially for continuously observed systems.  We
study Belavkin equations, also called stochastic master equations, which
describe the conditional state of a quantum system given a measurement record,  
see, e.g.  Bouten--van Handel--James \cite{bouten2007}, Pellegrini
\cite{pellegrini2008}, and Mirrahimi--van Handel
\cite{mirrahimivanhandel2007}.  The normalized equation is nonlinear and
stochastic.  Each particle carries its own observation record, and the
stochastic coefficient depends on the full conditional state.  Consequently,
after one traces out particles, the Brownian motions attached to the traced
particles do not disappear in general.  The marginal equation for the observed
subsystem still contains information coming from exterior observation
channels.  This phenomenon, a kind of stochastic coupling created by conditioning on the
observation records, is the basic obstruction to a direct adaptation of the
BBGKY method.

The same obstruction appears in other forms.  In the Heisenberg picture,
testing against a local observable produces covariance terms with remote
observed coordinates, and the evolution of these covariances involves higher
centered correlations.  The standard empirical-measure strategy from
classical mean-field theory is also not directly available.  Indeed, for
classical particles one can encode the configuration by the random empirical
measure,  prove tightness of these empirical measures, and identify the limit by a
martingale problem.  For a general quantum state on \(\HH^{\otimes N}\),
there is no literal random empirical measure of the individual particles:
the state is a noncommutative object, and its marginals are obtained by
partial traces rather than by projecting a random configuration.  Quantum
empirical-measure methods provide a powerful replacement in closed systems,
but in the present continuously observed setting they do not close the
stochastic hierarchy at a fixed order.  Passing to the unnormalized Zakai
equation removes the explicit normalization, but the exterior observation
records still carry information about the particles that have been traced out.
Entropy methods are also delicate, since natural quantum relative-entropy
estimates may require faithfulness or uniform absolute-continuity assumptions
which are not stable in pure-state regimes.  Thus the main difficulty is not
only the mean-field interaction, but the interaction between marginalization
and stochastic conditioning.

This is one reason why probability and stochastic analysis enter the present
work in an essential way.  The problem is motivated by quantum filtering and
quantum control, but the mechanism of the proof is probabilistic: conditioning,
Girsanov transforms, relative entropy, conditional factorization, and stability
of Zakai equations are used to recover a law of large numbers in a
noncommutative state space.  In this sense, the paper uses stochastic analysis
to address a question arising naturally from continuously monitored and
controlled quantum systems.

Besides its intrinsic probabilistic interest, propagation of chaos also has a
model-reduction meaning for quantum simulation.  If \(d=\dim(\HH)\), then an
exact simulation of the \(N\)-particle conditional state requires matrices on
\(\HH^{\otimes N}\), hence of size \(d^N\times d^N\).  Even if one is
interested only in a fixed marginal \(\rho_t^{N:n}\), the Belavkin dynamics
does not provide a closed equation for that marginal in general.  Direct
simulation therefore retains an exponential dependence on \(N\).  A
propagation-of-chaos result justifies replacing \(\rho_t^{N:n}\) by
\[
  \Gamma_t^n
  =
  \gamma_t^1\otimes\cdots\otimes\gamma_t^n,
\]
where the processes \(\gamma^j\) are independent copies of an effective
one-particle nonlinear filter.  This replaces the many-body conditional
dynamics by matrix-valued McKean--Vlasov diffusions, together with the
deterministic evolution of the averaged state.  Such an approximation is
naturally accessible to Monte Carlo simulation and avoids the full many-body
Hilbert-space dimension.

This numerical viewpoint is consistent with the role of quantum trajectories
as stochastic unravellings of deterministic quantum dynamical semigroups, a
method widely used for Monte Carlo evaluation of open quantum dynamics.  It is
also connected with quantum feedback control, quantum dynamic games and
quantum mean-field games, where the state used for feedback is itself a
conditional state produced by observation.  For a recent systematic account of
quantum filtering, propagation of chaos and these applications, we refer to the survey  \cite{kolokoltsov2026survey} of
Kolokoltsov.  From this perspective, propagation
of chaos for Belavkin equations is a dynamic law of large numbers for
continuously monitored quantum systems, and it provides a probabilistic
foundation for replacing large observed or controlled quantum systems by
effective nonlinear filtering equations.

A first propagation-of-chaos theory for continuously observed quantum
many-particle systems was developed by Kolokoltsov
\cite{kolokoltsov2021,kolokoltsov2022}.  These works treat the pure-state,
perfect-efficiency regime.  In that case the evolution can be represented at
the wave-function level and controlled by a counting functional.  This method
uses the fact that the trace distance to a rank-one state is controlled by a
scalar quantity measuring the number of particles outside a prescribed
one-particle wave function.  When the initial state is mixed, or when the
measurement efficiency is strictly less than one, the conditional state is no
longer expected to remain pure.  The wave-function representation and the
associated counting functional are then no longer available.

The main purpose of the present paper is to prove propagation of chaos beyond
this pure-state setting.  Our first result treats arbitrary mixed one-particle
initial states and every measurement efficiency \(0<\eta\le1\), in finite
dimension.  If the initial \(N\)-particle state is close in trace norm to a
tensor product,
\[
  \delta_N
  :=
  \left\|
    \rho_0^N-\gamma_0^{\otimes N}
  \right\|_1
  \longrightarrow0,
\]
then every fixed marginal converges, uniformly on compact time intervals,
toward the corresponding tensor product of nonlinear limiting particles.  The
estimate is quantitative and contains both the initial tensorization error and
an explicit rate in the particle number.

The proof combines three probabilistic reductions.  First, at perfect
efficiency, mixed one-particle states are handled by purification.  One
realizes a mixed density matrix as the partial trace of a pure state on an
enlarged Hilbert space, lifts the Hamiltonian, interaction and measurement
operators, applies the pure-state propagation result, and finally traces out
the auxiliary variables.  The point is that this quantum-information
construction is compatible with both the interacting Belavkin dynamics and the
mean-field limit.

Second, inefficient observations are treated by a fully observed dilation.  The
unobserved part of each output channel is represented by an additional
Brownian motion, so that the enlarged system has perfect total observation.
Conditioning on the actually observed coordinates recovers the inefficient
equation.  At finite \(N\), however, this conditioning does not preserve a
product structure.  We compare the law of the interacting observed paths with
the product one-particle law by a compensating Girsanov transform.  The
resulting relative-entropy estimate, together with the entropy chain rule and
Pinsker's inequality, yields approximate conditional factorization.

Third, the exact tensor-product assumption on the full initial state is removed
through a stability estimate for the associated linear Zakai equation.  A
direct comparison of two nonlinear \(N\)-particle Belavkin equations would
lead to constants growing with the number of particles, because the stochastic
coefficient contains \(mN\) noise terms.  Instead, we use a common linear
reference equation.  Its solution map is positive and preserves the trace in
expectation.  These two properties give an \(N\)-uniform trace-norm stability
estimate for the unnormalized equations, and the normalization is handled by a
change of measure and a total-variation estimate.

Our second result concerns an exceptional skew-adjoint regime.  When
\[
  (L^{(k)})^*=-L^{(k)},
  \qquad k=1,\ldots,m,
\]
the innovation coefficient becomes linear.  In this case the exterior Brownian
motions vanish from the marginal equations by cyclicity of the partial trace,
and one recovers a stochastic BBGKY hierarchy.  This allows us to assume only
marginal chaoticity of the initial states, namely convergence of every fixed
initial marginal to the corresponding tensor product.  Under this weaker
assumption we prove convergence of every fixed marginal, without an explicit
rate, by closing the stochastic hierarchy through an iteration argument in the
spirit of Bardos--Golse--Mauser \cite{bardos2000}.

The paper is organized as follows.  Section~\ref{sec:framework} introduces the
finite-dimensional framework, states the main results, and gives a detailed
comparison with closed quantum systems and with Kolokoltsov's convergence
result.  Section~\ref{sec:purification} proves the perfect-efficiency
product-state estimate by purification.  Section~\ref{sec:inefficient} treats
inefficient observation through fully observed dilation, conditional
expectation and relative entropy.  Section~\ref{sec:initial-perturbation}
proves the \(N\)-uniform Zakai stability estimate and transfers the
product-state bounds to approximately tensorized initial data.  Finally,
Section~\ref{sec:skew-bbgky} proves the weak-tensorization result in the
skew-adjoint case by a stochastic BBGKY hierarchy.

\bigskip

\paragraph{\bf Acknowledgments.} This work was supported by ANR-25-CE40-0714 (MATH-SPA).  The author thanks Hao Liang and Zhenfu Wang for helpful discussions.

\section{Mathematical framework and main results}
\label{sec:framework}

Throughout the paper, \(\HH\) is reserved for a finite-dimensional complex Hilbert
space.  It represents the state space of a single quantum system,
for instance a spin, a qubit, a qudit, or a finite-level atom.  We keep this
notation abstract and do not choose coordinates.  The relevant state variable is
not only a \emph{wave function}  taking values in $\HH$,  but more generally identified by a \emph{density operator} on \(\HH\).
This is essential for continuously monitored open quantum systems.  Indeed, an
indirect measurement is usually modeled by coupling the system to an auxiliary
probe or environment and then measuring the outgoing probe.  Conditioning on
the random measurement record produces a stochastic evolution of the system
state, often called a quantum trajectory,  see e.g. Pellegrini \cite{pellegrini2008} and Bouten--van Handel--James
\cite{bouten2007}.

For \(N\) identical particles,  the one-particle space \(\HH\) is replaced by their
tensor product
\(
 \HH^{\otimes N}.
\)
One-particle observables are lifted to individual tensor factors, two-particle
interactions are lifted to pairs of tensor factors, and reduced states of
subsystems are obtained by partial traces.  These elementary operations provide
the notation used to formulate the \(N\)-particle Belavkin equation and its
mean-field limit introduced later. 

\subsection{Quantum states and observables}
\label{subsec:states-and-composite-systems}

Denote by $\cL(\HH)$ the space of linear operators on $\HH$,  and for
$X\in\cL(\HH)$, let
\[
  \|X\|
  :=
  \sup_{\psi\in\HH:\,\|\psi\|_{\HH}\le1}
  \|X\psi\|_{\HH},
  \qquad
  \|X\|_1
  :=
  \Tr\bigl(\sqrt{X^*X}\bigr),
  \qquad
  \|X\|_2
  :=
  \sqrt{\Tr(X^*X)}
\]
denote, respectively, the \emph{operator},  \emph{trace} and \emph{Hilbert--Schmidt norms}.  Since
$\HH$ is finite-dimensional, these norms are equivalent. Namely,  
\[
  \|X\|
  \le
  \|X\|_2
  \le
  \|X\|_1
  \le
  \dim(\HH)\,\|X\|,\qquad \mbox{ for all } X\in\cL(\HH).
\]
The corresponding \emph{Hilbert--Schmidt inner product} is given as 
\(
  \langle X,Y\rangle_{\mathrm{HS}}
  :=
  \Tr(X^*Y). 
\)
For $X,Y\in\cL(\HH)$, we use the notation
\[
  [X,Y]:=XY-YX,
  \qquad
  \{X,Y\}:=XY+YX
\]
for the \emph{commutator} and \emph{anticommutator}.  Further,  we introduce the following subsets of $\cL(\HH)$:
\begin{align*}
  \cL^*(\HH)
  &:=
  \{X\in\cL(\HH):X^*=X\},
  \\
  \cL^+(\HH)
  &:=
  \{X\in\cL^*(\HH):X\ge0\},
  \\
  \cS(\HH)
  &:=
  \{\rho\in\cL^+(\HH):\Tr(\rho)=1\}.
\end{align*}
Elements of $\cS(\HH)$ are called density operators,  or density matrices in the finite-dimensional setting,  and represent \emph{quantum states},  while elements of $\cL^*(\HH)$ stand for  \emph{observables}.  If
$A\in\cL^*(\HH)$ and $\rho\in\cS(\HH)$, then
\(
  \Tr(\rho A)
\)
stands for the expectation of the observable $A$ in the state $\rho$.

\medskip

A state $\rho\in\cS(\HH)$ is called \emph{pure} if it has rank one,  i.e.,  $\operatorname{rank}(\rho)=1$.  
Equivalently, there exists some unit vector $\psi\in\HH$,  identified as a wave function,  such that
\(
  \rho=|\psi\rangle\langle\psi|,
\)
where $ |\psi\rangle\langle\psi| \in \cL(\HH)$ denotes the \emph{projection operator} given as
\[
  |\psi\rangle\langle\psi|\,x
  :=
  \langle\psi,x\rangle_{\HH}\,\psi,
  \qquad
  x\in\HH.
\]
Thus a wave function determines a pure density operator, and two wave functions
which differ only by a global phase determine the same physical state.  A state
which is not pure is called \emph{mixed}.  Every mixed state $\rho$ admits a spectral
representation
\[
  \rho
  =
  \sum_{r=1}^{\operatorname{rank}(\rho)}
  \lambda_r|\psi_r\rangle\langle\psi_r|,
  \qquad
  \lambda_r>0,
  \qquad
  \sum_{r=1}^{\operatorname{rank}(\rho)}\lambda_r=1.
\]
The numbers
\(\lambda_r\) are scalar weights, whereas the operator \(\rho\) is the
state itself.  A mixed state may describe classical statistical uncertainty,
but it may also arise as the reduced state of a larger entangled quantum
system.  This distinction is one reason why density operators, rather than
only wave functions, are the natural state variables for open quantum
systems.

\medskip

For $N\ge1$, let
\(
  [N]:=\{1,\ldots,N\}
\) 
and
  \(
  \HH_N:=\HH^{\otimes N}
\)
be the $N$-tensor product of $\HH$.  The space \(\HH_N\) is the Hilbert space of \(N\) labeled copies of the
one-particle system.  Product density operators describe uncorrelated
configurations, while a general element of $\cS(\HH_N)$ may contain both
classical correlations and quantum entanglement.  

\medskip

If \(J=\{j_1<\cdots<j_r\}\subset[N]\), we set
\[
  \HH_J
  :=
  \HH_{j_1}\otimes\cdots\otimes\HH_{j_r},
  \qquad
  \HH_{J^c}
  :=
  \bigotimes_{j\in J^c} \HH_j,
\]
where \(\HH_j\) denotes the \(j\)-th copy of \(\HH\). We use
the convention that the tensor product over the empty set is \(\mathbb C\).
Thus \(\HH_J\) is canonically identified with \(\HH_{|J|}\).  Let 
\[
  \Tr_{J^c}:
  \cL(\HH_N)
  \longrightarrow
  \cL(\HH_{J})
\]
denote the \emph{partial trace}  which keeps the tensor factors indexed by $J$ and
traces out the complementary factors.  It is characterized by the duality
relation
\[
  \Tr\left(
    A\,\Tr_{J^c}(X)
  \right)
  =
  \Tr\left(
    (A\otimes I_{\HH_{J^c}})X
  \right)
\]
for every local observable $A$ on the factors indexed by $J$, after the
canonical reshuffling of tensor factors.  When no confusion is possible, we identify \(\HH_J\) with
\(\HH_{|J|}\).  In particular, the partial trace
extracts the state seen by a subsystem.

\medskip

For $A\in\cL(\HH)$ and $j\in[N]$, let $A_j\in\cL(\HH_N)$ denote the lift
of $A$ to the $j$-th tensor factor:
\[
  A_j
  :=
  I_{\HH}^{\otimes(j-1)}
  \otimes A\otimes
  I_{\HH}^{\otimes(N-j)}.
\]
Similarly, if $O \in\cL(\HH\otimes\HH)$ and $1\le j<k\le N$, then
$O_{jk}$ denotes the lift of $O$ to the $j$-th and $k$-th tensor
factors.

\subsection{Quantum system under continuous indirect measurements}
\label{subsec:continuously-monitored-systems}

We fix a self-adjoint one-particle Hamiltonian
\[
  h=h^*\in\cL^*(\HH),
\]
a finite family of measurement,  or coupling,  operators
\[
  L^{(1)},\ldots,L^{(m)}\in\cL(\HH),
  \qquad
  m\ge1,
\]
and a self-adjoint two-body  interaction
\[
  V=V^*\in\cL^*(\HH\otimes\HH)
\]
which is symmetric,  i.e.,   invariant under exchange of the two tensor factors.  The operator
\(h\) generates the coherent evolution of one isolated particle.  The operator
\(V\) describes the interaction between two particles.  The operator
\(L^{(k)}\) describes the coupling of a particle to the \(k\)-th output
channel,  and the associated observed quadrature is
\(L^{(k)}+(L^{(k)})^*\).  Thus the same operator \(L^{(k)}\) leads to  both
the dissipative back-action of the environment and the stochastic update
generated by the measurement record.

\medskip

For $\sigma\in\cS(\HH)$, define the effective one-particle interaction
\begin{equation}
  V^\sigma
  :=
  \Tr_{\{2\}}
  \bigl(
    V(I_{\HH}\otimes\sigma)
  \bigr),
  \label{eq:effective-potential}
\end{equation}
where we recall that $\Tr_{\{2\}}$ denotes the partial trace over the second tensor factor. The operator \(V^\sigma\) is the mean field created by infinitely many particles of the same  state 
\(\sigma\).  
The cyclicity of the partial trace on the traced factor gives
\begin{equation}
  \Tr_{\{2\}}
  \bigl(
    [V,\gamma\otimes\sigma]
  \bigr)
  =
  [V^\sigma,\gamma],
  \qquad \mbox{ for all }
  \gamma,\sigma\in\cS(\HH).
  \label{eq:effective-commutator}
\end{equation}
For every $L\in\cL(\HH)$, define the maps $\cD_L$,  $\cC_L$ and $\cE_L$ on $\cL(\HH)$ by
\begin{align}
  \cD_L(X)
  &:=
  LXL^*-\frac12\{L^*L,X\},
  \label{eq:D-def}
  \\
  \cC_L(X)
  &:=
  LX+XL^*,
  \label{eq:C-def}
  \\
  \cE_L(X)
  &:=
  \cC_L(X)
  -
  \Tr\bigl((L+L^*)X\bigr)X.
  \label{eq:E-def}
\end{align}
The map $\cD_L$ is the Lindblad dissipator generated by the coupling \(L\).
It describes the irreversible back-action of the environment on the state.  The
map $\cC_L$ is the linear coefficient appearing in the (unnormalized) Zakai 
equation, while $\cE_L$ is its normalized correction
which updates the conditional state after a measurement,  and
keeps the trace equal to one.  We use the same notation with a subscript \(j\)
when the corresponding operator acts on the \(j\)-th tensor factor.

\medskip

The mean-field $N$-particle Hamiltonian is
\begin{equation}
  H^N
  :=
  \sum_{j=1}^N h_j
  +
  \frac1N
  \sum_{1\le j<\ell\le N}
  V_{j\ell}.
  \label{eq:HN}
\end{equation}
The factor $N^{-1}$ is the mean-field scaling: every particle interacts weakly
with every other particle, but the total interaction experienced by one
particle remains of order one as $N\longrightarrow\infty$.

\medskip

We now describe the observation model.  The system is coupled to an
auxiliary probe or output field, and the outgoing probe is measured.  The
conditional state, given the observation record, then follows a stochastic
trajectory.  In the diffusive case such trajectories are described by Belavkin
equations, or equivalently by stochastic master equations,  see, e.g.  Bouten--van Handel--James \cite{bouten2007} and Pellegrini
\cite{pellegrini2008}.  We assume that each particle is monitored
separately through the same \(m\) output channels.

\medskip

Let \(\eta\in(0,1]\) be the common\footnote{For notational simplicity we take a common efficiency \(\eta\).  The same
arguments extend to channel-dependent efficiencies
\(\eta_1,\ldots,\eta_m\in(0,1]\), with constants depending on these parameters.} measurement efficiency.   Perfect detection
corresponds to \(\eta=1\), whereas \(0<\eta<1\) means that only part of the
information carried by the output fields is observed. 

\medskip 

For an initial state $\rho_0^N\in\cS(\HH_N)$, the state of the \(N\)-particle system evolves according to 
the Belavkin equation
\begin{equation}
\begin{aligned}
  \d\rho_t^N
  &=
  -\i[H^N,\rho_t^N]\,\d t
  +
  \sum_{k=1}^m\sum_{j=1}^N
  \cD_{L_j^{(k)}}(\rho_t^N)\,\d t
 +
  \sqrt{\eta}
  \sum_{k=1}^m\sum_{j=1}^N
  \cE_{L_j^{(k)}}(\rho_t^N)\,\d W_t^{k,j},
\end{aligned}
  \label{eq:N-Belavkin}
\end{equation}
where $W^{k,j}$ are independent real-valued Brownian motions.  The Hamiltonian commutator gives the coherent many-body evolution.  The
Lindblad terms describe the full back-action of the couplings to the output
fields, whether or not those fields are detected.  The stochastic terms encode
the information gained from the observed part of the outputs,  corresponding to the fact that 
$\eta$ appears in the innovation coefficients.  

\medskip

There is an important pure-state reduction.  If \(\eta=1\) and the initial
state $\rho^N_0$ is pure, then the solution of \eqref{eq:N-Belavkin} remains pure,  saying  \(\rho_t^N=|\Psi_t^N\rangle\langle\Psi_t^N|\),  where the wave function
\(\Psi_t^N\) satisfies the \emph{nonlinear stochastic Schr\"odinger equation}.
This formulation with respect to the  \(N\)-particle wave function  has been extensively studied, both in finite
dimension and in infinite-dimensional Hilbert spaces,  see e.g. Mora--Rebolledo \cite{mora2008}.  Under the pure-state assumption, recent work of de Bouard--Guo--H\'erouard
\cite{bouard2026} also derives infinite-dimensional mean-field limits in this
wave-function setting.  For  the general Belavkin equation \eqref{eq:N-Belavkin},  finite-dimensional well-posedness with respect to the strong solution  was proved by Pellegrini \cite{pellegrini2008}  for
\(\dim(\HH)=2\) and by Mirrahimi--van Handel
\cite{mirrahimivanhandel2007}  for general
finite-dimensional \(\HH\).  More recently,  Kolokoltsov obtained
infinite-dimensional well-posedness results in
\citep{kolokoltsov2025a,kolokoltsov2025b}.

\medskip

For $N$ interacting particles, however, this conditional state belongs to
$\cS(\HH_N)$, whose dimension grows exponentially in $N$.  A
propagation-of-chaos theorem is therefore a model-reduction result which justifies
replacing the many-particle conditional dynamics, at the level of any fixed
number of particles, by independent copies of an effective one-particle filter.  As $N\longrightarrow \infty$, we recover formally the limiting dynamics,  i.e.,  matrix-valued McKean--Vlasov equation
\begin{equation}
\begin{aligned}
  \d\gamma_t^j
  =
  \left(
    -\i[h+V^{\xi_t},\gamma_t^j]
    +
    \sum_{k=1}^m
    \cD_{L^{(k)}}(\gamma_t^j)
  \right)\d t+
  \sqrt{\eta}
  \sum_{k=1}^m
  \cE_{L^{(k)}}(\gamma_t^j)\,\d W_t^{k,j},
  \qquad
  \xi_t:=\E[\gamma_t^j],
\end{aligned}
  \label{eq:MF-Belavkin}
\end{equation}
where $ \gamma_0^j=
  \gamma_0\in\cS(\HH)$.  
The processes $\gamma^1,\gamma^2,\ldots$ are independent and identically distributed.  Their interaction
occurs only through the deterministic mean state $\xi_t$.  Taking expectations
in \eqref{eq:MF-Belavkin} gives the Hartree equation 
\begin{equation}
  \frac{\d}{\d t}\xi_t
  =
  -\i[h+V^{\xi_t},\xi_t]
  +
  \sum_{k=1}^m
  \cD_{L^{(k)}}(\xi_t),
  \qquad
  \xi_0=\gamma_0.
  \label{eq:mean-state-equation}
\end{equation}
Thus the random limiting state is driven by the local measurement record,  whereas 
its mean-field Hamiltonian is determined by the average state of the ensemble.  Well-posedness of   \eqref{eq:MF-Belavkin} is derived by 
Amini--Amini--Chalal--Guo
 \cite{Amini2025} for finite-dimensional $\HH$,  while it remains  open  for general infinite-dimensional $\HH$.  In the present finite-dimensional setting,  we shall use this well-posedness without further comment.

\medskip 

For $J\subset [N]$,  define the $J$-marginal by
\[
  \rho_t^{N:J}
  :=
  \Tr_{J^c}(\rho_t^N)
\]
and write in particular 
\(
  \rho_t^{N:n}
  :=
  \Tr_{[N]\setminus[n]}(\rho_t^N)
\) for 
$1\le n\le N$.   The product of the limiting one-particle processes is given as 
\(
  \Gamma_t^n
  :=
  \gamma_t^1\otimes\cdots\otimes\gamma_t^n.
\)

\begin{definition}[Propagation of chaos]
\label{def:propagation-of-chaos}
Let $\rho_0^N\in\cS(\HH_N)$ and $\gamma_0\in\cS(\HH)$.  Propagation of chaos is said to hold if, for every
fixed $n\ge1$ and $T>0$,
\begin{equation}
  \sup_{0\le t\le T}
  \E\left[
    \left\|
      \rho_t^{N:n}-\Gamma_t^n
    \right\|_1
  \right]
  \longrightarrow0
  \qquad
  \text{as }N\longrightarrow \infty.
  \label{eq:propagation-of-chaos}
\end{equation}
\end{definition}

The convergence in \eqref{eq:propagation-of-chaos} says that every fixed
collection of particles is asymptotically described by independent copies of the
nonlinear one-particle filter.  Under the pathwise well-posedness of \eqref{eq:N-Belavkin} and  \eqref{eq:MF-Belavkin},  the
joint law of the synchronously coupled processes is uniquely determined.  Hence
the expectation in \eqref{eq:propagation-of-chaos} is independent of the
particular filtered probability space on which the coupled equations  \eqref{eq:N-Belavkin},  \eqref{eq:MF-Belavkin} are
defined.

\medskip

For later use, let $\mathfrak S_N$ denote the permutation group of $[N]$.
For $\pi\in\mathfrak S_N$, let $U_\pi\in\cL(\HH_N)$ be the unitary operator 
defined by
\[
  U_\pi(x_1\otimes\cdots\otimes x_N)
  :=
  x_{\pi^{-1}(1)}\otimes\cdots\otimes x_{\pi^{-1}(N)}.
\]
\begin{definition}[Permutation invariance]
\label{def:exchangeability}
A  state $\rho^N\in\cS(\HH_N)$ is called permutation invariant if 
\[
  U_\pi\rho^N U_\pi^*=\rho^N,
  \qquad
  \mbox{ for all } \pi\in\mathfrak S_N.
\]
This is the labelled-particle analogue of the symmetry appearing in bosonic
closed quantum systems, although we do not restrict the state space to the symmetric
tensor product.
\end{definition}

This symmetry is preserved by the Belavkin dynamics in the sense of distributions.  Indeed, if
$\rho_0^N$ permutation invariant, then
$U_\pi\rho_t^N U_\pi^*$ solves the same equation as $\rho_t^N$, but driven by
the relabelled Brownian family
\[
  (W^{k,\pi^{-1}(j)})_{1\le k\le m,\ 1\le j\le N}.
\]
Since this relabelled family has the same law as the original Brownian family,
and since $H^N$ and the measurement coefficients are invariant under particle
relabeling, uniqueness in law implies
\[
  (U_\pi\rho_t^N U_\pi^*)_{t\ge0}
  \stackrel{\mathrm{law}}{=}
  (\rho_t^N)_{t\ge0}.
\]
We shall refer to this property as \emph{exchangeability}.  The same terminology will
be used for coupled families, such as
\[
  \bigl(R^N,Z^1,\ldots,Z^N\bigr),
\]
where exchangeability means invariance in law under the simultaneous relabeling
\[
  \bigl(R^N,Z^1,\ldots,Z^N\bigr)
  \longmapsto
  \bigl(U_\pi R^N U_\pi^*,Z^{\pi^{-1}(1)},\ldots,Z^{\pi^{-1}(N)}\bigr).
\]
In particular, exact tensor-product initial data
$\rho_0^N=\gamma_0^{\otimes N}$ yields the exchangeability properties used
below.

\subsection{Main results}
\label{subsec:main-results}

We now state the main results.  

\begin{theorem}
\label{thm:main-approx}
Assume that
\begin{equation}
  \delta_N
  :=
  \left\|
    \rho_0^N-\gamma_0^{\otimes N}
  \right\|_1
  \longrightarrow0.
  \label{eq:strong-tensorization}
\end{equation}
Then propagation of chaos holds.  More precisely, for every fixed $n\ge1$ and
$T>0$, there exists a constant $C>0$ depending on $n, T,  h,V,L^{(1)},\ldots,L^{(m)},\eta $ 
such that
\begin{equation}
\begin{aligned}
  \sup_{0\le t\le T}
  \E\left[
    \left\|
      \rho_t^{N:n}
      -
      \Gamma_t^n
    \right\|_1
  \right]
   \le C\delta_N+ 
     \frac{C}{N^{1/4}}{\mathbf 1}_{\{\eta=1\}} + \frac{C}{N^{1/16}}{\mathbf 1}_{\{0<\eta<1\}}.
\end{aligned}
  \label{eq:main-approx-bound}
\end{equation}
\end{theorem}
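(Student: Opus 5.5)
The proof has three stages, following the reductions announced in the introduction: product initial data with $\eta=1$, then product initial data with $0<\eta<1$, and finally approximately tensorized initial data.

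\textbf{Stage 1: $\rho_0^N=\gamma_0^{\otimes N}$ and $\eta=1$.} I would purify $\gamma_0$. Write $\gamma_0=\Tr_{\KK}|\psi_0\rangle\langle\psi_0|$ with $\psi_0\in\HH\otimes\KK$ and $\dim\KK=\operatorname{rank}\gamma_0$. Lift the data to the enlarged space by
\[
h\mapsto h\otimes I_\KK,\qquad L^{(k)}\mapsto L^{(k)}\otimes I_\KK,\qquad V\mapsto V\otimes I_{\KK\otimes\KK},
\]
with the obvious reshuffling of tensor factors. The lifted $N$-particle Belavkin equation, started from $\psi_0^{\otimes N}$, stays pure. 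Because the lifted operators act trivially on $\KK$ and partial traces commute with the coefficients, its partial trace over the auxiliary factors solves \eqref{eq:N-Belavkin}. The same holds for the limit \eqref{eq:MF-Belavkin}: one checks that $V^{\xi}$ lifts consistently. By pathwise uniqueness, these partial traces coincide with $\rho_t^N$ and $\gamma_t^j$.

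Next I would apply the pure-state counting-functional estimate in the style of Kolokoltsov to the enlarged system. That argument is made quantitative by a Gr\"onwall bound on $\E[\text{fraction of particles outside }\psi_t^j]$, which is of order $N^{-1/2}$. Converting to trace distance costs a square root, giving $N^{-1/4}$. Finally, partial trace is contractive in trace norm, which transfers the bound back to $\HH$.

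\textbf{Stage 2: $\rho_0^N=\gamma_0^{\otimes N}$ and $0<\eta<1$.} I would use the fully observed dilation. Introduce independent Brownian motions $B^{k,j}$ for the unobserved output parts, and let $\tilde\rho^N$ be the $\eta=1$ filter driven by the total outputs, written in terms of $W$ and $B$. Then the inefficient filter is the conditional expectation
\[
\rho_t^N=\E[\tilde\rho_t^N\mid\cF^W_t].
\]
Stage 1 applies to $\tilde\rho^N$ and gives a comparison with $\tilde\Gamma^n_t=\otimes_{j\le n}\tilde\gamma^j_t$, where $\tilde\gamma^j$ are the dilated limiting filters.

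Conditioning on $\cF^W$ does not factorize exactly. The reason is that under the interacting law the observation paths of particles $1,\ldots,n$ are not independent. I would handle this with Girsanov. The drift of the observation processes is $\Tr((L+L^*)\cdot)$ evaluated on the one-particle marginals. Compare the law of the $n$ observation paths under the interacting system with their law under the product limit. The relative entropy is bounded by $\E\int_0^T\sum_{j\le n}|\Tr((L+L^*)(\rho^{N:\{j\}}_s-\tilde\gamma_s^j))|^2\,\d s$. By Stage 1 this is $O(N^{-1/4})$.

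Pinsker's inequality then yields a total variation bound of $O(N^{-1/8})$. Combining this with the conditional Jensen inequality to pass conditional expectations through the trace norm should give $\E\|\E[\tilde\rho^{N:n}_t\mid\cF^W]-\Gamma^n_t\|_1$. I expect the bookkeeping to give $N^{-1/16}$: one more square root appears when converting the conditional total-variation control back into an expectation bound.

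\textbf{Stage 3: general initial data with $\delta_N\to0$.} I would avoid comparing two nonlinear $N$-particle equations directly, since the $mN$ noise terms would produce constants growing with $N$. Instead I would work with the linear Zakai equation. It is driven by the observation processes $Y$ under a reference measure $\Q$ under which $Y$ is Brownian, and its solution map $\Phi_t$ is linear, completely positive and trace-preserving in $\Q$-expectation. Positivity gives
\[
\E_\Q\|\Phi_t(X)\|_1\le\|X\|_1,
\]
by splitting $X$ into positive and negative parts. Applying this to $X=\rho_0^N-\gamma_0^{\otimes N}$, and taking partial traces, gives an $N$-uniform bound $\delta_N$ on the unnormalized states.

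Returning to the normalized states and the physical measure requires two further ingredients. The first is the Kallianpur--Striebel formula $\rho=\sigma/\Tr\sigma$. The second is a total-variation estimate between the two physical laws of $Y$. That estimate follows because the density of each law with respect to $\Q$ is the trace of the corresponding unnormalized solution, so the densities differ in $L^1(\Q)$ by at most $\delta_N$. Both normalization errors are therefore controlled by $C\delta_N$.

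\textbf{Main obstacle.} I expect Stage 2, the approximate conditional factorization under inefficiency, to be the hardest step, together with correctly coupling the $W$-driven and product systems on one probability space.
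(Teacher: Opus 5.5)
Stage 1 and the core of Stage 3 follow the paper's purification and Zakai-stability arguments, but Stage 2 has a genuine gap. The inefficient marginal is $\E[R^{N:n}_t\mid\cY^N_t]$, conditioned on all $N$ records. What must be controlled is therefore the conditional law of $(X^1,\ldots,X^n)$ given $(Y^1,\ldots,Y^N)$, against $\bigotimes_{j\le n}\mathbf q_t^{X\mid Y=Y^j}$. An entropy bound on the law of the $n$ paths $Y^1,\ldots,Y^n$ says nothing about the conditional law of $X$ given $Y$. Nor does it control the information that $Y^{n+1},\ldots,Y^N$ carry on $X^1,\ldots,X^n$ through $R^N$. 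Moreover, a Girsanov tilt of the observed Brownian motions alone would also change the SDEs of the dilated limits $X^j$, which are driven by the same Brownian motions, so you would not reach the product ideal law. The missing device is a compensating tilt of the unobserved channel as well: take $u^{k,j,1}=\sqrt\eta\,\Delta^{k,j}$ and $u^{k,j,2}=-\eta(1-\eta)^{-1/2}\Delta^{k,j}$ with $\Delta^{k,j}=b_j^{N,k}(R^N)-b^{(k)}(X^j)$, so that $\sqrt\eta\,u^{k,j,1}+\sqrt{1-\eta}\,u^{k,j,2}=0$. This leaves every $X^j$ unchanged and makes the law of all $N$ pairs $(X^j,Y^j)$ exactly $\mathbf q_T^{\otimes N}$. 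Its entropy cost is $\frac{\eta}{2(1-\eta)}\sum_{k,j}\E\int_0^T|\Delta_t^{k,j}|^2\,\d t=O(N^{3/4})$. The chain rule, superadditivity over $\lfloor N/n\rfloor$ disjoint blocks and exchangeability then bound the averaged conditional entropy of one $n$-block by $O(N^{-1/4})$, and Pinsker gives $N^{-1/8}$. The factor $(1-\eta)^{-1}$ is why $\eta=1$ must be done separately.

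The last square root also does not come from ``converting conditional total variation into an expectation''. After factorization you hold $\bigotimes_{j\le n}\Pi_t(Y^j)$, where $\Pi$ is the one-particle inefficient filter computed from $Y^j$ alone. This is not the limit filter synchronously coupled to $\rho^{N,\eta}$, since under the physical measure $Y^j$ has drift $\sqrt\eta\,b_j^{N,k}(\rho^{N,\eta})$. Rewriting the equation of $\Pi(Y^j)$ in terms of the innovation $W^{N,k,j}$ produces the extra drift $\eta\sum_k\cE_{L^{(k)}}(\Pi_t)\bigl(b_j^{N,k}(\rho_t^{N,\eta})-b^{(k)}(\Pi_t)\bigr)$. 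A BDG--Gronwall comparison with the limit filter $\gamma^{N,\eta,j}$ driven by $W^{N,k,j}$ then gives an $L^2$ error $O(N^{-1/8})$, hence $N^{-1/16}$ by Cauchy--Schwarz. This is precisely the coupling step you list as an obstacle.

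Two smaller points. First, the conditioning must be on the filtration of the observed records, not of the Brownian motion driving $\tilde\rho^N$; they differ by the drift $\sqrt\eta\int_0^t b(\tilde\rho^N_s)\,\d s$. This is why the paper works under a reference measure making the records Brownian and uses Bayes' formula. Second, in Stage 3 the innovations $W^{N,\theta}$ depend on the initial state, so besides the normalization and total-variation terms you need $\E_{\Q^{N,\alpha}}\|\gamma^{N,\alpha,j}_t-\gamma^{N,\beta,j}_t\|_1\le C_T\delta_N$. This follows by Gronwall from $\d W^{N,\beta}=\d W^{N,\alpha}+\sqrt\eta\,\bigl(b(\rho^{N,\alpha})-b(\rho^{N,\beta})\bigr)\d t$.
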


\begin{remark}
Theorem~\ref{thm:main-approx}  simultaneously removes the two restrictions in Kolokoltsov 
\citep{kolokoltsov2021,kolokoltsov2022}: the initial one-particle state may
be genuinely mixed, and the measurement efficiency $\eta$ may take values in 
$(0,1)$.  In addition,  no exact permutation invariance of $\rho_0^N$ is assumed.
\end{remark}

\begin{theorem}
\label{thm:main-anti}
Assume that $L^{(1)},\ldots, L^{(m)}$ are skew-adjoint and 
 the initial states $\rho^N_0$ are permutation invariant.  Then the propagation of chaos holds provided that,  for every fixed $n\ge 1$, 
\begin{equation}
  \delta_{N}^n
  :=
  \left\|
    \rho_0^{N:n}-\gamma_0^{\otimes n}
  \right\|_1
  \longrightarrow 0.
  \label{eq:weak-tensorization}
\end{equation}
\end{theorem}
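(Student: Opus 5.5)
The plan is to exploit the fact that, for skew-adjoint couplings, the Belavkin equation \eqref{eq:N-Belavkin} becomes \emph{linear}. Write it in Duhamel form along random unitary flows, derive a stochastic BBGKY hierarchy for the marginals \(\rho_t^{N:n}\), and close it by a truncated Dyson expansion in the spirit of Bardos--Golse--Mauser \cite{bardos2000}, iterated over short time steps.

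\textbf{Step 1: linearization and unitary structure.} If \((L^{(k)})^*=-L^{(k)}\), then \(L+L^*=0\), so \(\cE_L(X)=\cC_L(X)=[L,X]\) and \(\cD_L(X)=-LXL+\tfrac12\{L^2,X\}=\tfrac12[L,[L,X]]\). Hence \eqref{eq:N-Belavkin} is the Itô form of a linear Stratonovich equation,
\[
  \d\rho_t^N=-\i[H^N,\rho_t^N]\,\d t+\sqrt\eta\sum_{k,j}[L^{(k)}_j,\rho_t^N]\circ\d W_t^{k,j}.
\]
The Itô correction of the Stratonovich term is \(\tfrac{\eta}{2}[L,[L,\cdot]]\), not \(\tfrac12[L,[L,\cdot]]\). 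The leftover \((1-\eta)\,\cD_L\) is a deterministic unital, trace-preserving Lindblad term. So for \(\eta<1\) the flow is a random unitary conjugation composed with a deterministic contraction semigroup. In either case, the one-particle flow \(\mathcal U^{j}_{t,s}\) generated by \(h_j\), the dissipator and the noise \(W^{\cdot,j}\) is a trace-norm contraction that preserves positivity and trace. It is adapted to the increments of \(W^{\cdot,j}\) on \([s,t]\). The same structure holds for \eqref{eq:MF-Belavkin}, with \(h\) replaced by \(h+V^{\xi_t}\).

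\textbf{Step 2: the stochastic hierarchy.} Take \(\Tr_{[N]\setminus[n]}\) of the equation. Every term indexed by an exterior particle \(j>n\) (commutator noise, dissipator, \(h_j\)) vanishes by cyclicity of the partial trace, and so do interactions \(V_{j\ell}\) with \(j,\ell>n\). Let \(\mathcal U^{[n]}_{t,s}:=\bigotimes_{j\le n}\mathcal U^{j}_{t,s}\). Duhamel's formula then gives
\[
  \rho_t^{N:n}=\mathcal U^{[n]}_{t,0}\rho_0^{N:n}
  -\frac{\i}{N}\int_0^t\mathcal U^{[n]}_{t,s}\Bigl(\sum_{j<\ell\le n}[V_{j\ell},\rho_s^{N:n}]
  +\sum_{j\le n}\sum_{\ell>n}\Tr_{\ell}[V_{j\ell},\rho_s^{N:[n]\cup\{\ell\}}]\Bigr)\d s .
\]
The first interaction sum is \(O(n^2/N)\) in trace norm. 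The limit \(\Gamma^n\) satisfies the analogous equation with the second sum replaced by \(\sum_{j\le n}\Tr_{n+1}[V_{j,n+1},\Gamma_s^n\otimes\xi_s]\), using \eqref{eq:effective-commutator}.

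Iterating both expansions \(K\) times expresses \(\rho_t^{N:n}\) and \(\Gamma_t^n\) as finite sums of iterated time integrals of products of contractions and collision operators. These act on \(\rho_0^{N:n+k}\) and \(\gamma_0^{\otimes(n+k)}\) respectively, \(k\le K\), and each sum has a remainder. Each collision operator has norm at most \(2(n+k)\|V\|\), so the \(k\)-th term is bounded by \(\binom{n+k}{k}(2\|V\|t)^k\) and the remainder is geometrically small for \(t\le t_0:=(8\|V\|)^{-1}\). For such \(t\), the term-by-term differences are of three kinds:
\begin{enumerate}[label=(\roman*)]
  \item initial errors, bounded by \(\delta_N^{n+k}\) by contractivity;
  \item combinatorial errors, of order \((n+k)^2/N\), coming from the factor \((N-n)/N\) and from interior pairs;
  \item the replacement of the average over exterior particles \(\ell\) of marginals driven by the noises \(W^{\cdot,\ell}\) by the deterministic \(\xi_s\).
\end{enumerate}
Letting \(N\to\infty\) and then \(K\to\infty\) gives the result on \([0,t_0]\). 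Because \(t_0\) does not depend on the data, I then restart at \(t_0,2t_0,\dots\). This requires propagating, at each restart time, the \emph{weak} chaoticity \eqref{eq:weak-tensorization} in the adapted form \(\E\|\rho_{t_0}^{N:n}-\Gamma_{t_0}^n\|_1\to0\) for all \(n\). To make this possible, I would run the whole expansion with a random (but \(N\)-independent) comparison state \(\Gamma^{n+k}_{t_0}\) in place of \(\gamma_0^{\otimes(n+k)}\).

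\textbf{Main obstacle: item (iii).} The marginal \(\rho_s^{N:[n]\cup\{\ell\}}\) genuinely depends on \(W^{\cdot,\ell}\) and on all other exterior noises through the interaction. Pathwise, the exterior Brownian motions have therefore not disappeared; they have been pushed up the hierarchy. What I would try first is to condition on the interior filtration \(\cF^{[n+k]}\) generated by \(W^{\cdot,j}\), \(j\le n+k\). Averaging a linear equation over independent noises turns the exterior flows \(\mathcal U^{\ell}\) into the deterministic Lindblad semigroup. Permutation invariance of \(\rho_0^N\) under relabelling of \(\{n+k+1,\dots,N\}\) then makes \(\E[\rho_s^{N:[n+k]\cup\{\ell\}}\mid\cF^{[n+k]}]\) independent of \(\ell\). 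This matches the limit, where the exterior particle enters only through \(\xi_s=\E[\gamma_s^{\ell}]\). The remaining fluctuation \(\frac1N\sum_{\ell}(X_\ell-\E[X_\ell\mid\cF^{[n+k]}])\) must be shown to vanish in \(L^1\). I would attempt this through an \(L^2\) bound, using that the noises for distinct \(\ell\) are independent and enter with weight \(1/N\), so that off-diagonal covariances are \(O(1/N)\) by a further application of the same hierarchy. This is the step I expect to be the hardest, and it is why no explicit rate should be expected.
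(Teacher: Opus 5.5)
Your Steps 1--2 are correct: linearity, CPTP local flows, the partial-trace hierarchy and the Duhamel formula. The short-time iteration and restart structure is also sound. The gap is exactly your item (iii), which you leave as an attempt, and the attempt does not close as formulated.

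First, you claim that \(\E[\rho_s^{N:[n+k]\cup\{\ell\}}\mid\cF^{[n+k]}]\) ``matches the limit.'' This is unjustified. It is a correlated \((n+k+1)\)-particle operator, and nothing you show identifies it, even approximately, with \(\Gamma_s^{n+k}\otimes\xi_s\). Projecting onto the interior filtration does not close either: the projected equation is again coupled to projected \((n+k+2)\)-marginals.

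Second, an \(L^2\) bound on \(\frac1N\sum_\ell(X_\ell-\E[X_\ell\mid\cF^{[n+k]}])\) for the true marginals \(X_\ell=\rho_s^{N:[n+k]\cup\{\ell\}}\) needs small conditional covariances between \(X_\ell\) and \(X_{\ell'}\), \(\ell\neq\ell'\). These are coupled through the interaction, so this is a decorrelation statement as strong as the theorem at a higher level. ``A further application of the same hierarchy'' is circular unless you set up an induction you have not described.

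Within your Dyson framework, the conditional-independence idea does work, but only for the explicit truncated-expansion terms. Each such term is a fixed functional of \(W^{\cdot,[n]},W^{\cdot,\ell_1},\dots,W^{\cdot,\ell_k}\) applied to deterministic data. Hence terms with disjoint exterior index sets are conditionally i.i.d. given the interior noise, and the resulting U-statistic has conditional variance \(O(k^2/N)\). The limit terms must then be rewritten through \(\xi_s=\E[\gamma_s^\ell]\) as conditional expectations of the same functionals.

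The idea you are missing, which the paper uses, is to couple the exterior slot synchronously as well. Compare \(\rho_s^{N:[n]\cup\{\ell\}}\) not with anything involving \(\xi_s\), but with \(\Gamma_s^{n,\ell}=\gamma_s^1\otimes\cdots\otimes\gamma_s^n\otimes\gamma_s^\ell\), where \(\gamma^\ell\) is the limiting filter driven by the same \(W^{\cdot,\ell}\). Then
\[
  \overline\rho_s^{N:n+1}-\Gamma_s^n\otimes\xi_s
  =\frac1{N-n}\sum_{\ell>n}\bigl(\rho_s^{N:[n]\cup\{\ell\}}-\Gamma_s^{n,\ell}\bigr)
  +\Gamma_s^n\otimes\Bigl(\frac1{N-n}\sum_{\ell>n}\gamma_s^\ell-\xi_s\Bigr).
\]
Exchangeability of \((\rho^N,\gamma^1,\dots,\gamma^N)\) gives each summand of the first sum expected trace norm \(F_{n+1}^N(s)\); this is where permutation invariance of \(\rho_0^N\) enters. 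The second term is an i.i.d. law of large numbers of order \((N-n)^{-1/2}\). So the exterior noises are absorbed into the error at level \(n+1\) rather than averaged out.

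Combined with trace-norm contractivity of the local flow in the variation-of-constants formula, this gives the closed scalar hierarchy \(F_n^N(t)\le\delta_N^n+Cn\int_0^tF_{n+1}^N(s)\,\d s+\varepsilon_{N,n}\). This is iterated \(K\) times using the trivial bound \(F\le2\). The restart then only needs \(F_{n+r}^N(a)\to0\) for each fixed \(r\), which is the induction hypothesis. No random comparison states inside an expansion are required.
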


\begin{remark}
In the skew-adjoint case, the innovation coefficients $\cE_{L_j^{(k)}}$ become linear and the
exterior noises disappear from the marginal equations.  A stochastic BBGKY
argument can therefore propagate the weaker marginal chaoticity assumption
\eqref{eq:weak-tensorization}.
\end{remark}

\subsection{Comparison with closed systems and Kolokoltsov's convergence result}
\label{subsec:difficulty-and-open-problem}

When all measurement operators vanish, equation \eqref{eq:N-Belavkin}
reduces to the deterministic von Neumann equation for a closed quantum system:
\[
  \i\partial_t\rho_t^N=[H^N,\rho_t^N].
\]
In this closed setting, and in particular for infinite-dimensional
Schr\"odinger dynamics with \(\HH=L^2(\mathbb R^d;\mathbb C)\), propagation
of chaos and the derivation of Hartree equations have been established by
several complementary methods: deviation estimates,  e.g. 
Knowles--Pickl \citep{knowlespickl2010} and Pickl \citep{pickl2011}; BBGKY
hierarchies,  e.g.  Spohn \citep{spohn1980} and
Bardos--Golse--Mauser \citep{bardos2000}; Egorov-type theorems,  e.g.  
Fr\"ohlich--Graffi--Schwarz \citep{frohlich2007}; expansions of the
Heisenberg dynamics,  e.g.   Ammari--Nier \citep{Ammari2009}; quantum
empirical-measure techniques,  e.g.  
Golse--Paul \citep{golsepaul2017,golsepaul2019}; and quantum relative
entropy,  e.g.  Guo--Liang--Wang \citep{Guo2026}.  Although these
approaches are quite different, they exploit two structural features of closed
systems.  First, the \(N\)-particle evolution is linear and unitary.  Second,
after taking a partial trace, the deterministic BBGKY hierarchy is triangular:
the equation for an \(n\)-particle marginal is coupled to the
\((n+1)\)-particle marginal only through the interaction term.

Continuous indirect measurements remove both simplifying features.
\begin{itemize}
\item The Belavkin equation is nonlinear and stochastic, and its stochastic
integrals are attached to the individual observation records.  In particular,
after tracing out particles, the Brownian motions corresponding to the traced
particles do not disappear.  This is the main reason why the closed-system
methods do not extend by a routine perturbation argument.  The BBGKY hierarchy
is no longer a single deterministic triangular hierarchy: its coefficients
involve labeled higher-order marginals and exterior observation channels.

\item In the Heisenberg picture, testing against a local observable transforms
the same terms into covariances with remote measured coordinates.  The
evolution of these covariances involves higher centered correlations.

\item Quantum empirical-measure methods reorganize this cascade, but they do
not close it at a fixed order.  Passing to the unnormalized equation, namely
the linear Zakai equation, removes the explicit normalization, but the exterior
observation records still carry information about the traced particles.

\item Entropy estimates typically require faithfulness or uniform absolute-continuity
conditions, which are not natural for the states considered here
and are not stable under the pure-state regimes relevant to the counting
approach.
\end{itemize}

There is a specific obstruction to the wave-function counting method.  If
\(
  p=|\psi\rangle\langle\psi|\in\cS(\HH)
\)
is a rank-one projection,  then
\begin{equation}
  \|\rho-p\|_1^2
  \le
  4\big(1-\Tr(\rho p)\big),
  \qquad \rho\in\cS(\HH).
  \label{eq:pure-state-trace-bound}
\end{equation}
This inequality allows one to control trace-norm errors through a scalar,
linear counting functional measuring the number of particles outside a
prescribed one-particle wave function.  It is precisely this pure-state
structure which underlies the wave-function approach.

Kolokoltsov extended the analysis of propagation of chaos to the stochastic setting via this counting functional in
\citep{kolokoltsov2021,kolokoltsov2022} under the assumptions
\[
  \operatorname{rank}(\rho_0^N)=1
  \qquad \mbox{and}\qquad 
  \eta=1,
\]
where  \(\eta=1\) ensures that $\operatorname{rank}(\rho_t^N)=1$ for all $t>0$ if $ \operatorname{rank}(\rho_0^N)=1$.   By contrast, when
\(0<\eta<1\), part of the environmental information is unobserved, and even a
pure initial state typically evolves into a mixed conditional state.  The
wave-function representation is then no longer available, and propagation of
chaos for general density matrices remained open, to the best of our knowledge.
Theorem~\ref{thm:main-approx} gives an affirmative answer for every efficiency
\(\eta\in(0,1]\) and arbitrary mixed one-particle initial data under the strong
tensorization assumption \eqref{eq:strong-tensorization}.

The proof uses three reductions which may also be useful for interacting
stochastic equations on density matrices.  First, at perfect efficiency
\(\eta=1\), mixed one-particle states are handled by purification: a mixed
state is realized as the partial trace of a pure state on an enlarged Hilbert
space,  a standard idea in quantum information theory, see e.g. Nielsen--Chuang
\cite{nielsenchuang2010} and Watrous \cite{watrous2018}.  In the present
many-particle setting, this requires lifting the Hamiltonian, interaction and
measurement operators and checking that the lifted Belavkin dynamics traces
down exactly to the original one.  Second, inefficient measurements are treated
by a fully observed dilation, followed by conditioning on the actually observed
channels and a relative-entropy argument which restores conditional
factorization.  Third, strong perturbations of the initial \(N\)-particle
state are controlled through the  Zakai equation, whose positivity and
trace preservation in expectation give an \(N\)-uniform stability estimate.
These ingredients avoid both the pure-state restriction and the closure
difficulties caused by exterior observation noises.

Finally, it is worth noting that an important exceptional regime occurs when
all measurement operators are skew-adjoint,  i.e.,
\[
  (L^{(k)})^*=-L^{(k)},
  \qquad
  k=1,\ldots,m.
\]
Hence the exterior Brownian motions disappear from the marginal equation.  A
stochastic BBGKY argument then becomes available.  

\medskip

For the sake of presentation, we shall use the following shorthand assumptions
to organize the proofs.

\begin{hypothesis}\label{hyp:perfect-efficiency-tensorization-purity}
The following three reductions will be used below.
\begin{enumerate}
\item Perfect tensorization \(\rho_0^N=\gamma_0^{\otimes N}\), perfect
efficiency \(\eta=1\), and quantum purity
\(\operatorname{rank}(\gamma_0)=1\).

\item Perfect tensorization \(\rho_0^N=\gamma_0^{\otimes N}\) and perfect
efficiency \(\eta=1\).

\item Perfect tensorization \(\rho_0^N=\gamma_0^{\otimes N}\).
\end{enumerate}
\end{hypothesis}

\section[Perfect efficiency and perfect tensorization]{Propagation of chaos under Hypothesis ~\ref{hyp:perfect-efficiency-tensorization-purity} (2): purification reduction}
\label{sec:purification}

Let Hypothesis~\ref{hyp:perfect-efficiency-tensorization-purity} (2) hold throughout Section \ref{sec:purification},  namely,  
\[
\rho_0^N=\gamma_0^{\otimes N} \qquad \mbox{and}\qquad  \eta=1.
\]
The goal of this section is to show the propagation of chaos using a purification reduction,  as summarized in Proposition \ref{prop:main-exact-eta-one}\footnote{The propagation of chaos result under Hypothesis~\ref{hyp:perfect-efficiency-tensorization-purity} (2) can be extended to infinite-dimensional Hilbert spaces $\HH$.} below.

\begin{proposition}
\label{prop:main-exact-eta-one}
Let Hypothesis~\ref{hyp:perfect-efficiency-tensorization-purity} (2) hold with
$\gamma_0\in\cS(\HH)$.  
Then, for every fixed $n\ge1$ and $T>0$, there exists 
$C_{T,n}>0$, independent of $N$, such that
\begin{equation}
  \sup_{0\le t\le T}
  \E\left[
    \left\|
      \rho_t^{N:n}-\Gamma_t^n
    \right\|_1
  \right]
  \le
  C_{T,n}N^{-1/4}.
  \label{eq:eta-one-mixed-rate}
\end{equation}
\end{proposition}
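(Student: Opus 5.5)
The proof reduces the mixed-state case to the pure-state case (Hypothesis~\ref{hyp:perfect-efficiency-tensorization-purity} (1)) by purification, and then invokes a pure-state propagation estimate with rate $N^{-1/4}$. For that pure case I take the Kolokoltsov counting-functional result as the input: at $\eta=1$ with $\rho_0^N=\gamma_0^{\otimes N}$ pure, it gives $\sup_{t\le T}\E\|\rho_t^{N:n}-\Gamma_t^n\|_1\le C N^{-1/4}$. The exponent arises because the counting functional controls $1-\Tr(\rho p)$ at order $N^{-1/2}$, and \eqref{eq:pure-state-trace-bound} then yields a square root. If this rate is not available verbatim in finite dimension for the present Belavkin normalization, I would reprove it by a Gr\"onwall argument on $\E\,\Tr\bigl(\rho_t^N \sum_j q_j^t\bigr)/N$, where $q^t=I-|\psi_t^j\rangle\langle\psi_t^j|$ is built from the limiting wave functions. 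The fluctuation term $V^{\rho^{N:1}}-V^{\xi}$ is then controlled by the exchangeable empirical average, which gives the $N^{-1/2}$ bound.

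\emph{Purification.} I write $\gamma_0=\Tr_{\KK}|\phi_0\rangle\langle\phi_0|$ with $\phi_0\in\HH\otimes\KK$, where $\KK$ is an auxiliary space of dimension $\dim\HH$. On $\tilde\HH:=\HH\otimes\KK$ I set $\tilde h=h\otimes I_\KK$, $\tilde L^{(k)}=L^{(k)}\otimes I_\KK$, and $\tilde V=V\otimes I_{\KK\otimes\KK}$, with the factors reordered so that $\tilde V$ acts on $\HH_j\otimes\HH_\ell$. Since $\tilde V$ is still self-adjoint and exchange-symmetric, the lifted $N$-particle Belavkin equation on $\tilde\HH^{\otimes N}$ with initial state $|\phi_0\rangle\langle\phi_0|^{\otimes N}$, driven by the same Brownian motions, falls under Hypothesis~\ref{hyp:perfect-efficiency-tensorization-purity} (1).

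The key algebraic step is to check that $\tilde\rho_t^N$ traces down to the original equation. Every lifted operator acts as the identity on the $\KK$ factors, so the partial trace $\Tr_{\KK^{\otimes N}}$ commutes with $[\tilde H^N,\cdot\,]$, with $\cD_{\tilde L_j}$ and with $\cC_{\tilde L_j}$. Moreover $\Tr\bigl((\tilde L+\tilde L^*)_j\tilde\rho\bigr)=\Tr\bigl((L+L^*)_j\Tr_{\KK^{\otimes N}}\tilde\rho\bigr)$, so the same holds for $\cE_{\tilde L_j}$. Hence $\Tr_{\KK^{\otimes N}}\tilde\rho_t^N$ solves \eqref{eq:N-Belavkin} with initial datum $\gamma_0^{\otimes N}$ and the same noise, and pathwise uniqueness identifies it with $\rho_t^N$.

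The same computation applies at the limiting level, and it is where some care is needed. $V^{\tilde\xi}$ computed with $\tilde V$ equals $V^{\Tr_\KK\tilde\xi}\otimes I_\KK$, and expectation commutes with the partial trace, so $\Tr_\KK\tilde\gamma_t^j$ solves \eqref{eq:MF-Belavkin} with the same mean field $\xi_t$. Well-posedness from \cite{Amini2025} then gives $\Tr_\KK\tilde\gamma^j_t=\gamma_t^j$.

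To conclude, partial traces commute, so $\rho_t^{N:n}=\Tr_{\KK^{\otimes n}}\tilde\rho_t^{N:n}$ and $\Gamma_t^n=\Tr_{\KK^{\otimes n}}\tilde\Gamma_t^n$. Since the partial trace is a contraction in trace norm, $\|\rho_t^{N:n}-\Gamma_t^n\|_1\le\|\tilde\rho_t^{N:n}-\tilde\Gamma_t^n\|_1$, and the pure-state estimate yields \eqref{eq:eta-one-mixed-rate}. The constant depends on the lifted operators, whose norms equal those of the originals, and on $\dim\tilde\HH=(\dim\HH)^2$, which is fixed.

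The main obstacle is the pure-state rate itself, specifically obtaining $N^{-1/4}$ uniformly in $t\le T$ and in the form stated. If I must reprove it, the hardest point is the stochastic part: showing that the counting functional's martingale and innovation terms produce no $O(1)$ drift. I would first try to use exchangeability together with the independence of the $W^{k,j}$, so that the cross-variation terms average out at order $1/N$.
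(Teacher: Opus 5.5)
Your proposal is correct and follows the paper's proof essentially step for step. The shared steps are purification of $\gamma_0$, lifting $h$, $L^{(k)}$, $V$ to act trivially on the auxiliary factors, checking that $\Tr_{\KK^{\otimes N}}$ intertwines the lifted and physical Belavkin coefficients (including $\widetilde V^{\widetilde\sigma}=V^{\sigma}\otimes I_{\KK}$ at the mean-field level), identifying the traced-down processes by uniqueness, and concluding by trace-norm contractivity of the partial trace together with Kolokoltsov's pure-state estimate. The one step you leave implicit in the pure-state input is passing from the one-particle counting functionals to the $n$-marginal. The paper handles it in Theorem~\ref{thm:pure-input} via the commuting-projection bound $I_{\HH_N}-\prod_{j\le n}P_{j,t}\le\sum_{j\le n}(I_{\HH_N}-P_{j,t})$, followed by \eqref{eq:pure-state-trace-bound} and Jensen, so your fallback Gr\"onwall reproof is not needed.
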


The argument is a transfer principle.  Propagation of chaos is already known
when $\gamma_0$ is pure.  For a mixed $\gamma_0$, we represent it as the
partial trace of a pure state on an enlarged one-particle Hilbert space.  The
Hamiltonian, interaction and measurement operators are lifted by letting them
act trivially on the auxiliary factor.  The lifted dynamics is then a
perfectly observed pure-state dynamics, so the known theorem of Kolokoltsov \citep{kolokoltsov2021} applies.  The
physical mixed dynamics is recovered exactly by partial trace, and the desired
estimate follows from trace-norm contractivity.

We first record the convergence result of Kolokoltsov's pure-state
propagation theorem in the notation of the present paper,  see \citep{kolokoltsov2021} for related details.

\begin{theorem}[Kolokoltsov, 2021]
\label{thm:pure-input}
Let Hypothesis~\ref{hyp:perfect-efficiency-tensorization-purity} (1) hold.  
For $j\in[N]$, set
\begin{equation}
\rho_t^{N:(j)}
  :=
  \Tr_{[N]\setminus\{j\}}(\rho_t^N) \qquad \mbox{ and } \qquad  \alpha_{N,j}(t)
  :=
  1-\Tr\bigl(\rho_t^{N:(j)}\gamma_t^j\bigr).
  \label{eq:pure-counting-functional}
\end{equation}
Then, for every $T>0$, there exists a constant $C_T<\infty$, independent
of $N$ and $j$, such that
\begin{equation}
  \sup_{0\le t\le T}
  \E\bigl[\alpha_{N,j}(t)\bigr]
  \le
  \frac{C_T}{\sqrt N}.
  \label{eq:pure-counting-rate}
\end{equation}
Consequently, for every fixed $n\ge1$,
\begin{equation}
  \sup_{0\le t\le T}
  \E\left[
    \left\|
      \rho_t^{N:n}-\Gamma_t^n
    \right\|_1
  \right]
  \le
  2\sqrt{nC_T}\,N^{-1/4}.
  \label{eq:pure-trace-norm-rate}
\end{equation}
\end{theorem}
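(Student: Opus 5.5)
The counting estimate \eqref{eq:pure-counting-rate} is the content of Kolokoltsov's theorem \citep{kolokoltsov2021}, and I would import it, only translating notation. Under Hypothesis~\ref{hyp:perfect-efficiency-tensorization-purity} (1), both $\rho_t^N$ and every $\gamma_t^j$ remain rank one, because $\eta=1$ and the initial data are pure. The $N$-particle wave function $\Psi^N_t$ and the limiting wave functions $\psi_t^j$ are driven by the same Brownian motions $W^{k,j}$. So $\alpha_{N,j}(t)=1-\langle \Psi_t^N,(|\psi_t^j\rangle\langle\psi_t^j|)_j\Psi_t^N\rangle$ is Kolokoltsov's counting functional for particle $j$.

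For orientation, his argument runs as follows.
\begin{itemize}
\item One applies Itô's formula to $\alpha_{N,j}$. Expectations kill the martingale terms.
\item The diagonal noise contributions cancel to leading order, because the $j$-th channel acts identically on $\rho^N$ and on $\gamma^j$.
\item The mean-field commutator $\frac1N\sum_{\ell}V_{j\ell}-V^{\xi_t}_j$ produces a fluctuation term. This term is bounded by $C\,\E[\alpha]$ plus a law-of-large-numbers error, using exchangeability and the independence of the $\gamma^\ell$.
\item Gronwall then gives $C_T/\sqrt N$, uniformly in $j$ by symmetry.
\end{itemize}
The genuinely hard step is this Itô computation. I would not redo it; I take it as the cited input.

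The new part is the ``consequently'' statement \eqref{eq:pure-trace-norm-rate}, which I would derive in three steps.

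\emph{Step 1: apply the pure-state trace bound.} Fix $t$ and set $p_j:=\gamma_t^j$, which is a rank-one projection. Then $P:=\Gamma_t^n=p_1\otimes\cdots\otimes p_n$ is a rank-one projection on $\HH_n$. Inequality \eqref{eq:pure-state-trace-bound} on $\HH_n$ gives
\[
\|\rho_t^{N:n}-\Gamma_t^n\|_1^2\le 4\bigl(1-\Tr(\rho_t^{N:n}P)\bigr).
\]

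\emph{Step 2: a union bound for commuting projections.} The lifts $(p_j)_j$, $j\le n$, commute, and $P=\prod_j (p_j)_j$. This gives the operator inequality
\[
I-P\le\sum_{j=1}^n\bigl(I-(p_j)_j\bigr),
\]
which one checks on the joint eigenbasis, where it reduces to $1-\prod x_j\le\sum(1-x_j)$ for $x_j\in\{0,1\}$. Testing against the positive operator $\rho_t^{N:n}$ and using the duality of the partial trace yields
\[
1-\Tr(\rho_t^{N:n}P)\le\sum_{j=1}^n\bigl(1-\Tr(\rho_t^{N:(j)}p_j)\bigr)=\sum_{j=1}^n\alpha_{N,j}(t).
\]

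\emph{Step 3: take expectations.} By Jensen's inequality and \eqref{eq:pure-counting-rate},
\[
\E\|\rho_t^{N:n}-\Gamma_t^n\|_1\le\bigl(\E\|\rho_t^{N:n}-\Gamma_t^n\|_1^2\bigr)^{1/2}\le 2\Bigl(\sum_{j=1}^n\E[\alpha_{N,j}(t)]\Bigr)^{1/2}\le 2\sqrt{nC_T}\,N^{-1/4}.
\]
Taking the supremum over $t\in[0,T]$ gives \eqref{eq:pure-trace-norm-rate}.

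The only point requiring care is that purity of $\gamma_t^j$ holds for all $t$. Step~1 needs $p_j$ to be a projection, so this is essential. It follows from the rank-one preservation of \eqref{eq:MF-Belavkin} at $\eta=1$: the nonlinear stochastic Schrödinger equation for $\psi^j_t$ with the deterministic mean field $V^{\xi_t}$ reproduces $\gamma^j_t$, and pathwise uniqueness of \eqref{eq:MF-Belavkin} identifies the two.
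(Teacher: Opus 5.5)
Your proposal is correct and matches the paper's proof essentially step for step. Like the paper, you import the counting estimate from Kolokoltsov and then deduce the trace-norm rate from the pure-state trace bound, the operator union bound $I-\prod_j P_j\le\sum_j(I-P_j)$ for commuting projections, and Jensen's inequality. The only cosmetic difference is that you lift the projections to $\HH_n$ rather than to $\HH_N$, which changes nothing.
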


\begin{proof}
Estimate \eqref{eq:pure-counting-rate} is the finite-dimensional,
uncontrolled specialization of Kolokoltsov
\cite[Theorem~3.1]{kolokoltsov2021}.  The interaction \(V\) is symmetric by
assumption, and all coefficients are bounded in the present finite-dimensional
setting.  The same reference observes that the argument is unchanged for
\(m\ge1\) measurement operators
\(
  L^{(1)},\ldots,L^{(m)} .
\)

It remains to deduce \eqref{eq:pure-trace-norm-rate}.  Since $\eta=1$ and
$\gamma_0$ is pure, every $\gamma_t^j$ remains a rank-one projection.
For $j\le n$, denote by $P_{j,t}$ the lift of $\gamma_t^j$ to the
$j$-th factor of $\HH_N$.  The projections $P_{1,t},\ldots,P_{n,t}$
commute, and hence
\begin{equation}
  I_{\HH_N}
  -
  \prod_{j=1}^nP_{j,t}
  \le
  \sum_{j=1}^n
  \bigl(I_{\HH_N}-P_{j,t}\bigr),
  \label{eq:commuting-projection-bound}
\end{equation}
which yields
\begin{align}\label{eq:product-counting-bound}
  1-\Tr\bigl(\rho_t^{N:n}\Gamma_t^n\bigr)
  =
  \Tr\left(
    \rho_t^N
    \left(
      I_{\HH_N}-\prod_{j=1}^nP_{j,t}
    \right)
  \right)
 \le
  \sum_{j=1}^n
  \Tr\left(
    \rho_t^N
    \bigl(I_{\HH_N}-P_{j,t}\bigr)
  \right)=
  \sum_{j=1}^n\alpha_{N,j}(t).
\end{align}
As the product $\Gamma_t^n$ is also pure,  it follows that 
\[
  \left\|
    \rho_t^{N:n}-\Gamma_t^n
  \right\|_1
  \le
  2\sqrt{
    1-\Tr\bigl(\rho_t^{N:n}\Gamma_t^n\bigr)
  }
  \le
  2\left(
    \sum_{j=1}^n\alpha_{N,j}(t)
  \right)^{1/2}.
\]
Taking expectations, using Jensen's inequality and then
\eqref{eq:pure-counting-rate}, we obtain
\[
\begin{aligned}
  \E\left[
    \left\|
      \rho_t^{N:n}-\Gamma_t^n
    \right\|_1
  \right]
  \le
  2
  \left(
    \sum_{j=1}^n
    \E[\alpha_{N,j}(t)]
  \right)^{1/2}
  \le
  2\sqrt{nC_T}\,N^{-1/4}.
\end{aligned}
\]
Taking the supremum over $t\in[0,T]$ proves
\eqref{eq:pure-trace-norm-rate}.
\end{proof}

\begin{remark}
The rate $N^{-1/4}$ arises from the combination of the
$N^{-1/2}$ estimate for the counting functional and the square-root
comparison between that functional and the trace norm.  Optimality of this trace-norm rate remains open.  
\end{remark}

Next,  we adopt the purification approach and verify the lifted dynamics to remove the quantum purity assumption on $\gamma_0$.

\begin{lemma}
\label{lem:purification}
For every $\gamma_0\in\cS(\HH)$, there exist a finite-dimensional
auxiliary Hilbert space $\KK$, with
\(
  \dim(\KK)\le\dim(\HH),
\)
and a unit vector
\(
  \Psi_0\in\widetilde\HH:=\HH\otimes\KK
\)
such that, with
\(
  \widetilde\gamma_0
  :=
  |\Psi_0\rangle\langle\Psi_0|,
\)
one has
\begin{equation}
  \Tr_{\KK}(\widetilde\gamma_0)
  =
  \gamma_0.
  \label{eq:initial-purification}
\end{equation}
\end{lemma}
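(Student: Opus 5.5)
The plan is to use the standard Schmidt-type purification built from the spectral decomposition of \(\gamma_0\). Write \(r:=\operatorname{rank}(\gamma_0)\le\dim(\HH)\) and take the spectral representation
\[
  \gamma_0=\sum_{s=1}^{r}\lambda_s|\psi_s\rangle\langle\psi_s|,
  \qquad \lambda_s>0,\qquad \sum_{s=1}^r\lambda_s=1,
\]
with \(\{\psi_s\}\) orthonormal in \(\HH\). This decomposition was recalled in Subsection~\ref{subsec:states-and-composite-systems}. Let \(\KK:=\mathbb C^{r}\) with orthonormal basis \(e_1,\ldots,e_r\), so that \(\dim(\KK)=r\le\dim(\HH)\). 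Then set
\[
  \Psi_0:=\sum_{s=1}^{r}\sqrt{\lambda_s}\,\psi_s\otimes e_s\in\HH\otimes\KK .
\]
Orthonormality of the families \(\{\psi_s\otimes e_s\}\) gives \(\|\Psi_0\|^2=\sum_s\lambda_s=1\), so \(\widetilde\gamma_0=|\Psi_0\rangle\langle\Psi_0|\) is a pure state on \(\widetilde\HH\).

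It remains to check \eqref{eq:initial-purification}. Expanding the projection gives
\[
  \widetilde\gamma_0=\sum_{s,s'=1}^{r}\sqrt{\lambda_s\lambda_{s'}}\,|\psi_s\rangle\langle\psi_{s'}|\otimes|e_s\rangle\langle e_{s'}| .
\]
We then apply the partial trace over \(\KK\), using \(\Tr(|e_s\rangle\langle e_{s'}|)=\delta_{ss'}\). This leaves \(\sum_s\lambda_s|\psi_s\rangle\langle\psi_s|=\gamma_0\). To stay faithful to the paper's definition, we can instead verify the duality relation: for every \(A\in\cL(\HH)\),
\[
  \Tr\bigl((A\otimes I_\KK)\widetilde\gamma_0\bigr)=\langle\Psi_0,(A\otimes I_\KK)\Psi_0\rangle=\sum_s\lambda_s\langle\psi_s,A\psi_s\rangle=\Tr(A\gamma_0).
\]
This characterizes \(\Tr_\KK(\widetilde\gamma_0)=\gamma_0\).

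There is no substantive obstacle here; the only point needing care is the bookkeeping of the partial trace convention. If \(\gamma_0\) is already pure, the construction gives \(r=1\) and \(\KK=\mathbb C\), consistent with the empty-tensor convention. If a fixed auxiliary space is preferable later, for instance \(\KK=\HH\) so that the lifted system has a uniform dimension, we can take \(\KK=\HH\) and \(e_s:=\psi_s\), extended to an orthonormal basis. The same computation goes through, and the bound \(\dim(\KK)\le\dim(\HH)\) still holds with equality.
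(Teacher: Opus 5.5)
Your proposal is correct and follows the paper's proof essentially verbatim: the spectral decomposition of \(\gamma_0\), an auxiliary space \(\KK\) of dimension \(\operatorname{rank}(\gamma_0)\le\dim(\HH)\), the vector \(\Psi_0=\sum_s\sqrt{\lambda_s}\,\psi_s\otimes e_s\), and expansion of the projection followed by the partial trace over \(\KK\). Your additional check through the duality relation \(\Tr\bigl((A\otimes I_\KK)\widetilde\gamma_0\bigr)=\Tr(A\gamma_0)\), and the variant with \(\KK=\HH\), are both valid but not needed.
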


\begin{proof}
Let
\[
  \gamma_0
  =
  \sum_{r=1}^{r_0}
  \lambda_r|u_r\rangle\langle u_r|,
  \qquad \mbox{ with }
  r_0:=\operatorname{rank}(\gamma_0)
\]
be the spectral decomposition of $\gamma_0$, where $ (u_r)_{1\le r\le r_0}\subset \HH$ are orthogonal unit vectors and
\[
\lambda_r>0,
  \qquad
  \sum_{r=1}^{r_0}\lambda_r=1.
\]
Let $\KK$ be an $r_0$-dimensional Hilbert space with orthonormal basis
$(e_r)_{r=1}^{r_0}$.  Define
\[
  \Psi_0
  :=
  \sum_{r=1}^{r_0}
  \sqrt{\lambda_r}\,u_r\otimes e_r.
\]
Then $\|\Psi_0\|_{\widetilde\HH}=1$, and a straightforward computation yields
\[
  \widetilde\gamma_0
  =
  \sum_{r,s=1}^{r_0}
  \sqrt{\lambda_r\lambda_s}\,
  |u_r\rangle\langle u_s|
  \otimes
  |e_r\rangle\langle e_s|.
\]
Taking the partial trace over $\KK$ gives
\[
\begin{aligned}
  \Tr_{\KK}(\widetilde\gamma_0)
  &=
  \sum_{r,s=1}^{r_0}
  \sqrt{\lambda_r\lambda_s}\,
  |u_r\rangle\langle u_s|
  \langle e_s,e_r\rangle_{\KK}
  =
  \sum_{r=1}^{r_0}
  \lambda_r|u_r\rangle\langle u_r|
  =
  \gamma_0.
\end{aligned}
\]
This proves the lemma.
\end{proof}

Set
\(
  \widetilde\HH:=\HH\otimes\KK
\)
and define the lifted one-particle operators by
\begin{equation}
  \widetilde h
  :=
  h\otimes I_{\KK},
  \qquad
  \widetilde L^{(k)}
  :=
  L^{(k)}\otimes I_{\KK},
  \qquad
  k=1,\ldots,  m.
  \label{eq:lifted-one-particle-operators}
\end{equation}
Under the canonical identification
\(
  (\HH\otimes\KK)\otimes(\HH\otimes\KK)
  \simeq
  (\HH\otimes\HH)\otimes(\KK\otimes\KK),
\)
define
\begin{equation}
  \widetilde V
  :=
  V\otimes I_{\KK\otimes\KK}.
  \label{eq:lifted-interaction}
\end{equation}
Thus $\widetilde V$ acts as $V$ on the two physical factors and trivially
on the two auxiliary factors.  Define further 
\(
  \widetilde\HH_N
  :=
  \widetilde\HH^{\otimes N}
\)
and
\begin{equation}
  \widetilde H^N
  :=
  \sum_{j=1}^N\widetilde h_j
  +
  \frac1N
  \sum_{1\le j<\ell\le N}
  \widetilde V_{j\ell}.
  \label{eq:lifted-HN}
\end{equation}
Let
\[
  \mathbf T_N
  :=
  \Tr_{\KK^{\otimes N}}
  :
  \cL(\widetilde\HH_N)
  \longrightarrow
  \cL(\HH_N)
\]
denote the partial trace over all auxiliary factors.  The next lemma expresses the compatibility of the lifted coefficients with
the physical dynamics.

\begin{lemma}
\label{lem:trace-down}
For every $X\in\cL(\widetilde\HH_N)$,
$j\in[N]$ and $1\le k\le m$,
\begin{align}
  \mathbf T_N\left(
    -\i[\widetilde H^N,X]
  \right)
  &=
  -\i[H^N,\mathbf T_N(X)],
  \label{eq:trace-down-H}
  \\
  \mathbf T_N\left(
    \cD_{\widetilde L_j^{(k)}}(X)
  \right)
  &=
  \cD_{L_j^{(k)}}\bigl(\mathbf T_N(X)\bigr),
  \label{eq:trace-down-D}
  \\
  \mathbf T_N\left(
    \cC_{\widetilde L_j^{(k)}}(X)
  \right)
  &=
  \cC_{L_j^{(k)}}\bigl(\mathbf T_N(X)\bigr),
  \label{eq:trace-down-C}
  \\
  \mathbf T_N\left(
    \cE_{\widetilde L_j^{(k)}}(X)
  \right)
  &=
  \cE_{L_j^{(k)}}\bigl(\mathbf T_N(X)\bigr).
  \label{eq:trace-down-E}
\end{align}
Moreover, if
\(
  \widetilde\sigma\in\cS(\widetilde\HH)\) and \(
  \sigma:=\Tr_{\KK}(\widetilde\sigma),
\) 
then
\begin{equation}
  \widetilde V^{\widetilde\sigma}
  =
  V^\sigma\otimes I_{\KK}.
  \label{eq:trace-down-effective-potential}
\end{equation}
\end{lemma}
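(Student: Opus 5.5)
The whole lemma rests on one basic identity. For operators of the form $A\otimes I_{\KK^{\otimes N}}$, with $A\in\cL(\HH_N)$ and under the canonical reshuffling $\widetilde\HH_N\simeq\HH_N\otimes\KK^{\otimes N}$, the partial trace $\mathbf T_N$ is a bimodule map:
\[
  \mathbf T_N\bigl((A\otimes I)X(B\otimes I)\bigr)=A\,\mathbf T_N(X)\,B,
  \qquad A,B\in\cL(\HH_N),\ X\in\cL(\widetilde\HH_N).
\]
I would prove this from the duality characterization of the partial trace. For any test operator $Y$ on $\HH_N$, cyclicity of the full trace gives
\[
  \Tr\bigl(Y\,\mathbf T_N((A\otimes I)X(B\otimes I))\bigr)
  =\Tr\bigl((BYA\otimes I)X\bigr)
  =\Tr\bigl(BYA\,\mathbf T_N(X)\bigr)
  =\Tr\bigl(Y A\mathbf T_N(X)B\bigr).
\]
A second ingredient is the trace identity $\Tr(\mathbf T_N(X))=\Tr(X)$, which is the duality relation with $Y=I$.

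Next I would identify the lifted operators. After reshuffling, $\widetilde h_j=h_j\otimes I_{\KK^{\otimes N}}$, $\widetilde L^{(k)}_j=L^{(k)}_j\otimes I$, and $\widetilde V_{j\ell}=V_{j\ell}\otimes I$; the last follows from the definition \eqref{eq:lifted-interaction}. Consequently $\widetilde H^N=H^N\otimes I$, and also $(\widetilde L_j^{(k)})^*\widetilde L_j^{(k)}=(L_j^{(k)*}L_j^{(k)})\otimes I$.

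With these in hand, the first three identities follow by linearity from the bimodule identity.
\begin{itemize}
\item \eqref{eq:trace-down-H}: apply it to $\widetilde H^NX$ and to $X\widetilde H^N$.
\item \eqref{eq:trace-down-D}: apply it to $\widetilde LX\widetilde L^*$ and to the two anticommutator terms.
\item \eqref{eq:trace-down-C}: apply it to $\widetilde LX$ and to $X\widetilde L^*$.
\end{itemize}
For \eqref{eq:trace-down-E}, there is one extra step: the scalar factor must also match,
\[
  \Tr\bigl((\widetilde L+\widetilde L^*)X\bigr)=\Tr\bigl((L+L^*)\mathbf T_N(X)\bigr).
\]
This is the duality relation with $A=L_j^{(k)}+L_j^{(k)*}$. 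Once the scalars agree, \eqref{eq:trace-down-E} follows from \eqref{eq:trace-down-C} and linearity of $\mathbf T_N$.

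For \eqref{eq:trace-down-effective-potential}, I would test against an arbitrary $A\otimes B$ with $A\in\cL(\HH)$ and $B\in\cL(\KK)$; such products span $\cL(\widetilde\HH)$. By the definition \eqref{eq:effective-potential} and duality,
\[
  \Tr\bigl((A\otimes B)\widetilde V^{\widetilde\sigma}\bigr)
  =\Tr\bigl(\widetilde V\,\bigl((A\otimes B)\otimes\widetilde\sigma\bigr)\bigr).
\]
Since $\widetilde V=V\otimes I_{\KK\otimes\KK}$, I would then reshuffle to $(\HH\otimes\HH)\otimes(\KK\otimes\KK)$ and first trace out the second auxiliary factor. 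This replaces $\widetilde\sigma$ by $\sigma$ and isolates the factor $\Tr(B)$. The result is $\Tr(B)\,\Tr\bigl(V(A\otimes\sigma)\bigr)=\Tr(B)\,\Tr(AV^\sigma)$, which equals $\Tr\bigl((A\otimes B)(V^\sigma\otimes I_\KK)\bigr)$.

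I expect no serious obstacle in any of this. The only point needing care is this last computation: the bookkeeping of tensor-factor reorderings, and making sure the partial trace over the second copy of $\widetilde\HH$ splits into a trace over $\HH$ and a trace over $\KK$ in the right order.
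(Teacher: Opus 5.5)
Your proposal is correct and follows essentially the same route as the paper. The paper also reduces everything to the pull-through identities for $\mathbf T_N$ (using $\widetilde H^N=H^N\otimes I_{\KK^{\otimes N}}$ and $\widetilde L_j^{(k)}=L_j^{(k)}\otimes I_{\KK^{\otimes N}}$), matches the scalar $\Tr\bigl((\widetilde L_j^{(k)}+(\widetilde L_j^{(k)})^*)X\bigr)$ by duality, and proves \eqref{eq:trace-down-effective-potential} by testing against $A\otimes B$; your explicit duality derivation of the two-sided identity simply fills in a step the paper states without proof.
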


\begin{proof}
Under the canonical identification
\(
  \widetilde\HH_N
  \simeq
  \HH_N\otimes\KK^{\otimes N},
\)
the lifted Hamiltonian satisfies
\(
  \widetilde H^N
  =
  H^N\otimes I_{\KK^{\otimes N}}.
\)
For every $A\in\cL(\HH_N)$,
\begin{equation}
\begin{aligned}
  \mathbf T_N\left(
    (A\otimes I_{\KK^{\otimes N}})X
  \right)
  =
  A\mathbf T_N(X),\qquad 
  \mathbf T_N\left(
    X(A\otimes I_{\KK^{\otimes N}})
  \right)
  =
  \mathbf T_N(X)A.
\end{aligned}
  \label{eq:partial-trace-pull-through}
\end{equation}
Identity \eqref{eq:trace-down-H} follows immediately.

Since
\(
  \widetilde L_j^{(k)}
  =
  L_j^{(k)}\otimes I_{\KK^{\otimes N}},
\)
the same pull-through identities give
\[
\begin{aligned}
  \mathbf T_N\left(
    \widetilde L_j^{(k)}
    X
    (\widetilde L_j^{(k)})^*
  \right)
  &=
  L_j^{(k)}
  \mathbf T_N(X)
  (L_j^{(k)})^*,
  \\
  \mathbf T_N\left(
    (\widetilde L_j^{(k)})^*
    \widetilde L_j^{(k)}X
  \right)
  &=
  (L_j^{(k)})^*L_j^{(k)}
  \mathbf T_N(X),
  \\
  \mathbf T_N\left(
    X(\widetilde L_j^{(k)})^*
    \widetilde L_j^{(k)}
  \right)
  &=
  \mathbf T_N(X)
  (L_j^{(k)})^*L_j^{(k)}.
\end{aligned}
\]
This proves \eqref{eq:trace-down-D},  and
\eqref{eq:trace-down-C} follows in the same way.  Furthermore,
\[
\begin{aligned}
  \Tr\left(
    \bigl(
      \widetilde L_j^{(k)}
      +
      (\widetilde L_j^{(k)})^*
    \bigr)X
  \right)
  =
  \Tr\left(
    \bigl(
      L_j^{(k)}
      +
      (L_j^{(k)})^*
    \bigr)
    \mathbf T_N(X)
  \right).
\end{aligned}
\]
Combining this identity with \eqref{eq:trace-down-C} proves
\eqref{eq:trace-down-E}.

Finally, \eqref{eq:trace-down-effective-potential} follows from the
defining duality of the partial trace.  Indeed, for
$A\in\cL(\HH)$ and $B\in\cL(\KK)$,
\[
\begin{aligned}
  \Tr\left(
    (A\otimes B)
    \widetilde V^{\widetilde\sigma}
  \right)
  =
  \Tr(B)\,
  \Tr\left(
    (A\otimes\sigma)V
  \right)
  =
  \Tr(B)\,
  \Tr\left(
    AV^\sigma
  \right)
  =
  \Tr\left(
    (A\otimes B)
    (V^\sigma\otimes I_{\KK})
  \right).
\end{aligned}
\]
Operators of the form $A\otimes B$ span
$\cL(\widetilde\HH)$, which proves the identity.
\end{proof}

Let $\widetilde\rho^N$ solve the lifted $N$-particle Belavkin equation
\begin{equation}
\begin{aligned}
  \d\widetilde\rho_t^N
  &=
  -\i[
    \widetilde H^N,\widetilde\rho_t^N
  ]\,\d t
  +
  \sum_{k=1}^m\sum_{j=1}^N
  \cD_{\widetilde L_j^{(k)}}
  (\widetilde\rho_t^N)\,\d t
  +
  \sum_{k=1}^m\sum_{j=1}^N
  \cE_{\widetilde L_j^{(k)}}
  (\widetilde\rho_t^N)\,\d W_t^{k,j},
  \\
  \widetilde\rho_0^N
  &=
  \widetilde\gamma_0^{\otimes N}.
\end{aligned}
  \label{eq:lifted-N-eta-one}
\end{equation}
Let $\widetilde\gamma^j$ solve the lifted limiting equation
\begin{equation}
\begin{aligned}
  \d\widetilde\gamma_t^j
  &=
  \left(
    -\i[
      \widetilde h
      +
      \widetilde V^{\widetilde\xi_t},
      \widetilde\gamma_t^j
    ]
    +
    \sum_{k=1}^m
    \cD_{\widetilde L^{(k)}}
    (\widetilde\gamma_t^j)
  \right)\d t
  +
  \sum_{k=1}^m
  \cE_{\widetilde L^{(k)}}
  (\widetilde\gamma_t^j)\,\d W_t^{k,j},
  \\
  \widetilde\gamma_0^j
  &=
  \widetilde\gamma_0,
  \qquad
  \widetilde\xi_t
  :=
  \E[\widetilde\gamma_t^j].
\end{aligned}
  \label{eq:lifted-MF-eta-one}
\end{equation}
The initial states in both lifted equations \eqref{eq:lifted-N-eta-one} and \eqref{eq:lifted-MF-eta-one} are pure.  Since every output
channel is perfectly observed, the lifted equations preserve purity.  The next lemma shows that, after taking the partial trace, the lifted equations  reproduce the solutions to  \eqref{eq:N-Belavkin} and
\eqref{eq:MF-Belavkin}.   

\begin{lemma}
\label{lem:trace-down-dynamics}
The processes
\begin{equation}
  \rho_t^N
  :=
  \mathbf T_N(\widetilde\rho_t^N),
  \qquad
  \gamma_t^j
  :=
  \Tr_{\KK}(\widetilde\gamma_t^j)
  \label{eq:trace-down-processes}
\end{equation}
solve respectively \eqref{eq:N-Belavkin} and
\eqref{eq:MF-Belavkin} with $\eta=1$ and initial conditions
$\gamma_0^{\otimes N}$ and $\gamma_0$.
\end{lemma}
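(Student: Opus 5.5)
The plan is to apply the partial trace to the lifted equations and use Lemma~\ref{lem:trace-down} term by term. Pathwise uniqueness of \eqref{eq:N-Belavkin} and \eqref{eq:MF-Belavkin} then identifies the traced processes with the physical solutions. Throughout, both lifted and physical equations are driven by the same Brownian motions \(W^{k,j}\).

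\emph{Step 1: the \(N\)-particle equation.} The map \(\mathbf T_N\) is linear and finite-dimensional, so it commutes with Lebesgue and It\^o integrals. Applying it to the integral form of \eqref{eq:lifted-N-eta-one} gives
\[
  \rho_t^N
  =
  \mathbf T_N(\widetilde\gamma_0^{\otimes N})
  +\int_0^t \mathbf T_N\Bigl(-\i[\widetilde H^N,\widetilde\rho_s^N]+\sum_{k,j}\cD_{\widetilde L_j^{(k)}}(\widetilde\rho_s^N)\Bigr)\d s
  +\sum_{k,j}\int_0^t\mathbf T_N\bigl(\cE_{\widetilde L_j^{(k)}}(\widetilde\rho_s^N)\bigr)\d W_s^{k,j}.
\]
By \eqref{eq:trace-down-H}, \eqref{eq:trace-down-D} and \eqref{eq:trace-down-E}, every integrand is the corresponding physical coefficient evaluated at \(\rho_s^N\). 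For the initial datum, the partial trace factorizes over tensor products, so \(\mathbf T_N(\widetilde\gamma_0^{\otimes N})=(\Tr_\KK\widetilde\gamma_0)^{\otimes N}=\gamma_0^{\otimes N}\) by \eqref{eq:initial-purification}. The process \(\rho^N\) is continuous, adapted and \(\cS(\HH_N)\)-valued, because \(\mathbf T_N\) is positive and trace preserving. Hence \(\rho^N\) is a strong solution of \eqref{eq:N-Belavkin} with \(\eta=1\), and uniqueness (Mirrahimi--van Handel) identifies it with the physical solution.

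\emph{Step 2: the limiting equation.} This is the step that needs care, because of the nonlinear mean-field term. Set \(\xi_t:=\Tr_\KK(\widetilde\xi_t)=\E[\gamma_t^j]\); the partial trace commutes with expectation since it is linear and all states are bounded. By \eqref{eq:trace-down-effective-potential}, \(\widetilde h+\widetilde V^{\widetilde\xi_t}=(h+V^{\xi_t})\otimes I_\KK\). The \(N=1\) version of the pull-through identity \eqref{eq:partial-trace-pull-through} then gives
\[
  \Tr_\KK[\widetilde h+\widetilde V^{\widetilde\xi_t},\widetilde\gamma_t^j]=[h+V^{\xi_t},\gamma_t^j].
\]
The dissipator and innovation terms trace down exactly as in Lemma~\ref{lem:trace-down} with \(N=1\). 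Consequently \(\gamma^j\) solves the linear-in-state SDE
\[
  \d\gamma=\Bigl(-\i[h+V^{\xi_t},\gamma]+\textstyle\sum_k\cD_{L^{(k)}}(\gamma)\Bigr)\d t+\textstyle\sum_k\cE_{L^{(k)}}(\gamma)\,\d W^{k,j},
\]
with the deterministic input \(\xi_t=\E[\gamma_t^j]\), and \(\gamma_0^j=\gamma_0\).

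The remaining point is the self-consistency condition. By construction \(\xi\) is exactly the mean of \(\gamma^j\), so the pair \((\gamma^j,\xi)\) satisfies \eqref{eq:MF-Belavkin}. The well-posedness result of Amini--Amini--Chalal--Guo then implies that \(\gamma^j\) coincides with the unique solution. Independence and identical distribution of \(\gamma^1,\gamma^2,\ldots\) transfer from the lifted processes, since each \(\gamma^j\) is a measurable functional of \(W^{\cdot,j}\) alone.
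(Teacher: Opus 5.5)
Your proposal is correct and follows the paper's proof essentially verbatim. You trace down the integral form of the lifted equations via Lemma~\ref{lem:trace-down}, identify $\Tr_\KK(\widetilde\xi_t)=\E[\gamma_t^j]$ so that $\widetilde V^{\widetilde\xi_t}=V^{\xi_t}\otimes I_\KK$, and conclude by pathwise uniqueness. (A minor wording point: the frozen-$\xi$ equation is not ``linear-in-state'', since $\cE_{L^{(k)}}$ contains the quadratic term $\Tr\bigl((L^{(k)}+(L^{(k)})^*)\gamma\bigr)\gamma$; nothing in your argument actually uses linearity.)
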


\begin{proof}
At time zero,
\[
\begin{aligned}
  \mathbf T_N
  \left(
    \widetilde\gamma_0^{\otimes N}
  \right)
  &=
  \left(
    \Tr_{\KK}(\widetilde\gamma_0)
  \right)^{\otimes N}
  =
  \gamma_0^{\otimes N}.
\end{aligned}
\]
Applying the bounded linear map $\mathbf T_N$ to the integral form of
\eqref{eq:lifted-N-eta-one} and using
Lemma~\ref{lem:trace-down} yields \eqref{eq:N-Belavkin} with
$\eta=1$.

For the limiting equation, set
\(
  \xi_t:=\E[\gamma_t^j].
\)
Linearity of expectation and partial trace gives
\[
  \Tr_{\KK}(\widetilde\xi_t)
  =\Tr_{\KK}\big( \E[\widetilde\gamma_t^j]\big)=
  \E\left[
    \Tr_{\KK}(\widetilde\gamma_t^j)
  \right]
  =  \E[\gamma_t^j]=
  \xi_t.
\]
Consequently, Lemma~\ref{lem:trace-down} gives
\[
  \widetilde V^{\widetilde\xi_t}
  =
  V^{\xi_t}\otimes I_{\KK}.
\]
Applying $\Tr_{\KK}$ to
\eqref{eq:lifted-MF-eta-one} therefore yields
\eqref{eq:MF-Belavkin} with $\eta=1$.  Pathwise uniqueness identifies
the trace-down processes with the solutions introduced in
Section~\ref{sec:framework}.
\end{proof}

We shall also use the following standard contractivity property.

\begin{lemma}
\label{lem:partial-trace-contraction}
For every $n\ge1$ and
$X\in\cL(\widetilde\HH^{\otimes n})$,
\begin{equation}
  \left\|
    \Tr_{\KK^{\otimes n}}(X)
  \right\|_1
  \le
  \|X\|_1.
  \label{eq:partial-trace-contraction}
\end{equation}
\end{lemma}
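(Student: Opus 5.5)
The plan is to prove \eqref{eq:partial-trace-contraction} by duality between the trace norm and the operator norm. We write $\widetilde\HH^{\otimes n}\simeq\HH_n\otimes\KK^{\otimes n}$ and use the variational formula
\[
  \|Y\|_1
  =
  \sup_{A\in\cL(\HH_n):\,\|A\|\le1}
  \bigl|\Tr(AY)\bigr|,
  \qquad Y\in\cL(\HH_n).
\]
This formula is valid in finite dimension: take the polar decomposition $Y=U|Y|$ and choose $A=U^*$ to attain the supremum, while the upper bound follows from $|\Tr(AY)|\le\|A\|\,\|Y\|_1$.

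Next, given $Y=\Tr_{\KK^{\otimes n}}(X)$, we apply the defining duality of the partial trace recalled in Section~\ref{sec:framework}, with the auxiliary factors playing the role of the traced-out variables:
\[
  \Tr\bigl(A\,\Tr_{\KK^{\otimes n}}(X)\bigr)
  =
  \Tr\bigl((A\otimes I_{\KK^{\otimes n}})X\bigr).
\]
Since $\|A\otimes I_{\KK^{\otimes n}}\|=\|A\|\le1$, the Hölder inequality $|\Tr(BX)|\le\|B\|\,\|X\|_1$ bounds the right-hand side by $\|X\|_1$. Taking the supremum over $A$ then yields the claim.

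The only point needing care is the Hölder inequality itself. We verify it from the singular value decomposition $X=\sum_i s_i|v_i\rangle\langle w_i|$, which gives
\[
  |\Tr(BX)|
  \le
  \sum_i s_i\,|\langle w_i,Bv_i\rangle|
  \le
  \|B\|\sum_i s_i
  =
  \|B\|\,\|X\|_1 .
\]
Nothing here is a genuine obstacle. The argument is standard and does not depend on $n$ or on the dimension of $\KK$.
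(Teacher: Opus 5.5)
Your proposal is correct and follows essentially the same route as the paper: trace-norm duality against operators with $\|A\|\le1$, the defining duality of the partial trace, and the identity $\|A\otimes I_{\KK^{\otimes n}}\|=\|A\|$. The only difference is that you also verify the variational formula (via polar decomposition) and the inequality $|\Tr(BX)|\le\|B\|\,\|X\|_1$ (via the singular value decomposition), which the paper takes as standard.
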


\begin{proof}
By trace duality,
\[
\begin{aligned}
  \left\|
    \Tr_{\KK^{\otimes n}}(X)
  \right\|_1
  =
  \sup_{A\in \cL(\HH^{\otimes n}):\, \|A\|\le1}
  \left|
    \Tr\left(
      A^*
      \Tr_{\KK^{\otimes n}}(X)
    \right)
  \right|=
  \sup_{A\in \cL(\HH^{\otimes n}):\,  \|A\|\le1}
  \left|
    \Tr\left(
      (A^*\otimes I_{\KK^{\otimes n}})X
    \right)
  \right|\le
  \|X\|_1,
\end{aligned}
\]
because
\(
  \|A\otimes I_{\KK^{\otimes n}}\|
  =
  \|A\|.
\)
\end{proof}

Now we collect all ingredients to prove Proposition~\ref{prop:main-exact-eta-one}.

\begin{proof}[Proof of Proposition~\ref{prop:main-exact-eta-one}]
For $1\le n\le N$, set
\[
  \widetilde\rho_t^{N:n}
  :=
  \Tr_{[N]\setminus[n]}(\widetilde\rho_t^N)
\qquad \mbox{ and }
\qquad 
  \widetilde\Gamma_t^n
  :=
  \widetilde\gamma_t^1
  \otimes\cdots\otimes
  \widetilde\gamma_t^n.
\]
Since $\widetilde\gamma_0$ is pure, Theorem~\ref{thm:pure-input},
applied on the enlarged one-particle space $\widetilde\HH$, gives a
constant $\widetilde C_{T,n}<\infty$, independent of $N$, such that
\begin{equation}
  \sup_{0\le t\le T}
  \E\left[
    \left\|
      \widetilde\rho_t^{N:n}
      -
      \widetilde\Gamma_t^n
    \right\|_1
  \right]
  \le
  \widetilde C_{T,n}N^{-1/4}.
  \label{eq:lifted-pure-chaos-rate}
\end{equation}
In view of Lemma~\ref{lem:trace-down-dynamics} and the associativity of partial
traces, it holds that
\(
  \rho_t^{N:n}
  =
  \Tr_{\KK^{\otimes n}}
  (\widetilde\rho_t^{N:n}).
\)
Moreover,
\[
\begin{aligned}
  \Gamma_t^n=
  \Tr_{\KK}(\widetilde\gamma_t^1)
  \otimes\cdots\otimes
  \Tr_{\KK}(\widetilde\gamma_t^n)=
  \Tr_{\KK^{\otimes n}}
  (\widetilde\Gamma_t^n),
\end{aligned}
\]
which gives
\[
  \rho_t^{N:n}-\Gamma_t^n
  =
  \Tr_{\KK^{\otimes n}}
  \left(
    \widetilde\rho_t^{N:n}
    -
    \widetilde\Gamma_t^n
  \right).
\]
Lemma~\ref{lem:partial-trace-contraction} therefore yields
\[
  \left\|
    \rho_t^{N:n}-\Gamma_t^n
  \right\|_1
  \le
  \left\|
    \widetilde\rho_t^{N:n}
    -
    \widetilde\Gamma_t^n
  \right\|_1,
\]
and proves \eqref{eq:eta-one-mixed-rate} by 
taking expectations and using
\eqref{eq:lifted-pure-chaos-rate}.
\end{proof}

\begin{remark}
The exact tensor-product assumption is used because
\[
  \gamma_0^{\otimes N}
  =
  \mathbf T_N
  \left(
    \widetilde\gamma_0^{\otimes N}
  \right),
\]
where $\widetilde\gamma_0^{\otimes N}$ is itself a pure tensor-product
state.  A general perturbation of $\gamma_0^{\otimes N}$ need not admit a
compatible pure tensor-product lifting.  Strong perturbations of the initial
condition are therefore treated separately in
Section~\ref{sec:initial-perturbation}.
\end{remark}

\section[Perfect tensorization: conditional and entropy approach]{Propagation of chaos under Hypothesis ~\ref{hyp:perfect-efficiency-tensorization-purity} (3):  conditional expectation and relative  entropy}
\label{sec:inefficient}

Let  $0<\eta<1$ and Hypothesis~\ref{hyp:perfect-efficiency-tensorization-purity} (3) hold in Section \ref{sec:inefficient},  namely, 
$$\rho^N_0=\gamma_0^{\otimes N}.$$
We aim to show the propagation of chaos using a conditional expectation argument,  as summarized in Proposition \ref{prop:main-exact-eta-less-one}.

\begin{proposition}
\label{prop:main-exact-eta-less-one}
Let Hypothesis ~\ref{hyp:perfect-efficiency-tensorization-purity} (3) hold with
$\gamma_0\in\cS(\HH)$ and $\eta\in (0,1)$.    
Then, for every fixed $n\ge1$ and $T>0$, there exists 
$C_{\eta, T,n}>0$, independent of $N$, such that
\begin{equation}
  \sup_{0\le t\le T}
  \E\left[
    \left\|
      \rho_t^{N:n}-\Gamma_t^n
    \right\|_1
  \right]
  \le C_{\eta,T,n}N^{-1/16}.
  \label{eq:eta-main-rate}
\end{equation}
\end{proposition}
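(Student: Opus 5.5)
We plan to reduce the inefficient case to the perfectly efficient case of Proposition~\ref{prop:main-exact-eta-one} by a \emph{fully observed dilation}. For each channel \(k\) and particle \(j\), we split the output into an observed part and an unobserved part. We replace \(L^{(k)}\) by the pair of operators \(\sqrt{\eta}L^{(k)}\) and \(\sqrt{1-\eta}L^{(k)}\), driven by independent Brownian motions \(W^{k,j}\) and \(B^{k,j}\). Since \(\cD_{\sqrt\eta L}+\cD_{\sqrt{1-\eta}L}=\cD_L\), the enlarged equation has the same drift as \eqref{eq:N-Belavkin} and perfect efficiency. Its solution \(\bar\rho^N_t\) is the conditional state given both records, with \(2mN\) channels. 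Proposition~\ref{prop:main-exact-eta-one} then applies to \(\bar\rho^N\) and to the analogous fully observed limiting particles \(\bar\gamma^j\) with \(2m\) channels. This gives \(\E\|\bar\rho^{N:n}_t-\bar\Gamma^n_t\|_1\le CN^{-1/4}\); the mean state \(\E\bar\gamma^j_t=\xi_t\) is the same Hartree solution \eqref{eq:mean-state-equation}.

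Next we recover the inefficient filters by conditioning on the observed records. Let \(\mathcal Y^N_t\) be the observation \(\sigma\)-field generated by all the observed innovation-type outputs \(Y^{k,j}\), with \(\d Y^{k,j}=\sqrt\eta\,\Tr((L^{(k)}+L^{(k)*})\rho^{N:(j)})\d t+\d W^{k,j}\). Standard filtering theory, using linearity of the Zakai equation and the tower property, should give \(\rho^N_t=\E[\bar\rho^N_t\mid\mathcal Y^N_t]\) when the dilation is realised on a space where the observed processes are the innovations. Similarly, \(\gamma^j_t=\E[\bar\gamma^j_t\mid\mathcal Y^{j}_t]\) with a one-particle observed \(\sigma\)-field. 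Conditional expectation commutes with partial trace and is trace-norm contractive. Hence
\[
\E\|\rho^{N:n}_t-\E[\bar\Gamma^n_t\mid\mathcal Y^N_t]\|_1\le CN^{-1/4}.
\]
It then remains to compare \(\E[\bar\Gamma^n_t\mid\mathcal Y^N_t]\) with \(\prod_j\E[\bar\gamma^j_t\mid\mathcal Y^j_t]=\Gamma^n_t\).

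This comparison is the main obstacle: approximate conditional factorization. Under the product reference law, where the \(N\) particles evolve as independent limiting filters, the observed paths are independent. Conditioning on all records then factorizes exactly, each \(\bar\gamma^j\) depending only on its own record. Under the true interacting law, however, the observed paths are only approximately independent. We would use a compensating Girsanov transform on path space to compute the relative entropy \(H(\Law(Y^N)\,|\,\Law(Y^{\otimes}))\). It equals \(\tfrac{\eta}{2}\sum_{j,k}\E\int_0^t|\Tr((L+L^*)(\rho^{N:(j)}_s-\gamma^j_s))|^2\d s\), with \(L=L^{(k)}\). This is bounded using the dilated estimate and conditional Jensen by something like \(N\cdot N^{-1/4}\).

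Rather than controlling the full entropy, we plan to use exchangeability and the subadditivity/chain rule of entropy. The entropy of the law of the first \(n\) records together with the information the other records carry about them is at most \((n/N)\) times the total, giving \(O(N^{-1/4})\) per block. Pinsker then gives a total variation error of order \(N^{-1/8}\) for the joint law of \((\bar\Gamma^n, Y^{1..n})\) against the conditioning on the remaining records. We then need to pass from total variation of laws to the \(L^1\) difference of conditional expectations of bounded matrix-valued variables. This step requires a further square root, for instance via the bound \(\E|\E[F\mid\mathcal G]-\E[F\mid\mathcal G']|\lesssim\sqrt{\mathrm{TV}}\) for bounded \(F\). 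This produces the rate \(N^{-1/16}\).

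Combining the three steps by the triangle inequality yields \eqref{eq:eta-main-rate}, with constants depending on \(\eta\) through the Girsanov drift and the dilation coefficients.
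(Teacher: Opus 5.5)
You have the paper's architecture: the dilation by $\sqrt{\eta}L^{(k)}$ and $\sqrt{1-\eta}L^{(k)}$, projection onto the observed $\sigma$-field, and relative entropy with exchangeability and Pinsker. However, the final identification step is missing. The identity $\gamma^j_t=\E[\bar\gamma^j_t\mid\mathcal Y^j_t]$ you rely on is false for the object in Definition~\ref{def:propagation-of-chaos}. There $\gamma^j$ must be driven by the same noise as $\rho^N$, namely the innovations $W^{N,k,j}=Y^{k,j}-\sqrt{\eta}\int_0^{\cdot}b_j^{N,k}(\rho_s^{N,\eta})\,\d s$ of the interacting filter. Conditional factorization instead produces $\bigotimes_{j\le n}\Pi_t(Y^j)$: the one-particle filter map (the conditional barycenter under the ideal pair law $\mathbf q_T$) evaluated along the \emph{interacting} record $Y^j$. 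Under the true measure this process solves the limiting equation driven by $W^{N,k,j}$ only up to the extra drift $\eta\sum_k\cE_{L^{(k)}}(\Pi_t)\bigl(b_j^{N,k}(\rho_t^{N,\eta})-b^{(k)}(\Pi_t)\bigr)$. If $\mathcal Y^j$ is taken to be the record of an idealized one-particle system instead, the resulting filter is driven by a different Brownian motion and is not coupled with $\rho^N$ at all. The paper closes this with Lemma~\ref{lem:eta-local-comparison}: BDG and Gronwall give $\E\bigl[\sup_{t\le T}\|\Pi_t(Y^j)-\gamma_t^{N,\eta,j}\|_1^2\bigr]\le C\int_0^T\E\|\rho_s^{N:(j),\eta}-\Pi_s(Y^j)\|_1^2\,\d s$. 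The right side is then bounded by bootstrapping the $n=1$ case of the $N^{-1/8}$ factorization estimate. This step, via $\|\cdot\|_1^2\le2\|\cdot\|_1$ and Cauchy--Schwarz, is the actual source of $N^{-1/16}$. Your extra square root from a ``$\sqrt{\mathrm{TV}}$'' conversion is misplaced. Pinsker applied conditionally on $Y^{1:N}$ already bounds the trace distance of the conditional barycenters by the square root of the conditional entropy, and no comparison of conditional expectations repairs the coupling mismatch.

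Second, controlling the entropy of the law of the records $Y^{1:N}$ alone is not enough. Factorization needs $\Law(X^{1:n}\mid Y^{1:N})$, so you must control the entropy of the joint law $\mathbf P_{N,T}$ of the pairs $(X^j,Y^j)$ relative to $\mathbf q_T^{\otimes N}$. Your formula is also circular as stated, since its integrand involves $\rho^{N:(j)}-\gamma^j$, the very quantity you are trying to estimate. The paper bounds the joint entropy by data processing from a Girsanov transform that compensates on \emph{both} channels: $u^{k,j,1}=\sqrt{\eta}\,\Delta^{k,j}$ and $u^{k,j,2}=-\frac{\eta}{\sqrt{1-\eta}}\Delta^{k,j}$, with $\Delta^{k,j}=b_j^{N,k}(R^N)-b^{(k)}(X^j)$. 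The added drifts cancel in the $X^j$-equations, so the pairs become i.i.d.\ with law $\mathbf q_T$. The cost $\frac{\eta}{2(1-\eta)}\sum_{k,j}\E\int_0^T|\Delta_t^{k,j}|^2\,\d t$ involves only fully observed quantities, so Proposition~\ref{prop:eta-full-chaos} applies. This is also where the factor $(1-\eta)^{-1}$ in the constant comes from.
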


The proof uses a fully observed dilation.  For every  measurement
operator \(L^{(k)}\), introduce the two operators
\begin{equation}
  G^{(k,1)}:=\sqrt{\eta}\,L^{(k)},
  \qquad
  G^{(k,2)}:=\sqrt{1-\eta}\,L^{(k)},
  \qquad
  k=1,\ldots,  m.
  \label{eq:eta-dilated-operators}
\end{equation}
Since the coefficients are real and nonnegative, it follows that
\begin{align}
  &\cD_{G^{(k,1)}}+\cD_{G^{(k,2)}}
  =\cD_{L^{(k)}},
  \label{eq:eta-dilated-D}
  \\
  &\cE_{G^{(k,1)}}
  =\sqrt{\eta}\,\cE_{L^{(k)}},
  \qquad
  \cE_{G^{(k,2)}}
  =\sqrt{1-\eta}\,\cE_{L^{(k)}}.
  \label{eq:eta-dilated-E}
\end{align}
Thus the enlarged equation becomes a perfectly observed equation with \(2m\)
measurement operators.  Proposition~\ref{prop:main-exact-eta-one} applies to that equation.
The inefficient equation is then recovered by conditioning on the first
channel in each pair.  The nontrivial point is that conditioning does not
factorize at finite \(N\),  and this will be handled by a compensating Girsanov
transformation and a relative entropy estimate.

\subsection{Fully observed dilation and conditional projection}
\label{subsec:eta-dilation-projection}

For \(X\in\cL(\HH_N)\), set
\begin{equation}
  \mathfrak L_N(X)
  :=
  -\i[H^N,X]
  +
  \sum_{k=1}^m\sum_{j=1}^N
  \cD_{L_j^{(k)}}(X).
  \label{eq:eta-LN-def}
\end{equation}
Let \(\xi\) be the unique deterministic solution of
\eqref{eq:mean-state-equation}, and define the time-dependent one-particle
linear drift
\begin{equation}
  \mathfrak l_t(X)
  :=
  -\i[h+V^{\xi_t},X]
  +
  \sum_{k=1}^m
  \cD_{L^{(k)}}(X).
  \label{eq:eta-little-l-def}
\end{equation}
For later use, write
\begin{equation}
  A^{(k)}
  :=
  L^{(k)}+(L^{(k)})^*,
  \qquad
  b^{(k)}(x)
  :=
  \Tr\bigl(A^{(k)}x\bigr),
  \label{eq:eta-A-b-def}
\end{equation}
and, for \(R\in\cS(\HH_N)\),
\begin{equation}
  b_j^{N,k}(R)
  :=
  \Tr\bigl(A_j^{(k)}R\bigr)
  =
  \Tr\bigl(A^{(k)}R^{N:(j)}\bigr),
  \label{eq:eta-bNj-def}
\end{equation}
where
\(
  R^{N:(j)}
  :=
  \Tr_{[N]\setminus\{j\}}(R).
\)
Fix $T>0$.  Consider some filtered probability space
\(
  (\Omega,\cF,(\cF_t)_{0\le t\le T},\Q)
\)
which carries independent real-valued Brownian motions
\[
  Y^{k,j},\ U^{k,j},
  \qquad
  k=1,\ldots, m,
  \quad
   j=1,\ldots,  N.
\]
We write further
\(
  Y^j:=(Y^{1,j},\ldots,Y^{m,j})
\)
for the $m$-dimensional coordinate of particle $j$.

For $\vartheta\in\cL^+(\HH_N)$, let 
$\widehat R^{N,\vartheta}$ be the adapted process satisfying the Zakai equation below:
\begin{equation}
\begin{aligned}
 & \d\widehat R_t^{N,\vartheta}
  =
  \mathfrak L_N(\widehat R_t^{N,\vartheta})\,\d t
 +
  \sqrt{\eta}
  \sum_{k=1}^m\sum_{j=1}^N
  \cC_{L_j^{(k)}}(\widehat R_t^{N,\vartheta})\,\d Y_t^{k,j}
  +
  \sqrt{1-\eta}
  \sum_{k=1}^m\sum_{j=1}^N
  \cC_{L_j^{(k)}}(\widehat R_t^{N,\vartheta})\,\d U_t^{k,j},
  \\
  &\widehat R_0^{N,\vartheta}
  =\vartheta.
\end{aligned}
  \label{eq:eta-full-linear-N}
\end{equation}

\begin{lemma}
\label{lem:full-linear-reference}
For every $t\in[0,T]$,  define the random map
\[
  \Psi_t^{N,\eta}:\vartheta
  \longmapsto
  \widehat R_t^{N,\vartheta}.
\]
Then the map is linear and positive (almost surely).  Moreover, for every
$\vartheta\in\cL^+(\HH_N)$,
\begin{equation}
  \E_{\Q}\left[
    \Tr(\widehat R_t^{N,\vartheta})
  \right]
  =
  \Tr(\vartheta).
  \label{eq:eta-full-mean-trace}
\end{equation}
\end{lemma}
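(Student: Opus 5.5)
Linearity is immediate from the structure of \eqref{eq:eta-full-linear-N}. It is a linear SDE in finite dimension with bounded deterministic coefficients (the maps $\mathfrak L_N$ and $\cC_{L_j^{(k)}}$ are fixed linear maps on $\cL(\HH_N)$). Hence it has a unique strong solution for every initial datum $\vartheta\in\cL(\HH_N)$, and by pathwise uniqueness $\widehat R_t^{N,a\vartheta+b\vartheta'}=a\widehat R_t^{N,\vartheta}+b\widehat R_t^{N,\vartheta'}$ almost surely. Since $\cL(\HH_N)$ is finite-dimensional, one can fix a basis, take a single null set, and realize $\Psi_t^{N,\eta}$ as a random matrix: the stochastic flow of the linear equation. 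I would define the solution on all of $\cL(\HH_N)$ this way, so that $\Psi_t^{N,\eta}$ is a genuine linear map outside one null set.

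For the trace identity \eqref{eq:eta-full-mean-trace}, I would take the trace of the equation. The drift satisfies $\Tr(\mathfrak L_N(X))=0$, because commutators and Lindblad dissipators are traceless by cyclicity. The noise terms give $\Tr(\cC_{L_j^{(k)}}(X))=\Tr(A_j^{(k)}X)$, which is bounded by $\|A^{(k)}\|\,\|X\|_1$. Hence $\Tr(\widehat R_t^{N,\vartheta})-\Tr(\vartheta)$ is a stochastic integral against $Y^{k,j},U^{k,j}$. Its integrand is square-integrable, since the solution of a linear SDE with bounded coefficients has finite moments of all orders on $[0,T]$ (Gronwall plus BDG). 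So it is a true martingale with zero mean. This holds for every $\vartheta$, in particular for $\vartheta\in\cL^+(\HH_N)$.

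Positivity is the main point. My first route is to write the equation in Stratonovich-free ``Kraus'' form. Set $K_t:=-\i H^N-\tfrac12\sum_{k,j}(L_j^{(k)})^*L_j^{(k)}$ and $M_t:=I+\int_0^t K\,\d s+\sqrt\eta\sum\int L_j^{(k)}\d Y^{k,j}+\sqrt{1-\eta}\sum\int L_j^{(k)}\d U^{k,j}$. The aim is to show that $\widehat R_t=M\vartheta M^*$ for the solution $M$ of the linear operator SDE
\[
\d M_t=K M_t\,\d t+\sum_{k,j}\bigl(\sqrt\eta\,L_j^{(k)}\d Y_t^{k,j}+\sqrt{1-\eta}\,L_j^{(k)}\d U_t^{k,j}\bigr)M_t.
\]
Itô's product rule gives
\[
\d(M\vartheta M^*)=\bigl(KM\vartheta M^*+M\vartheta M^*K^*+\textstyle\sum (\eta+1-\eta)L_j^{(k)}M\vartheta M^*(L_j^{(k)})^*\bigr)\d t+\text{noise}.
\]
The drift equals $\mathfrak L_N(M\vartheta M^*)$, since $K X+XK^*+\sum LXL^*=-\i[H^N,X]+\sum\cD_L(X)$. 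The noise term equals $\sqrt\eta\,\cC_{L}(M\vartheta M^*)\d Y+\sqrt{1-\eta}\,\cC_L(M\vartheta M^*)\d U$. By uniqueness, $\widehat R_t^{N,\vartheta}=M_t\vartheta M_t^*$, which is manifestly positive for $\vartheta\ge0$. This also reproves linearity, and it is simultaneous for all $\vartheta$ on a single null set. The only thing to check carefully is the Itô correction: the cross-variation of $M$ with $M^*$ produces exactly $\sum_{k,j}L_j^{(k)}XL_j^{(k)*}$ with total weight $\eta+(1-\eta)=1$, because the $Y$'s and $U$'s are independent. I expect this bookkeeping, rather than any analytic difficulty, to be the only delicate step.
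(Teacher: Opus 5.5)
Your proposal is correct and follows the paper's proof. Both use the representation $\widehat R_t^{N,\vartheta}=\mathscr U_t^N\vartheta(\mathscr U_t^N)^*$, with $\mathscr U^N$ the solution of the linear operator SDE, to get linearity and positivity, and both take trace and expectation for the mean-trace identity; your check that the stochastic integral is a true martingale fills a step the paper leaves implicit, and your preliminary display $M_t:=I+\int_0^t K\,\d s+\cdots$ omits the factor $M_s$ in the integrands, but the object you actually use is the solution of the SDE you write next.
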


\begin{proof}
Let $\mathscr U^N$ be the unique solution to the following SDE taking values in $\cL(\HH_N)$:
\[
\begin{aligned}
  \d\mathscr U_t^N
  &=
  \left(
    -\i H^N
    -\frac12
    \sum_{k=1}^m\sum_{j=1}^N
    (L_j^{(k)})^*L_j^{(k)}
  \right)\mathscr U_t^N\,\d t+
  \sum_{k=1}^m\sum_{j=1}^N
  \left(
    \sqrt{\eta}\,L_j^{(k)}\,\d Y_t^{k,j}
    +
    \sqrt{1-\eta}\,L_j^{(k)}\,\d U_t^{k,j}
  \right)\mathscr U_t^N,
  \\
  \mathscr U_0^N&=I_{\HH_N}.
\end{aligned}
\]
As  \eqref{eq:eta-full-linear-N} admits a unique pathwise solution,  It\^o's formula gives,  
\begin{equation}
  \widehat R_t^{N,\vartheta}
  =
  \mathscr U_t^N\vartheta(\mathscr U_t^N)^*.
  \label{eq:eta-full-propagator}
\end{equation}
Hence the map is linear and  positive.  Taking the trace and
expectation in \eqref{eq:eta-full-linear-N} gives
\eqref{eq:eta-full-mean-trace}, since the Hamiltonian and Lindblad drifts
have zero trace and the stochastic integrals have zero expectation.
\end{proof}

We now specialize to
\(
  \vartheta=\gamma_0^{\otimes N}
\)
and omit the superscript $\vartheta$.  Set
\begin{equation}
  M_t^N:=\Tr(\widehat R_t^N)
  \qquad\mbox{ and } \qquad 
  R_t^N:=\frac{\widehat R_t^N}{M_t^N}.
  \label{eq:eta-MR-def}
\end{equation}
The matrix stochastic exponential $\mathscr U_t^N$ is invertible almost surely.  Taking the trace in
\eqref{eq:eta-full-linear-N} yields
\[
\begin{aligned}
  \d M_t^N
  &=
  M_t^N
  \sum_{k=1}^m\sum_{j=1}^N
  \bigg(
    \sqrt{\eta}\,b_j^{N,k}(R_t^N)\,\d Y_t^{k,j}+
    \sqrt{1-\eta}\,b_j^{N,k}(R_t^N)\,\d U_t^{k,j}
  \bigg).
\end{aligned}
\]
The functions $b_j^{N,k}$ are uniformly bounded on the state space, so
Novikov's condition holds.  Thus $M^N$ is a strictly positive martingale
with mean one, and  \eqref{eq:eta-full-propagator} implies $\widehat R_t^N\ge0$ and $R_t^N\in\cS(\HH_N)$.   In addition,
\begin{equation}
  \frac{\d\Q^{N,\mathrm{full}}}{\d\Q}
  \bigg|_{\cF_T}
  :=M_T^N
  \label{eq:eta-full-measure}
\end{equation}
defines a probability measure equivalent to $\Q$.  Under
$\Q^{N,\mathrm{full}}$, the processes
\begin{align}
  B_t^{k,j,1}
  &:={}
  Y_t^{k,j}
  -\sqrt{\eta}
  \int_0^t b_j^{N,k}(R_s^N)\,\d s,
  \label{eq:eta-B1-full}
  \\
  B_t^{k,j,2}
  &:={}
  U_t^{k,j}
  -\sqrt{1-\eta}
  \int_0^t b_j^{N,k}(R_s^N)\,\d s
  \label{eq:eta-B2-full}
\end{align}
form a family of  independent Brownian motions.  Normalizing
\eqref{eq:eta-full-linear-N} together with Itô's formula yields
\begin{equation}
\begin{aligned}
  \d R_t^N
  &=
  \mathfrak L_N(R_t^N)\,\d t
 +
  \sqrt{\eta}
  \sum_{k=1}^m\sum_{j=1}^N
  \cE_{L_j^{(k)}}(R_t^N)\,\d B_t^{k,j,1}+
  \sqrt{1-\eta}
  \sum_{k=1}^m\sum_{j=1}^N
  \cE_{L_j^{(k)}}(R_t^N)\,\d B_t^{k,j,2}.
\end{aligned}
  \label{eq:eta-full-nonlinear-N}
\end{equation}
On the same probability space,  let \(X^1,\ldots,X^N\) solve
\begin{equation}
\begin{aligned}
  \d X_t^j
  &=
  \mathfrak l_t(X_t^j)\,\d t
  +
  \sqrt{\eta}
  \sum_{k=1}^m
  \cE_{L^{(k)}}(X_t^j)\,\d B_t^{k,j,1}
  +
  \sqrt{1-\eta}
  \sum_{k=1}^m
  \cE_{L^{(k)}}(X_t^j)\,\d B_t^{k,j,2},
  \\
  X_0^j&=\gamma_0,
\end{aligned}
  \label{eq:eta-full-MF}
\end{equation}
where we note that  \(X^1,\ldots,X^N\) are well defined under both probability measures $\Q$,  
$\Q^{N,\mathrm{full}}$.  
The processes \(X^1,\ldots,X^N\) are independent and identically
distributed under \(\Q^{N,\mathrm{full}}\).  Taking expectations in
\eqref{eq:eta-full-MF},  one finds that  $t\longmapsto  \E_{\Q^{N,\mathrm{full}}}[X_t^j]$ solves \eqref{eq:mean-state-equation},  which shows by means of the uniqueness for
\eqref{eq:mean-state-equation} 
\begin{equation}
  \E_{\Q^{N,\mathrm{full}}}[X_t^j]
  =
  \xi_t.
  \label{eq:eta-full-MF-mean}
\end{equation}

\begin{lemma}
\label{lem:eta-full-exchangeability}
Under $\Q^{N,\mathrm{full}}$, the family
\(
  \bigl(R^N,(X^j,Y^j)_{j=1}^N\bigr)
\)
is exchangeable.  In particular, the path-space law
of the pairs $(X^j,Y^j)$ is invariant under every permutation of the
particle labels.
\end{lemma}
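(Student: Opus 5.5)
The strategy is to express every object in the family as a measurable functional of the initial data and the Brownian drivers, and then use the permutation covariance of the coefficients together with pathwise uniqueness. Under $\Q^{N,\mathrm{full}}$, the processes $B^{k,j,1},B^{k,j,2}$ form a $2mN$-dimensional standard Brownian motion, by Girsanov's theorem applied with the density \eqref{eq:eta-full-measure}. The pair $(R^N,(X^j)_j)$ solves the closed system \eqref{eq:eta-full-nonlinear-N}--\eqref{eq:eta-full-MF}, driven by the $B$'s, with deterministic initial data $\gamma_0^{\otimes N}$ and $\gamma_0$. The coefficients are locally Lipschitz and bounded on the state space, so pathwise uniqueness holds (by Mirrahimi--van Handel for the $N$-particle filter; for \eqref{eq:eta-full-MF} the drift $\mathfrak l_t$ is linear with the deterministic $\xi_t$). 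Hence there is a measurable map $\Phi$ with $(R^N,X^1,\ldots,X^N)=\Phi(B)$. The observations are then recovered by
\[
  Y^{k,j}_t = B^{k,j,1}_t+\sqrt\eta\int_0^t \Tr\bigl(A^{(k)}_jR^N_s\bigr)\,\d s,
\]
so the whole family $\bigl(R^N,(X^j,Y^j)_j\bigr)$ is a measurable functional $\widetilde\Phi(B)$.

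The key step is the equivariance of $\widetilde\Phi$. Fix $\pi\in\mathfrak S_N$ and consider the relabelled family $\widetilde B^{k,j,i}:=B^{k,\pi^{-1}(j),i}$. Conjugation satisfies $U_\pi A_jU_\pi^*=A_{\pi(j)}$ for one-particle lifts, and $U_\pi V_{j\ell}U_\pi^*=V_{\pi(j)\pi(\ell)}$; the latter uses the symmetry of $V$ to handle unordered pairs. Consequently $U_\pi H^NU_\pi^*=H^N$, the map $\mathfrak L_N$ commutes with $\mathrm{Ad}_{U_\pi}$, and
\[
  U_\pi\,\cE_{L_j^{(k)}}(R)\,U_\pi^*=\cE_{L_{\pi(j)}^{(k)}}\bigl(U_\pi RU_\pi^*\bigr),
\]
where the scalar correction term transforms correctly because $\Tr(A_jR)=\Tr(A_{\pi(j)}U_\pi RU_\pi^*)$. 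It follows that $U_\pi R^NU_\pi^*$ solves \eqref{eq:eta-full-nonlinear-N} driven by $\widetilde B$. The initial datum is invariant, since $U_\pi\gamma_0^{\otimes N}U_\pi^*=\gamma_0^{\otimes N}$, and $X^{\pi^{-1}(j)}$ solves \eqref{eq:eta-full-MF} with driver $\widetilde B^{\cdot,j,\cdot}$. The relabelled observation then reads
\[
  Y^{k,\pi^{-1}(j)}=\widetilde B^{k,j,1}+\sqrt\eta\int_0^\cdot\Tr\bigl(A_j^{(k)}U_\pi R^N_sU_\pi^*\bigr)\,\d s .
\]
By pathwise uniqueness, $\widetilde\Phi(\widetilde B)$ is exactly the simultaneously relabelled family $\bigl(U_\pi R^NU_\pi^*,(X^{\pi^{-1}(j)},Y^{\pi^{-1}(j)})_j\bigr)$.

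It remains to note that $\widetilde B\stackrel{\mathrm{law}}{=}B$ under $\Q^{N,\mathrm{full}}$, since a permutation of independent standard Brownian coordinates preserves the law. Hence the relabelled family has the same law as the original, which is the claimed exchangeability. The statement about the pairs $(X^j,Y^j)$ follows by projecting onto these coordinates.

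The step I expect to need the most care is justifying the functional representation $\widetilde\Phi$, because $\Q^{N,\mathrm{full}}$ was constructed through a change of measure rather than by solving the equations directly. To handle this, I would check that the $B$'s generate a filtration for which strong existence and uniqueness hold, so that $(R^N,X)$ is adapted to $\sigma(B)$. The fact that $R^N$, defined by normalizing the Zakai solution, solves \eqref{eq:eta-full-nonlinear-N} under $\Q^{N,\mathrm{full}}$ is given above the lemma. Combining this with strong uniqueness (Yamada--Watanabe) identifies it with $\Phi(B)$, which closes the argument.
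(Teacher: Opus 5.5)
Your proposal is correct and follows the same route as the paper. You conjugate $R^N$ by $U_\pi$ and relabel the drivers $B^{k,j,a}\mapsto B^{k,\pi^{-1}(j),a}$, and you use permutation invariance of $H^N$, of the measurement coefficients and of $\gamma_0^{\otimes N}$ so that the coupled system \eqref{eq:eta-full-nonlinear-N}--\eqref{eq:eta-full-MF} is unchanged; you then conclude by uniqueness together with invariance of the Brownian law, and recover $Y^j$ from $\d Y_t^{k,j}=\sqrt{\eta}\,b_j^{N,k}(R_t^N)\,\d t+\d B_t^{k,j,1}$. Your explicit Yamada--Watanabe functional representation and the check $U_\pi A_jU_\pi^*=A_{\pi(j)}$ simply spell out what the paper compresses into ``uniqueness in law for the coupled system.''
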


\begin{proof}
Fix $\pi\in\mathfrak S_N$.  Conjugating the state by $U_\pi$ and replacing
$(B^{k,j,a})_{k,j,a}$ with
$(B^{k,\pi^{-1}(j),a})_{k,j,a}$ leaves the coupled equations
\eqref{eq:eta-full-nonlinear-N} and \eqref{eq:eta-full-MF} unchanged,
because $H^N$ is permutation invariant, all particles have the same
measurement operators, and the initial state is
$\gamma_0^{\otimes N}$.  The permuted Brownian family has the same law as
the original one.  Uniqueness in law for the coupled system therefore gives
the asserted equality in law.  The observation coordinates are recovered
from
\[
  \d Y_t^{k,j}
  =
  \sqrt{\eta}\,b_j^{N,k}(R_t^N)\,\d t
  +\d B_t^{k,j,1},
\]
so they are permuted simultaneously with the corresponding particles.
\end{proof}

\begin{proposition}
\label{prop:eta-full-chaos}
For every fixed \(n\ge1\) and \(T>0\), there exists
\(C_{T,n}^{\mathrm{full}}<\infty\), independent of \(N\), such that
\begin{equation}
\begin{aligned}
  \varepsilon_{N,n}^{\mathrm{full}}(T)
  &:={}
  \sup_{0\le t\le T}
  \E_{\Q^{N,\mathrm{full}}}
  \left[
    \left\|
      R_t^{N:n}
      -
      X_t^1\otimes\cdots\otimes X_t^n
    \right\|_1
  \right] \le
  C_{T,n}^{\mathrm{full}}N^{-1/4}.
\end{aligned}
  \label{eq:eta-full-chaos}
\end{equation}
\end{proposition}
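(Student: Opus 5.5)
The plan is to recognise that, under \(\Q^{N,\mathrm{full}}\), the coupled system \eqref{eq:eta-full-nonlinear-N}--\eqref{eq:eta-full-MF} is exactly the synchronously coupled pair \eqref{eq:N-Belavkin}--\eqref{eq:MF-Belavkin} for a \emph{perfectly observed} model. This model has \(2m\) measurement operators \(G^{(k,1)},G^{(k,2)}\) from \eqref{eq:eta-dilated-operators}, the same \(h,V\), efficiency one, and Brownian family \((B^{k,j,a})_{k,j,a}\). The estimate should then follow directly from Proposition~\ref{prop:main-exact-eta-one}.

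\textbf{Step 1: identify the \(N\)-particle equation.} By \eqref{eq:eta-dilated-D}, the drift \(\mathfrak L_N\) from \eqref{eq:eta-LN-def} coincides with \(-\i[H^N,\cdot]+\sum_{k,a}\sum_j\cD_{G_j^{(k,a)}}\). By \eqref{eq:eta-dilated-E}, the noise coefficients \(\sqrt\eta\,\cE_{L_j^{(k)}}\) and \(\sqrt{1-\eta}\,\cE_{L_j^{(k)}}\) are exactly \(\cE_{G_j^{(k,1)}}\) and \(\cE_{G_j^{(k,2)}}\). Note that \(\cE_{cL}=c\,\cE_L\) for real \(c\ge0\), since \(\cC_{cL}=c\,\cC_L\) and \(\Tr((cL+cL^*)X)=c\,\Tr((L+L^*)X)\). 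Under \(\Q^{N,\mathrm{full}}\) the processes \(B^{k,j,a}\) are independent Brownian motions (Girsanov, as recorded after \eqref{eq:eta-full-measure}), and \(R_0^N=\gamma_0^{\otimes N}\). Hence \(R^N\) solves \eqref{eq:N-Belavkin} with \(\eta=1\) for the dilated operator family.

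\textbf{Step 2: identify the limiting equation.} The dilated mean-state equation has drift \(-\i[h+V^{\xi},\cdot]+\sum_{k,a}\cD_{G^{(k,a)}}=\mathfrak l_t\), by \eqref{eq:eta-dilated-D}. It is therefore the same ODE \eqref{eq:mean-state-equation}, with the same unique solution \(\xi\). By \eqref{eq:eta-full-MF-mean}, \(\E_{\Q^{N,\mathrm{full}}}[X_t^j]=\xi_t\). Thus each \(X^j\) solves the McKean--Vlasov equation \eqref{eq:MF-Belavkin} for the dilated model with \(\eta=1\), driven by \((B^{k,j,1},B^{k,j,2})_k\) and started at \(\gamma_0\). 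The self-consistency holds because the deterministic \(\xi\) frozen in \(\mathfrak l_t\) equals the mean of the solution. By well-posedness of \eqref{eq:MF-Belavkin} from \cite{Amini2025}, \(X^j\) is the limiting process of the dilated model. The coupling is synchronous, since \(R^N\) and \(X^j\) use the same Brownian motions.

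\textbf{Step 3: conclude.} Hypothesis~\ref{hyp:perfect-efficiency-tensorization-purity}(2) holds for the dilated model. Applying Proposition~\ref{prop:main-exact-eta-one} on the probability space \((\Omega,\cF,\Q^{N,\mathrm{full}})\) gives \eqref{eq:eta-full-chaos}. The constant depends only on \(T,n,h,V\) and the operators \(G^{(k,a)}\), hence on \(\eta\) but not on \(N\). The result does not depend on the choice of filtered probability space, because pathwise well-posedness determines the joint law, as noted after Definition~\ref{def:propagation-of-chaos}.

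\textbf{Main obstacle.} I expect the delicate point to be Step 2. One must make sure that the mean-field self-consistency is taken under \(\Q^{N,\mathrm{full}}\) and not under \(\Q\), and that freezing \(\xi\) in \eqref{eq:eta-full-MF} is legitimate. This is exactly what \eqref{eq:eta-full-MF-mean} together with uniqueness for \eqref{eq:mean-state-equation} provides. One also has to check that Kolokoltsov's input, Theorem~\ref{thm:pure-input}, used inside Proposition~\ref{prop:main-exact-eta-one}, allows \(2m\) measurement operators. This is already noted there.
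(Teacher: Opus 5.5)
Your proposal is correct and follows the paper's proof. The paper also uses \eqref{eq:eta-dilated-D}--\eqref{eq:eta-dilated-E} to read \eqref{eq:eta-full-nonlinear-N} and \eqref{eq:eta-full-MF} as the perfectly observed \(N\)-particle and mean-field equations for the \(2m\) operators \(G^{(k,1)},G^{(k,2)}\), and then invokes Proposition~\ref{prop:main-exact-eta-one}; your additional checks (the self-consistency \(\E_{\Q^{N,\mathrm{full}}}[X_t^j]=\xi_t\), the synchronous coupling through the common \(B^{k,j,a}\), and independence from the chosen probability space) make explicit what the paper leaves implicit.
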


\begin{proof}
In view of \eqref{eq:eta-dilated-D}--\eqref{eq:eta-dilated-E}, equation
\eqref{eq:eta-full-nonlinear-N} is exactly the perfect-efficiency Belavkin equation
with the \(2m\) measurement operators
\[
  G^{(k,1)},\ G^{(k,2)},
  \qquad
  k=1,\ldots, m, 
\]
and equation \eqref{eq:eta-full-MF} is the corresponding mean-field equation.
Proposition~\ref{prop:main-exact-eta-one}, applied to this finite family of
operators, gives \eqref{eq:eta-full-chaos}.
\end{proof}

We now retain only the observed outputs \(Y^{k,j}\).  Define the filtration 
\begin{equation}
  \cY_t^N
  :=
  \sigma\bigl(
    Y_s^{k,j}:
    0\le s\le t,\ 1\le k\le m,\ 1\le j\le N
  \bigr),
  \label{eq:eta-YN-def}
\end{equation}
with the usual completion, and define the conditional expectation
\begin{equation}
  \widehat\rho_t^{N,\eta}
  :=
  \E_{\Q}
  \left[
    \widehat R_t^N\mid\cY_t^N
  \right].
  \label{eq:eta-hat-rho-conditional}
\end{equation}

\begin{proposition}
\label{prop:eta-conditional-reduction}
The process \(\widehat\rho^{N,\eta}\) defined by  \eqref{eq:eta-hat-rho-conditional} must satisfy
\begin{equation}
\begin{aligned}
  \d\widehat\rho_t^{N,\eta}
  &=
  \mathfrak L_N(\widehat\rho_t^{N,\eta})\,\d t+
  \sqrt{\eta}
  \sum_{k=1}^m\sum_{j=1}^N
  \cC_{L_j^{(k)}}(\widehat\rho_t^{N,\eta})\,\d Y_t^{k,j},
  \\
  \widehat\rho_0^{N,\eta}
  &=
  \gamma_0^{\otimes N}.
\end{aligned}
  \label{eq:eta-one-channel-linear}
\end{equation}
Set
\begin{equation}
  Z_t^{N,\eta}
  :=
  \Tr(\widehat\rho_t^{N,\eta})>0,
  \qquad
  \rho_t^{N,\eta}
  :=
  \frac{\widehat\rho_t^{N,\eta}}{Z_t^{N,\eta}}.
  \label{eq:eta-Z-rho-def}
\end{equation}
Then
\begin{equation}
  Z_t^{N,\eta}
  =
  \E_{\Q}
  \left[
    M_t^N\mid\cY_t^N
  \right].
  \label{eq:eta-Z-as-conditional-M}
\end{equation}
The measure $\Q^{N,\eta}$ given by
\begin{equation}
  \frac{\d\Q^{N,\eta}}{\d\Q}
  \bigg|_{\cY_T^N}
  :=
  Z_T^{N,\eta}
  \label{eq:eta-Qeta-def}
\end{equation}
is the restriction of \(\Q^{N,\mathrm{full}}\) to \(\cY_T^N\), and it holds that
\begin{equation}
  \rho_t^{N,\eta}
  =
  \E_{\Q^{N,\mathrm{full}}}
  \left[
    R_t^N\mid\cY_t^N
  \right].
  \label{eq:eta-normalized-conditional}
\end{equation}
Moreover,  the measures \(\Q^{N,\eta}\) and
\(\Q\) are equivalent on \(\cY_T^N\), and the processes
\begin{equation}
  W_t^{N,k,j}
  :=
  Y_t^{k,j}
  -
  \sqrt{\eta}
  \int_0^t
  b_j^{N,k}(\rho_s^{N,\eta})\,\d s
  \label{eq:eta-WN-def}
\end{equation}
form a family of  independent Brownian motions under \(\Q^{N,\eta}\).  In particular,  
the normalized process solves
\begin{equation}
\begin{aligned}
  \d\rho_t^{N,\eta}
  &=
  \mathfrak L_N(\rho_t^{N,\eta})\,\d t+
  \sqrt{\eta}
  \sum_{k=1}^m\sum_{j=1}^N
  \cE_{L_j^{(k)}}(\rho_t^{N,\eta})\,\d W_t^{N,k,j}.
\end{aligned}
  \label{eq:eta-N-target}
\end{equation}
\end{proposition}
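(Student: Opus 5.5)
The plan is the classical nonlinear-filtering reduction under the reference measure $\Q$. Under $\Q$ the families $Y$ and $U$ are independent Brownian motions. So conditioning the linear equation \eqref{eq:eta-full-linear-N} on $\cY^N_t$ should remove the $U$-integrals and leave the $Y$-integrals, with the linear drift unchanged.

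\textbf{Step 1: the conditional linear equation.} I would first verify that $\widehat R^N_t$ is square integrable, uniformly on $[0,T]$. This follows from the propagator representation \eqref{eq:eta-full-propagator} together with standard moment bounds for the linear SDE defining $\mathscr U^N$, whose coefficients are bounded.

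Next I would use the standard commutation rules for an independent Brownian augmentation:
\[
\E_\Q\Bigl[\int_0^t \Phi_s\,\d s\,\Big|\,\cY_t^N\Bigr]=\int_0^t \E_\Q[\Phi_s\mid\cY_s^N]\,\d s,
\qquad
\E_\Q\Bigl[\int_0^t \Phi_s\,\d Y^{k,j}_s\,\Big|\,\cY_t^N\Bigr]=\int_0^t \E_\Q[\Phi_s\mid\cY_s^N]\,\d Y^{k,j}_s,
\qquad
\E_\Q\Bigl[\int_0^t \Phi_s\,\d U^{k,j}_s\,\Big|\,\cY_t^N\Bigr]=0.
\]
Each rule can be proved by testing against the total family of exponentials $\exp\bigl(\int_0^t f\cdot\d Y-\frac12\int_0^t|f|^2\,\d s\bigr)$, for deterministic $f$, and applying It\^o's product rule.

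Since $\mathfrak L_N$ and $\cC_{L_j^{(k)}}$ are deterministic linear maps, they commute with conditional expectation. Applying the three rules to the integral form of \eqref{eq:eta-full-linear-N} then gives \eqref{eq:eta-one-channel-linear}. Taking the trace, which is linear, gives \eqref{eq:eta-Z-as-conditional-M}.

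\textbf{Step 2: the change of measure and the Kallianpur--Striebel formula.} For $F\in\cY_T^N$ I would compute
\[
\Q^{N,\mathrm{full}}(F)=\E_\Q[M_T^N\1_F]=\E_\Q\bigl[\E_\Q[M_T^N\mid\cY_T^N]\1_F\bigr]=\E_\Q[Z_T^{N,\eta}\1_F].
\]
So $\Q^{N,\eta}$ is the restriction of $\Q^{N,\mathrm{full}}$ to $\cY_T^N$.

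For $A\in\cY_t^N$ bounded, the abstract Bayes formula together with the martingale property of $M^N$ gives
\[
\E_{\Q^{N,\mathrm{full}}}[R^N_t\mid\cY_t^N]=\frac{\E_\Q[M_t^NR_t^N\mid\cY_t^N]}{\E_\Q[M^N_t\mid\cY_t^N]}=\frac{\widehat\rho_t^{N,\eta}}{Z^{N,\eta}_t}.
\]
This is \eqref{eq:eta-normalized-conditional}.

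Strict positivity of $Z^{N,\eta}_t$ requires care, because a conditional expectation of a strictly positive variable is only almost surely strictly positive at fixed times. The equation for $Z$ settles this. Indeed, \eqref{eq:eta-one-channel-linear} gives
\[
\d Z=\sqrt\eta\,Z\sum_{k,j} b^{N,k}_j(\rho^{N,\eta})\,\d Y^{k,j},
\]
so $Z$ is a stochastic exponential with bounded integrand, hence strictly positive for all $t$. This also gives equivalence of $\Q^{N,\eta}$ and $\Q$ on $\cY_T^N$. The boundedness comes from $b^{N,k}_j(\rho)$ being bounded by $\|A^{(k)}\|$ once $\rho$ is known to be a state; $\rho^{N,\eta}$ is a state because $\widehat\rho^{N,\eta}\ge0$ as a conditional expectation of positive operators.

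\textbf{Step 3: innovations and the normalized equation.} Girsanov's theorem applied to the density $Z^{N,\eta}$, whose logarithmic derivative is $\sqrt\eta\, b^{N,k}_j(\rho^{N,\eta})$, shows that the processes $W^{N,k,j}$ of \eqref{eq:eta-WN-def} are independent $\cY^N$-Brownian motions under $\Q^{N,\eta}$. Finally, I would apply It\^o's formula to $\widehat\rho/Z$, exactly as in the derivation of \eqref{eq:eta-full-nonlinear-N} but with only the $Y$-channel. The correction terms reorganize $\cC_{L^{(k)}_j}$ into $\cE_{L_j^{(k)}}$ and $\d Y$ into $\d W^{N}$, which yields \eqref{eq:eta-N-target}.

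\textbf{Main obstacle.} I expect the main obstacle to be Step 1, specifically the rigorous interchange of conditional expectation with the stochastic integrals. This needs the integrability of $\widehat R^N$ and a clean treatment of the independence of $U$ from $\cY^N$ under $\Q$. I would handle it by the exponential-martingale duality argument above. Everything else is a standard Kallianpur--Striebel computation.
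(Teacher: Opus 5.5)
Your proposal is correct and follows essentially the same route as the paper: you project the integral form of the full Zakai equation onto $\cY_t^N$ (the $U$-integrals vanish, while the drift and $Y$-integrals commute with the projection), take the trace to identify $Z^{N,\eta}$, and then use the restriction identity, Bayes' formula, Girsanov's theorem and It\^o's formula for the normalization. The differences are only technical. You verify the projection rules with exponential test functionals rather than elementary predictable integrands. You also obtain pathwise strict positivity of $Z^{N,\eta}$ from its linear SDE, which is rigorous once the integrand is taken to be $\Tr(A_j^{(k)}\widehat\rho^{N,\eta})/Z^{N,\eta}$ on $\{Z^{N,\eta}>0\}$ and $0$ otherwise, using $\widehat\rho^{N,\eta}\ge0$; the paper instead simply asserts positivity because $Z_t^{N,\eta}$ is the conditional expectation of $M_t^N>0$.
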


\begin{proof}
We first justify the projection of the stochastic integrals.  If \(F\) is a
square-integrable \((\cF_t)\)-predictable process, then, coordinatewise in
\(\cL(\HH_N)\),
\begin{equation}
  \E_{\Q}
  \left[
    \int_0^t F_s\,\d U_s^{k,j}
    \mathrel{\bigg|}
    \cY_t^N
  \right]
  =0.
  \label{eq:eta-U-projection}
\end{equation}
Indeed, for an elementary predictable process and a bounded
\(\cY_t^N\)-measurable random variable \(H\), each expectation is a sum of
terms of the form
\[
  \E_{\Q}
  \left[
    H F_{t_\ell}
    \bigl(
      U_{t_{\ell+1}}^{k,j}-U_{t_\ell}^{k,j}
    \bigr)
  \right],
\]
which vanish because the displayed increment is centered and independent of
the entire \(Y\)-path and of the joint past at time \(t_\ell\).  The general
case follows by the It\^o isometry.  Notice that the integrand may depend on
both \(Y\) and \(U\).

Similarly,
\begin{equation}
  \E_{\Q}
  \left[
    \int_0^t F_s\,\d Y_s^{k,j}
    \mathrel{\bigg|}
    \cY_t^N
  \right]
  =
  \int_0^t
  \E_{\Q}
  \left[
    F_s\mid\cY_s^N
  \right]\d Y_s^{k,j}.
  \label{eq:eta-Y-projection}
\end{equation}
For \(s\le t\), the future increments of \(Y\) after time \(s\) are
independent of \(\cF_s\), so
\[
  \E_{\Q}
  \left[
    \widehat R_s^N\mid\cY_t^N
  \right]
  =
  \E_{\Q}
  \left[
    \widehat R_s^N\mid\cY_s^N
  \right].
\]
Applying these identities to the integral form of
\eqref{eq:eta-full-linear-N}, and using the linearity of all coefficients,
gives \eqref{eq:eta-one-channel-linear}.

Equation \eqref{eq:eta-Z-as-conditional-M} follows by taking the trace.  For
every \(A\in\cY_T^N\),
\[
\begin{aligned}
  \Q^{N,\mathrm{full}}(A)
  =
  \E_{\Q}[\1_A M_T^N]
  =
  \E_{\Q}
  \left[
    \1_A
    \E_{\Q}[M_T^N\mid\cY_T^N]
  \right]
  =
  \E_{\Q}[\1_A Z_T^{N,\eta}],
\end{aligned}
\]
which proves that \(\Q^{N,\eta}\) is the stated restriction.

Bayes' formula gives
\[
\begin{aligned}
  \E_{\Q^{N,\mathrm{full}}}
  \left[
    R_t^N\mid\cY_t^N
  \right]
  &=
  \frac{
    \E_{\Q}
    \left[
      M_T^N R_t^N\mid\cY_t^N
    \right]
  }{
    \E_{\Q}
    \left[
      M_T^N\mid\cY_t^N
    \right]
  }
  =
  \frac{
    \E_{\Q}
    \left[
      M_t^N R_t^N\mid\cY_t^N
    \right]
  }{Z_t^{N,\eta}}
  =
  \rho_t^{N,\eta},
\end{aligned}
\]
because \(R_t^N\) is \(\cF_t\)-measurable and \(M^N\) is a
\(\Q\)-martingale.  Since \(M_t^N>0\), its conditional expectation
\(Z_t^{N,\eta}\) is strictly positive.  Finally, taking the trace in
\eqref{eq:eta-one-channel-linear} shows that
\[
  \d Z_t^{N,\eta}
  =
  \sqrt{\eta}\,Z_t^{N,\eta}
  \sum_{k=1}^m\sum_{j=1}^N
  b_j^{N,k}(\rho_t^{N,\eta})\,\d Y_t^{k,j}.
\]
The integrands are bounded.  Girsanov's theorem therefore gives
\eqref{eq:eta-WN-def}, and normalization of
\eqref{eq:eta-one-channel-linear} with Itô's formula gives \eqref{eq:eta-N-target}.
\end{proof}

For \(1\le n\le N\), set
\begin{equation}
  \mathbf X_t^n
  :=
  X_t^1\otimes\cdots\otimes X_t^n,
  \qquad
  \Theta_t^{N,n}
  :=
  \E_{\Q^{N,\mathrm{full}}}
  \left[
    \mathbf X_t^n\mid\cY_t^N
  \right],
  \label{eq:eta-product-Theta-def}
\end{equation}
where we recall that \(X^1,\ldots,X^N\) are defined by 
  \eqref{eq:eta-full-MF}.  Since partial trace commutes with conditional expectation,  one has
\[
  \rho_t^{N:n,\eta}
  =
  \E_{\Q^{N,\mathrm{full}}}
  \left[
    R_t^{N:n}\mid\cY_t^N
  \right].
\]
Conditional Jensen's inequality and the equality of
\(\Q^{N,\eta}\) and \(\Q^{N,\mathrm{full}}\) on \(\cY_t^N\) give
\begin{equation}
\begin{aligned}
  &\sup_{0\le t\le T}
  \E_{\Q^{N,\eta}}
  \left[
    \left\|
      \rho_t^{N:n,\eta}-\Theta_t^{N,n}
    \right\|_1
  \right]
  \le
  C_{T,n}^{\mathrm{full}}N^{-1/4},
\end{aligned}
  \label{eq:eta-first-term-bound}
\end{equation}
for some constant $C_{T,n}^{\mathrm{full}}>0$ appearing in Proposition \ref{prop:eta-full-chaos}.

\subsection{Relative entropy and conditional factorization}
\label{subsec:eta-conditional-factorization}

For \(0\le t\le T\),  define the Polish spaces as follows
\[
  \mathsf X_t:=C([0,t],\cS(\HH)),
  \qquad
  \mathsf Y_t:=C([0,t],\mathbb R^m).
\]
Let \(\mathbf q_T\) be the law on
\(\mathsf X_T\times\mathsf Y_T\) of a pair \((X,Y)\) satisfying
\begin{equation}
\begin{aligned}
  \d X_t
  &=
  \mathfrak l_t(X_t)\,\d t
  +
  \sqrt{\eta}
  \sum_{k=1}^m
  \cE_{L^{(k)}}(X_t)\,\d\beta_t^{k,1}
  +
  \sqrt{1-\eta}
  \sum_{k=1}^m
  \cE_{L^{(k)}}(X_t)\,\d\beta_t^{k,2},
  \qquad
  X_0=\gamma_0,
  \\
  \d Y_t^k
  &=
  \sqrt{\eta}\,b^{(k)}(X_t)\,\d t
  +
  \d\beta_t^{k,1},
  \qquad
  Y_0^k=0,
\end{aligned}
  \label{eq:eta-ideal-pair}
\end{equation}
where \(\{\beta^{k,a}:1\le k\le m,\ a=1,2\}\) are independent
Brownian motions.  Let \(\mathbf P_{N,T}\) denote the joint law under
\(\Q^{N,\mathrm{full}}\) of
\(
  (X^1,Y^1),\ldots,(X^N,Y^N)
\) with $Y^j=(Y^{1,j},\ldots,Y^{m,j})$.  Define
\begin{equation}
  \Delta_t^{k,j}
  :=
  b_j^{N,k}(R_t^N)-b^{(k)}(X_t^j)
  \label{eq:eta-Delta-def}
\end{equation}
and
\begin{equation}
  u_t^{k,j,1}
  :=
  \sqrt{\eta}\,\Delta_t^{k,j},
  \qquad
  u_t^{k,j,2}
  :=
  -\frac{\eta}{\sqrt{1-\eta}}\,\Delta_t^{k,j}.
  \label{eq:eta-controls-def}
\end{equation}
As the controls are bounded and predictable,  define $\widetilde\Q^N$ by 
\begin{equation}
\begin{aligned}
  \frac{\d\widetilde\Q^N}{\d\Q^{N,\mathrm{full}}}
  \bigg|_{\cF_T}
  &:={}
  \exp\Bigg(
    -
    \sum_{k=1}^m\sum_{j=1}^N\sum_{a=1}^2
    \int_0^T u_s^{k,j,a}\,\d B_s^{k,j,a}    -
    \frac12
    \sum_{k=1}^m\sum_{j=1}^N\sum_{a=1}^2
    \int_0^T |u_s^{k,j,a}|^2\,\d s
  \Bigg).
\end{aligned}
  \label{eq:eta-compensating-density}
\end{equation}
Again,  Novikov's condition holds,  then under \(\widetilde\Q^N\), the processes
\begin{equation}
  \widetilde B_t^{k,j,a}
  :=
  B_t^{k,j,a}
  +
  \int_0^t u_s^{k,j,a}\,\d s
  \label{eq:eta-Btilde-def}
\end{equation}
form a family of independent Brownian motions,  and a straightforward verification gives
\[
  \d Y_t^{k,j}
  =
  \sqrt{\eta}\,b^{(k)}(X_t^j)\,\d t
  +
  \d\widetilde B_t^{k,j,1}.
\]
If we replace \(B^{k,j,a}\)  by \(\widetilde B^{k,j,a}\) in the SDE
for \(X^j\), the additional drift corresponding to the \(k\)-th operator becomes
\[
  -\cE_{L^{(k)}}(X_t^j)
  \left(
    \sqrt{\eta}\,u_t^{k,j,1}
    +
    \sqrt{1-\eta}\,u_t^{k,j,2}
  \right)
  =0.
\]
Hence, by pathwise uniqueness,
\begin{equation}
  \Law_{\widetilde\Q^N}
  \bigl(
    (X^1,Y^1),\ldots,(X^N,Y^N)
  \bigr)
  =
  \mathbf q_T^{\otimes N}.
  \label{eq:eta-product-law-under-tilde}
\end{equation}
For probability measures \(\mu,\nu\),  we define the relative entropy by
\[
  \Ent(\mu\mid\nu)
  :=
  \int
  \log\left(\frac{\d\mu}{\d\nu}\right)\d\mu
\]
when \(\mu\ll\nu\), and set the entropy equal to \(+\infty\) otherwise.
Set
\begin{equation}
  \Lambda_L
  :=
  \sum_{k=1}^m
  \|A^{(k)}\|^2.
  \label{eq:eta-LambdaL-def}
\end{equation}
Then we have the following control.
\begin{proposition}
\label{prop:eta-entropy}
For every \(T>0\),  it holds that
\begin{equation}
\begin{aligned}
  \frac1N
  \Ent\left(
    \mathbf P_{N,T}
    \mid
    \mathbf q_T^{\otimes N}
  \right)
  &\le
  \frac{\eta \Lambda_L}{1-\eta}
  \int_0^T
  \E_{\Q^{N,\mathrm{full}}}
  \left[
    \left\|
      R_t^{N:(1)}-X_t^1
    \right\|_1
  \right]\d t\le
  C_{\eta,T}N^{-1/4},
\end{aligned}
  \label{eq:eta-entropy-bound}
\end{equation}
for some $C_{\eta, T}>0$.
\end{proposition}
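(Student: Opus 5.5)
The plan is to compute the relative entropy via the Girsanov density \eqref{eq:eta-compensating-density}, using the data-processing inequality for the map $\omega\mapsto((X^j,Y^j))_{j\le N}$.

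First, by \eqref{eq:eta-product-law-under-tilde}, $\mathbf q_T^{\otimes N}$ is the push-forward of $\widetilde\Q^N$, and $\mathbf P_{N,T}$ is the push-forward of $\Q^{N,\mathrm{full}}$ under the same measurable map. Relative entropy does not increase under push-forward, so
\[
  \Ent\left(\mathbf P_{N,T}\mid\mathbf q_T^{\otimes N}\right)
  \le
  \Ent\left(\Q^{N,\mathrm{full}}|_{\cF_T}\mid\widetilde\Q^N|_{\cF_T}\right)
  =
  \E_{\Q^{N,\mathrm{full}}}\left[-\log\frac{\d\widetilde\Q^N}{\d\Q^{N,\mathrm{full}}}\right].
\]
The $B$-stochastic integrals have bounded integrands and are therefore $\Q^{N,\mathrm{full}}$-martingales with zero mean. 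The right-hand side thus equals
\[
  \frac12\sum_{k,j,a}\E_{\Q^{N,\mathrm{full}}}\int_0^T|u_s^{k,j,a}|^2\,\d s
  =
  \frac12\left(\eta+\frac{\eta^2}{1-\eta}\right)\sum_{k,j}\E\int_0^T|\Delta_s^{k,j}|^2\,\d s
  =
  \frac{\eta}{2(1-\eta)}\sum_{k,j}\E\int_0^T|\Delta_s^{k,j}|^2\,\d s .
\]

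Second, I bound $\Delta$. By \eqref{eq:eta-bNj-def}, $\Delta_s^{k,j}=\Tr\bigl(A^{(k)}(R_s^{N:(j)}-X_s^j)\bigr)$, hence $|\Delta_s^{k,j}|\le\|A^{(k)}\|\,\|R_s^{N:(j)}-X_s^j\|_1$. Since both $R_s^{N:(j)}$ and $X_s^j$ are states, this trace norm is at most $2$. Therefore
\[
  |\Delta_s^{k,j}|^2\le 2\|A^{(k)}\|^2\|R_s^{N:(j)}-X_s^j\|_1 ,
\]
and summing over $k$ produces $2\Lambda_L$. By Lemma~\ref{lem:eta-full-exchangeability}, the law of $(R^{N:(j)},X^j)$ does not depend on $j$: conjugation by a transposition sends $R^{N:(j)}$ to $R^{N:(1)}$ jointly with $X^j\mapsto X^1$. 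The sum over $j$ is therefore $N$ times the $j=1$ term. Dividing by $N$ gives exactly the first inequality, with constant $\frac{\eta}{2(1-\eta)}\cdot 2\Lambda_L=\frac{\eta\Lambda_L}{1-\eta}$.

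Third, Proposition~\ref{prop:eta-full-chaos} with $n=1$ bounds the integrand by $C_{T,1}^{\mathrm{full}}N^{-1/4}$ uniformly in $t\in[0,T]$. This yields $C_{\eta,T}=\frac{\eta\Lambda_L T}{1-\eta}C^{\mathrm{full}}_{T,1}$.

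The main point requiring care is the first step. The data-processing inequality needs both laws to be images of $\Q^{N,\mathrm{full}}$ and $\widetilde\Q^N$ under the same map. This holds because $X^j$ and $Y^j$ are defined pathwise on $\Omega$, and \eqref{eq:eta-product-law-under-tilde} identifies the $\widetilde\Q^N$-image. We also need the mean-zero property of the stochastic integrals under $\Q^{N,\mathrm{full}}$. The $B^{k,j,a}$ are Brownian motions under that measure and the $u$ are bounded, so this holds.
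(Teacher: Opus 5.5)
Your proposal is correct and follows essentially the same route as the paper's proof. Both arguments proceed in the same order:
\begin{enumerate}
\item data processing through the compensating measure $\widetilde\Q^N$, giving $\Ent(\mathbf P_{N,T}\mid\mathbf q_T^{\otimes N})\le\Ent(\Q^{N,\mathrm{full}}\mid\widetilde\Q^N)$;
\item the Girsanov entropy identity $\frac{\eta}{2(1-\eta)}\sum_{k,j}\E\int_0^T|\Delta_t^{k,j}|^2\,\d t$;
\item the bound $|\Delta|^2\le 2\|A^{(k)}\|^2\|R_t^{N:(1)}-X_t^1\|_1$, after exchangeability reduces the sum over $j$ to $j=1$;
\item Proposition~\ref{prop:eta-full-chaos} with $n=1$, which gives $C_{\eta,T}=\frac{\eta\Lambda_L T}{1-\eta}C^{\mathrm{full}}_{T,1}$.
\end{enumerate}
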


\begin{proof}
By data processing and \eqref{eq:eta-product-law-under-tilde},  it holds that 
\[
  \Ent\left(
    \mathbf P_{N,T}
    \mid
    \mathbf q_T^{\otimes N}
  \right)
  \le
  \Ent\left(
    \Q^{N,\mathrm{full}}
    \mid
    \widetilde\Q^N
  \right).
\]
Girsanov's entropy identity gives
\[
\begin{aligned}
  \Ent\left(
    \Q^{N,\mathrm{full}}
    \mid
    \widetilde\Q^N
  \right)
  =
  \frac12
  \sum_{k=1}^m\sum_{j=1}^N
  \E_{\Q^{N,\mathrm{full}}}
  \int_0^T
  \left(
    |u_t^{k,j,1}|^2+|u_t^{k,j,2}|^2
  \right)\d t
  =
  \frac{\eta}{2(1-\eta)}
  \sum_{k=1}^m\sum_{j=1}^N
  \E_{\Q^{N,\mathrm{full}}}
  \int_0^T
  |\Delta_t^{k,j}|^2\,\d t.
\end{aligned}
\]
By Lemma~\ref{lem:eta-full-exchangeability}, division by \(N\) reduces the sum over \(j\) to \(j=1\).  Moreover,
\[
  |\Delta_t^{k,1}|^2
  \le
  \|A^{(k)}\|^2
  \left\|
    R_t^{N:(1)}-X_t^1
  \right\|_1^2  \le
  2\|A^{(k)}\|^2
  \left\|
    R_t^{N:(1)}-X_t^1
  \right\|_1,  
\]
since the trace distance between density matrices is at most two.  
Summing over \(k\) proves the first inequality in
\eqref{eq:eta-entropy-bound}.  The second follows from
Proposition~\ref{prop:eta-full-chaos} with \(n=1\).
\end{proof}

We next recall two standard entropy facts.  If \(\mu\) and \(\nu\) are
probability measures on a product \(E\times F\) of Polish spaces, then
\begin{equation}
  \Ent(\mu\mid\nu)
  =
  \Ent(\mu_F\mid\nu_F)
  +
  \int_F
  \Ent\left(
    \mu^{E\mid F=y}
    \mid
    \nu^{E\mid F=y}
  \right)\mu_F(\d y).
  \label{eq:eta-entropy-chain-rule}
\end{equation}
Furthermore, if \(\mu\) is a probability measure on \(E^N\),
\(\nu_1,\ldots,\nu_N\) are probability measures on \(E\), and
\(I_1,\ldots,I_r\subset[N]\) are pairwise disjoint, then
\begin{equation}
  \sum_{\ell=1}^r
  \Ent\left(
    \mu_{I_\ell}
    \mathrel{\bigg|}
    \bigotimes_{j\in I_\ell}\nu_j
  \right) \le \Ent\left(
    \mu
    \mathrel{\bigg|}
    \bigotimes_{j=1}^N\nu_j
  \right).
  \label{eq:eta-block-superadditivity}
\end{equation}
The latter follows by applying the entropy chain rule successively to the
blocks and discarding the nonnegative mutual-information terms.

For \(t\le T\), let \(\mathbf P_{N,t}\) and \(\mathbf q_t\) be the
restrictions of \(\mathbf P_{N,T}\) and \(\mathbf q_T\) to \([0,t]\).
Let \(\mathbf P_{N,t}^{X\mid Y}\) be a regular conditional law of
\((X^1,\ldots,X^N)\) given \((Y^1,\ldots,Y^N)\), and let
\(\mathbf q_t^{X\mid Y=y}\) be a regular conditional law under
\(\mathbf q_t\).  The chain rule and monotonicity of entropy under
restriction lead to
\begin{equation}
\begin{aligned}
  &\E_{\mathbf P_{N,t}^Y}
  \Ent\left(
    \mathbf P_{N,t}^{X\mid Y}
    \mathrel{\bigg|}
    \bigotimes_{j=1}^N
    \mathbf q_t^{X\mid Y=Y^j}
  \right)\le
  \Ent\left(
    \mathbf P_{N,T}
    \mid
    \mathbf q_T^{\otimes N}
  \right).
\end{aligned}
  \label{eq:eta-total-conditional-entropy}
\end{equation}
Fix \(n\ge1\), set \(r_N:=\lfloor N/n\rfloor\), and partition the first \(nr_N\) coordinates into \(r_N\) consecutive blocks of size \(n\).  Applying \eqref{eq:eta-block-superadditivity} to the conditional law \(\mathbf P_{N,t}^{X\mid Y=y^{1:N}}\), with reference measure \(\bigotimes_{j=1}^N\mathbf q_t^{X\mid Y=y^j}\), and then integrating with respect to the \(Y\)-marginal \(\mathbf P_{N,t}^Y\),  i.e.,  \(\mathbf P_{N,t}^Y\) denotes the law of \((Y^1,\ldots,Y^N)\) under \(\mathbf P_{N,t}\),  gives a sum of \(r_N\) block entropies.  The joint law \(\mathbf P_{N,t}\) is invariant under simultaneous permutations of the pairs \((X^j,Y^j)\).  Hence,  the expectations of these block entropies are equal.  Therefore,
\begin{equation}
\begin{aligned}
  &\E_{\mathbf P_{N,t}^Y}
  \Ent\left(
    \mathbf P_{N,t}^{X^{1:n}\mid Y^{1:N}}
    \mathrel{\bigg|}
    \bigotimes_{j=1}^n
    \mathbf q_t^{X\mid Y=Y^j}
  \right) \le
  \frac1{r_N}
  \Ent\left(
    \mathbf P_{N,T}
    \mid
    \mathbf q_T^{\otimes N}
  \right).
\end{aligned}
  \label{eq:eta-block-conditional-entropy}
\end{equation}
Define the conditional barycenter
\begin{equation}
  \overline\gamma_t(y)
  :=
  \int_{\mathsf X_t}
  x_t\,
  \mathbf q_t^{X\mid Y=y}(\d x)
  \label{eq:eta-bar-gamma-barycenter}
\end{equation}
and
\begin{equation}
  \overline\Gamma_t^{N,n}
  :=
  \overline\gamma_t(Y^1)
  \otimes\cdots\otimes
  \overline\gamma_t(Y^n).
  \label{eq:eta-bar-Gamma-def}
\end{equation}
The conditional kernels may be chosen arbitrarily on a
\(\mathbf q_t^Y\)-null set.  This causes no ambiguity here because
\(\mathbf P_{N,t}^Y\ll(\mathbf q_t^Y)^{\otimes N}\), as follows from the
finite entropy in Proposition~\ref{prop:eta-entropy}.  A canonical version
will be selected in Lemma~\ref{lem:eta-barycenter-identification} below.

\begin{proposition}
\label{prop:eta-conditional-factorization}
For every fixed \(n\ge1\) and \(T>0\),  there exists $C_{\eta,T,n}>0$ so that
\begin{equation}
\begin{aligned}
  \sup_{0\le t\le T}
  \E_{\Q^{N,\eta}}
  \left[
    \left\|
      \Theta_t^{N,n}-\overline\Gamma_t^{N,n}
    \right\|_1
  \right]\le
  \left[
    \frac{2\eta N \Lambda_L}{(1-\eta)r_N}
    \int_0^T
    \E_{\Q^{N,\mathrm{full}}}
    \left[
      \left\|
        R_s^{N:(1)}-X_s^1
      \right\|_1
    \right]\d s
  \right]^{1/2}
\le
  C_{\eta,T,n}N^{-1/8}.
\end{aligned}
  \label{eq:eta-conditional-factorization-bound}
\end{equation}
\end{proposition}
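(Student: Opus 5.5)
The plan is to express both $\Theta_t^{N,n}$ and $\overline\Gamma_t^{N,n}$ as barycenters, under two different conditional laws given the observation paths, of the same bounded functional $x^{1:n}\mapsto x_t^1\otimes\cdots\otimes x_t^n$. Their difference is then controlled by the total variation distance between these conditional laws. Pinsker's inequality converts that distance into the conditional block entropy bounded in \eqref{eq:eta-block-conditional-entropy}.

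\emph{Step 1: representation.} Since $\cY_t^N$ is generated by $(Y^1,\dots,Y^N)$ on $[0,t]$, we have $\Theta_t^{N,n}=\int x_t^1\otimes\cdots\otimes x_t^n\,\mathbf P_{N,t}^{X^{1:n}\mid Y^{1:N}}(\d x)$ evaluated at the observed path. The functional is continuous on $\mathsf X_t^n$, and restricting to $[0,t]$ is legitimate because $X^j_t$ depends only on the path up to $t$. By definition, $\overline\Gamma_t^{N,n}$ is the integral of the same functional against $\bigotimes_{j\le n}\mathbf q_t^{X\mid Y=Y^j}$. Here we use $\mathbf P^Y_{N,t}\ll(\mathbf q_t^Y)^{\otimes N}$ so that the kernels are well defined $\mathbf P^Y_{N,t}$-a.s.

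\emph{Step 2: total variation.} For density matrices, $\|x^1\otimes\cdots\otimes x^n\|_1=1$. By trace duality against $A$ with $\|A\|\le1$, we get $\|\int F\,\d\mu-\int F\,\d\nu\|_1\le 2\,\|\mu-\nu\|_{TV}$ whenever $F$ takes values in product states. Pinsker's inequality gives $2\|\mu-\nu\|_{TV}\le\sqrt{2\Ent(\mu\mid\nu)}$. Since $\Q^{N,\eta}$ coincides with $\Q^{N,\mathrm{full}}$ on $\cY_t^N$, the expectation is taken under the $Y$-law $\mathbf P_{N,t}^Y$. Jensen's inequality (concavity of the square root) then yields
\[
\E_{\Q^{N,\eta}}\bigl\|\Theta_t^{N,n}-\overline\Gamma_t^{N,n}\bigr\|_1\le\Bigl(2\,\E_{\mathbf P^Y_{N,t}}\Ent\bigl(\mathbf P_{N,t}^{X^{1:n}\mid Y^{1:N}}\bigm|\textstyle\bigotimes_{j\le n}\mathbf q_t^{X\mid Y=Y^j}\bigr)\Bigr)^{1/2}.
\]

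\emph{Step 3: entropy input.} Apply \eqref{eq:eta-block-conditional-entropy}, and then the first inequality of Proposition~\ref{prop:eta-entropy}, which bounds $\Ent(\mathbf P_{N,T}\mid\mathbf q_T^{\otimes N})$ by $N\eta\Lambda_L(1-\eta)^{-1}\int_0^T\E\|R^{N:(1)}_s-X^1_s\|_1\,\d s$. This produces exactly the middle expression of \eqref{eq:eta-conditional-factorization-bound}, uniformly in $t\le T$. For the final rate, use $N/r_N\le 2n$ for $N\ge n$ (the case $N<n$ is absorbed into the constant, since the left side is at most $2$). Proposition~\ref{prop:eta-full-chaos} with $n=1$ bounds the integral by $T C^{\mathrm{full}}_{T,1}N^{-1/4}$, and taking the square root gives $N^{-1/8}$.

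\emph{Main obstacle.} The delicate point is Step 1, and in particular the justification of \eqref{eq:eta-block-conditional-entropy} that feeds Step 3. One must check that the conditional law of $X^{1:n}$ given the \emph{full} observation $Y^{1:N}$ is compared with the product of single-particle kernels $\mathbf q_t^{X\mid Y=Y^j}$. This relies on the product reference measure $\bigotimes_j\mathbf q_t^{X\mid Y=y^j}$ being a version of the conditional law of $\mathbf q_t^{\otimes N}$ given all $Y$'s, which holds because the pairs are independent under the product. I would verify this via the chain rule \eqref{eq:eta-entropy-chain-rule} applied with $F$ the $Y$-path space. The monotonicity under restriction from $[0,T]$ to $[0,t]$ also needs care, which I would handle via data processing for the joint law before conditioning.
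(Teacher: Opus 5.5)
Your proposal is correct and follows the paper's proof essentially step for step. Both arguments write $\Theta_t^{N,n}$ and $\overline\Gamma_t^{N,n}$ as barycenters of the product-state evaluation map under $\mathbf P_{N,t}^{X^{1:n}\mid Y^{1:N}}$ and $\bigotimes_{j\le n}\mathbf q_t^{X\mid Y=Y^j}$, bound the trace-norm difference by $2d_{\mathrm{TV}}$ via trace duality, apply Pinsker and then Jensen, and feed in \eqref{eq:eta-block-conditional-entropy}, Proposition~\ref{prop:eta-entropy}, and Proposition~\ref{prop:eta-full-chaos} with $n=1$; your explicit bound $N/r_N\le 2n$ for $N\ge n$ just makes precise the paper's remark that $N/r_N$ stays bounded.
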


\begin{proof}
Conditionally on \(Y^{1:N}\), Pinsker's inequality and trace duality give
\[
\begin{aligned}
  \left\|
    \Theta_t^{N,n}-\overline\Gamma_t^{N,n}
  \right\|_1
  \le
  2d_{\mathrm{TV}}
  \left(
    \mathbf P_{N,t}^{X^{1:n}\mid Y^{1:N}},
    \bigotimes_{j=1}^n
    \mathbf q_t^{X\mid Y=Y^j}
  \right)\le
  \left[
    2
    \Ent\left(
      \mathbf P_{N,t}^{X^{1:n}\mid Y^{1:N}}
      \mathrel{\bigg|}
      \bigotimes_{j=1}^n
      \mathbf q_t^{X\mid Y=Y^j}
    \right)
  \right]^{1/2}.
\end{aligned}
\]
Indeed, the evaluation map
\[
  (x^1,\ldots,x^n)
  \longmapsto
  x_t^1\otimes\cdots\otimes x_t^n
\]
takes values in density matrices and therefore has trace norm one.  Taking 
expectations, using Jensen's inequality, and then applying
\eqref{eq:eta-block-conditional-entropy} and
Proposition~\ref{prop:eta-entropy} proves the first bound in
\eqref{eq:eta-conditional-factorization-bound}.  The expectations under
\(\Q^{N,\eta}\) and \(\Q^{N,\mathrm{full}}\) agree because the random
variables are \(\cY_t^N\)-measurable.  Finally,
\(N/r_N\) remains bounded for fixed \(n\), and
Proposition~\ref{prop:eta-full-chaos} gives the stated rate.
\end{proof}

Combining \eqref{eq:eta-first-term-bound} with
Proposition~\ref{prop:eta-conditional-factorization}, we obtain
\begin{equation}
  \sup_{0\le t\le T}
  \E_{\Q^{N,\eta}}
  \left[
    \left\|
      \rho_t^{N:n,\eta}-\overline\Gamma_t^{N,n}
    \right\|_1
  \right]
  \le
  C_{\eta,T,n}N^{-1/8}.
  \label{eq:eta-intermediate-chaos}
\end{equation}

\subsection{Identification of the limiting filter and conclusion}
\label{subsec:eta-return-to-target}

We now select a canonical version of the conditional barycenter.  Let
\(\mathbb W_T^m\) denote Wiener measure on \(\mathsf Y_T\), and let
\(Y=(Y^1,\ldots,Y^m)\) be the coordinate process.  On the canonical Wiener
space, consider the linear equation
\begin{equation}
\begin{aligned}
  \d\widehat{\overline\gamma}_t
  =
  \mathfrak l_t(\widehat{\overline\gamma}_t)\,\d t
  +
  \sqrt{\eta}
  \sum_{k=1}^m
  \cC_{L^{(k)}}(\widehat{\overline\gamma}_t)\,\d Y_t^k,\qquad 
  \widehat{\overline\gamma}_0
  =
  \gamma_0,
\end{aligned}
  \label{eq:eta-local-linear-SDE}
\end{equation}
and set
\begin{equation}
  \Pi_t(Y)
  :=
  \frac{
    \widehat{\overline\gamma}_t
  }{
    \Tr(\widehat{\overline\gamma}_t)
  }.
  \label{eq:eta-Pi-def}
\end{equation}
Pathwise uniqueness gives a progressively measurable solution map
\(Y\longmapsto\Pi(Y)\), defined \(\mathbb W_T^m\)-almost surely.  We fix an
arbitrary Borel extension to all of \(\mathsf Y_T\).

\begin{lemma}
\label{lem:eta-barycenter-identification}
Let \(\mathbf q_T^Y\) denote the \(\mathsf Y_T\)-marginal of
\(\mathbf q_T\). Then \(\mathbf q_T^Y\) is equivalent to
\(\mathbb W_T^m\), and the version in
\eqref{eq:eta-bar-gamma-barycenter} may be chosen so that
\begin{equation}
  \overline\gamma_t(Y)
  =
  \Pi_t(Y)
  =
  \E_{\mathbf q_T}
  \left[
    X_t\mid\sigma(Y_s:0\le s\le t)
  \right].
  \label{eq:eta-barycenter-identification}
\end{equation}
Moreover, under Wiener measure, and hence under every equivalent measure,
\(\Pi\) satisfies
\begin{equation}
\begin{aligned}
  \d\Pi_t
  =
  \mathfrak l_t(\Pi_t)\,\d t+
  \sqrt{\eta}
  \sum_{k=1}^m
  \cE_{L^{(k)}}(\Pi_t)
  \left(
    \d Y_t^k
    -
    \sqrt{\eta}\,b^{(k)}(\Pi_t)\,\d t
  \right).
\end{aligned}
  \label{eq:eta-local-observation-SDE}
\end{equation}
\end{lemma}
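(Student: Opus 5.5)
The plan is to realize the one-particle pair \((X,Y)\) of \eqref{eq:eta-ideal-pair} through the same reference-measure construction as in Subsection~\ref{subsec:eta-dilation-projection}, taking \(N=1\) and the deterministic, time-dependent linear drift \(\mathfrak l_t\) in place of \(\mathfrak L_N\). The identification of the barycenter will then follow from the projection argument of Proposition~\ref{prop:eta-conditional-reduction} together with Bayes' formula.

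\emph{Step 1: reference construction.} On a space carrying independent \(\Q\)-Brownian motions \(Y=(Y^k)\) and \(U=(U^k)\), I solve
\[
  \d\widehat X_t=\mathfrak l_t(\widehat X_t)\,\d t+\sqrt{\eta}\sum_{k}\cC_{L^{(k)}}(\widehat X_t)\,\d Y_t^k+\sqrt{1-\eta}\sum_k\cC_{L^{(k)}}(\widehat X_t)\,\d U_t^k,
  \qquad \widehat X_0=\gamma_0.
\]
Exactly as in Lemma~\ref{lem:full-linear-reference}, \(\widehat X_t=\mathscr U_t\gamma_0\mathscr U_t^*\ge0\). Moreover \(M_t:=\Tr(\widehat X_t)\) is a strictly positive martingale with bounded log-drift coefficients \(b^{(k)}(\widehat X_t/M_t)\). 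Set \(\d\Q^{\mathrm{full}}:=M_T\,\d\Q\) and \(X:=\widehat X/M\). Girsanov's theorem turns \(\d Y^k-\sqrt\eta\, b^{(k)}(X)\,\d t\) and \(\d U^k-\sqrt{1-\eta}\,b^{(k)}(X)\,\d t\) into independent Brownian motions \(\beta^{k,1},\beta^{k,2}\). Itô's formula for the quotient then shows that \((X,Y)\) solves \eqref{eq:eta-ideal-pair}. By pathwise uniqueness, \(\mathrm{Law}_{\Q^{\mathrm{full}}}(X,Y)=\mathbf q_T\).

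\emph{Step 2: equivalence and the filter.} Under \(\Q\), \(Y\) is a standard Brownian motion and is independent of \(U\). The projection identities \eqref{eq:eta-U-projection}--\eqref{eq:eta-Y-projection}, now with the filtration of \(Y\) alone, give
\[
  \E_\Q[\widehat X_t\mid\sigma(Y_s:s\le t)]=\widehat{\overline\gamma}_t(Y),
\]
the solution of \eqref{eq:eta-local-linear-SDE}. Taking the trace gives \(\E_\Q[M_T\mid\sigma(Y_s:s\le T)]=\Tr(\widehat{\overline\gamma}_T)\). This quantity is strictly positive, being the conditional expectation of a positive variable, so \(\mathbf q_T^Y\) has a strictly positive density with respect to \(\mathbb W_T^m\). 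Hence the two measures are equivalent.

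\emph{Step 3: barycenter identification.} The key point is that \(\mathbf q_t\) is the restriction to \([0,t]\). Consequently \(\mathbf q_t^{X\mid Y=y}\) conditions only on the path \(Y|_{[0,t]}\), so its barycenter is \(\E_{\mathbf q_T}[X_t\mid\sigma(Y_s:s\le t)]\), a filter rather than a smoother. Bayes' formula with the \(\Q\)-martingale \(M\) (replacing \(M_T\) by \(M_t\) as in the proof of Proposition~\ref{prop:eta-conditional-reduction}) gives
\[
  \E_{\Q^{\mathrm{full}}}[X_t\mid\sigma(Y_s:s\le t)]
  =\frac{\E_\Q[\widehat X_t\mid\sigma(Y_s:s\le t)]}{\E_\Q[M_t\mid\sigma(Y_s:s\le t)]}
  =\Pi_t(Y).
\]
Since \(\mathbf q_T^Y\sim\mathbb W_T^m\), this identity holds \(\mathbf q_T^Y\)-a.s. and does not depend on the Borel extension of \(\Pi\). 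We therefore choose the kernel version accordingly. Finally, \(Z_t:=\Tr(\widehat{\overline\gamma}_t)\) satisfies \(\d Z_t=\sqrt\eta\, Z_t\sum_k b^{(k)}(\Pi_t)\,\d Y_t^k\). Itô's formula for \(\widehat{\overline\gamma}_t/Z_t\) under Wiener measure then yields \eqref{eq:eta-local-observation-SDE}. Because this is a pathwise SDE identity, it persists under every equivalent measure.

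The step I expect to be most delicate is Step~3. It requires keeping the conditioning on \([0,t]\) rather than \([0,T]\), and it requires justifying that regular conditional laws, chosen arbitrarily on null sets, agree with the canonical \(\Pi\). Both points are handled by the equivalence proved in Step~2 and by the progressive measurability of the solution map \(Y\mapsto\Pi(Y)\).
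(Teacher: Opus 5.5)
Your proposal is correct and follows essentially the same route as the paper: the one-particle reference construction with density $M_T=\Tr(\widehat R_T)$, projection of the full linear equation onto $\sigma(Y_s:s\le t)$, and Bayes' formula yielding $\Pi_t(Y)$. The remaining steps also match: the density $\d\mathbf q_T^Y/\d\mathbb W_T^m=\Tr(\widehat{\overline\gamma}_T)>0$ gives equivalence, and It\^o's formula for the normalized filter gives the SDE. Your extra checks make explicit what the paper only asserts, namely the quotient/Girsanov computation giving $\Law_{\Q^{\mathrm{full}}}(X,Y)=\mathbf q_T$ and the remark that $\mathbf q_t$ conditions only on $Y|_{[0,t]}$.
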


\begin{proof}
Apply the one-particle version of the construction in
Proposition~\ref{prop:eta-conditional-reduction}.  More explicitly, on a
reference space carrying independent Brownian motions \(Y^k,U^k\), let
\(\widehat R\) solve the full linear equation obtained from
\eqref{eq:eta-full-linear-N} by replacing \(\mathfrak L_N\) with
\(\mathfrak l_t\).  Set
\[
  M_t:=\Tr(\widehat R_t),
  \qquad
  R_t:=\frac{\widehat R_t}{M_t},
  \qquad
  \frac{\d\Q^{\mathrm{full}}}{\d\Q}\bigg|_{\cF_T}:=M_T .
\]
This is the normalization and change of measure by the trace.  As in
Proposition~\ref{prop:eta-conditional-reduction}, \(M\) is a strictly positive
martingale, and under \(\Q^{\mathrm{full}}\) the pair \((R,Y)\) has law
\(\mathbf q_T\).

Let
\(
  \cY_t:=\sigma(Y_s:0\le s\le t).
\)
Projecting the full linear equation onto \(\cY_t\) gives
\(
  \E_{\Q}[\widehat R_t\mid\cY_t]
  =
  \widehat{\overline\gamma}_t,
\)
where \(\widehat{\overline\gamma}\) is the solution of
\eqref{eq:eta-local-linear-SDE}.  Bayes' formula then yields
\[
\begin{aligned}
  \E_{\mathbf q_T}[X_t\mid\cY_t]
  &=
  \E_{\Q^{\mathrm{full}}}[R_t\mid\cY_t]  =
  \frac{
    \E_{\Q}[M_T R_t\mid\cY_t]
  }{
    \E_{\Q}[M_T\mid\cY_t]
  }
  =
  \frac{
    \E_{\Q}[\widehat R_t\mid\cY_t]
  }{
    \Tr\left(\E_{\Q}[\widehat R_t\mid\cY_t]\right)
  }
  =
  \frac{\widehat{\overline\gamma}_t}
       {\Tr(\widehat{\overline\gamma}_t)}
  =
  \Pi_t(Y).
\end{aligned}
\]
This proves \eqref{eq:eta-barycenter-identification}.  It remains to identify the observation law.  For every bounded measurable
function \(F\) on \(\mathsf Y_T\),
\[
\begin{aligned}
  \int F(y)\,\mathbf q_T^Y(\d y)
  &=
  \E_{\Q^{\mathrm{full}}}[F(Y)]
  =
  \E_{\Q}[F(Y)M_T]  =
  \E_{\Q}
  \left[
    F(Y)\E_{\Q}[M_T\mid\cY_T]
  \right]
  =
  \E_{\Q}
  \left[
    F(Y)\Tr(\widehat{\overline\gamma}_T)
  \right].
\end{aligned}
\]
Since \(Y\) has law \(\mathbb W_T^m\) under \(\Q\), we obtain
\[
  \frac{\d\mathbf q_T^Y}{\d\mathbb W_T^m}(Y)
  =
  \Tr(\widehat{\overline\gamma}_T).
\]
The density is strictly positive, because it is the conditional expectation of
the strictly positive random variable \(M_T\).  Hence
\(\mathbf q_T^Y\sim\mathbb W_T^m\).  Finally, applying It\^o's formula to
\(
  \Pi_t
\)
in \eqref{eq:eta-local-linear-SDE} gives
\eqref{eq:eta-local-observation-SDE}.
\end{proof}

On \((\Omega,\cY_T^N,\Q^{N,\eta})\), define
\begin{equation}
  \overline\gamma_t^{N,j}
  :=
  \Pi_t(Y^j),
  \qquad
  Y^j=(Y^{1,j},\ldots,Y^{m,j}).
  \label{eq:eta-bar-gamma-N-def}
\end{equation}
Since \(\Q^{N,\eta}\) is equivalent to \(\Q\) on \(\cY_T^N\), each
\(Y^j\)-law is equivalent to Wiener measure.  Hence
Lemma~\ref{lem:eta-barycenter-identification} gives
\begin{equation}
  \overline\Gamma_t^{N,n}
  =
  \overline\gamma_t^{N,1}
  \otimes\cdots\otimes
  \overline\gamma_t^{N,n}.
  \label{eq:eta-bar-Gamma-product}
\end{equation}
Using \eqref{eq:eta-WN-def} in
\eqref{eq:eta-local-observation-SDE}, we obtain, under
\(\Q^{N,\eta}\),
\begin{equation}
\begin{aligned}
  \d\overline\gamma_t^{N,j}
  &=
  \mathfrak l_t(\overline\gamma_t^{N,j})\,\d t
  +
  \sqrt{\eta}
  \sum_{k=1}^m
  \cE_{L^{(k)}}(\overline\gamma_t^{N,j})\,\d W_t^{N,k,j}
  +
  \eta
  \sum_{k=1}^m
  \cE_{L^{(k)}}(\overline\gamma_t^{N,j})
  \left(
    b_j^{N,k}(\rho_t^{N,\eta})
    -
    b^{(k)}(\overline\gamma_t^{N,j})
  \right)\d t.
\end{aligned}
  \label{eq:eta-bar-gamma-under-Qeta}
\end{equation}
For \(j=1,\ldots,N\), let \(\gamma^{N,\eta,j}\) solve, on the same
probability space and with the same Brownian motions,
\begin{equation}
\begin{aligned}
  \d\gamma_t^{N,\eta,j}
  =
  \mathfrak l_t(\gamma_t^{N,\eta,j})\,\d t
  +
  \sqrt{\eta}
  \sum_{k=1}^m
  \cE_{L^{(k)}}(\gamma_t^{N,\eta,j})\,\d W_t^{N,k,j},\qquad 
  \gamma_0^{N,\eta,j}
  =
  \gamma_0.
\end{aligned}
  \label{eq:eta-MF-target}
\end{equation}
The processes \(\gamma^{N,\eta,1},\ldots,\gamma^{N,\eta,N}\) are independent
and identically distributed under \(\Q^{N,\eta}\).  Taking expectations in
\eqref{eq:eta-MF-target} and using uniqueness for
\eqref{eq:mean-state-equation} gives
\[
  \E_{\Q^{N,\eta}}[\gamma_t^{N,\eta,j}]
  =
  \xi_t,
\]
so \eqref{eq:eta-MF-target} is precisely the limiting equation
\eqref{eq:MF-Belavkin}.

\begin{lemma}
\label{lem:eta-projected-exchangeability}
Under $\Q^{N,\eta}$, the family
\[
  \bigl(
    \rho^{N,\eta},
    (\overline\gamma^{N,j},\gamma^{N,\eta,j},Y^j)_{j=1}^N
  \bigr)
\]
is exchangeable.
\end{lemma}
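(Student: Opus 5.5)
The plan is to express every member of the family as a measurable functional of the observation paths \(Y^{1},\ldots,Y^N\), to show that these functionals commute with relabelling, and then to transfer the exchangeability from Lemma~\ref{lem:eta-full-exchangeability}.

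\emph{Step 1: the normalized filter.} The unnormalized filter \(\widehat\rho^{N,\eta}\) solves the linear equation \eqref{eq:eta-one-channel-linear}, which is driven only by \(Y\). Pathwise uniqueness therefore gives a progressively measurable solution map \(\widehat\rho^{N,\eta}=\Phi^N(Y^1,\ldots,Y^N)\), and hence \(\rho^{N,\eta}=\Phi^N/\Tr(\Phi^N)\).

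We then check equivariance. If \(\widehat\rho\) solves \eqref{eq:eta-one-channel-linear} driven by \((Y^{j})_j\), then \(U_\pi\widehat\rho\,U_\pi^*\) solves the same equation driven by \((Y^{\pi^{-1}(j)})_j\). Three facts make this work:
\begin{itemize}
\item \(H^N\) commutes with \(U_\pi\);
\item \(U_\pi L_j^{(k)}U_\pi^*=L_{\pi(j)}^{(k)}\), so \(\mathfrak L_N\) is conjugation invariant and the stochastic sum is only relabelled;
\item the initial datum \(\gamma_0^{\otimes N}\) is invariant.
\end{itemize}
Uniqueness then gives
\[
\Phi^N\bigl(Y^{\pi^{-1}(1)},\ldots,Y^{\pi^{-1}(N)}\bigr)=U_\pi\Phi^N(Y^1,\ldots,Y^N)U_\pi^* .
\]

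\emph{Step 2: the one-particle processes.} By definition \(\overline\gamma^{N,j}=\Pi(Y^j)\) for a fixed Borel map \(\Pi\). The same structure holds for \(\gamma^{N,\eta,j}\). The innovation \(W^{N,k,j}\) in \eqref{eq:eta-WN-def} equals \(Y^{k,j}-\sqrt\eta\int_0^\cdot b_j^{N,k}(\rho^{N,\eta}_s)\,\d s\), and since \(b^{N,k}_{\pi(j)}(U_\pi\rho U_\pi^*)=b^{N,k}_j(\rho)\), the innovation family is relabelled equivariantly. Equation \eqref{eq:eta-MF-target} has deterministic drift \(\mathfrak l_t\), because \(\xi\) is deterministic, and it is driven only by \(W^{N,\cdot,j}\). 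Pathwise uniqueness (Yamada--Watanabe) therefore gives \(\gamma^{N,\eta,j}=G(W^{N,\cdot,j})\) for one fixed measurable map \(G\) that does not depend on \(j\).

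Combining Steps 1 and 2, the whole family equals \(\mathcal F(Y^1,\ldots,Y^N)\) for a measurable \(\mathcal F\) that satisfies \(\mathcal F\circ\sigma_\pi=\tau_\pi\circ\mathcal F\). Here \(\sigma_\pi\) permutes the \(Y\)-coordinates, and \(\tau_\pi\) conjugates the state and permutes the one-particle labels.

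\emph{Step 3: invariance of the law of \(Y\).} Under \(\Q^{N,\eta}\), which coincides with \(\Q^{N,\mathrm{full}}\) on \(\cY_T^N\) by Proposition~\ref{prop:eta-conditional-reduction}, the law of \((Y^1,\ldots,Y^N)\) is permutation invariant. This follows from Lemma~\ref{lem:eta-full-exchangeability}. Applying \(\mathcal F\) then gives the claimed exchangeability.

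\emph{Main obstacle.} The delicate point is that the solution maps are defined only almost surely, a priori Wiener-almost surely. We need the equivariance identities to hold \(\Q^{N,\eta}\)-almost surely, and the composed laws must not depend on the choice of versions. This is resolved by the equivalence \(\Q^{N,\eta}\sim\Q\) on \(\cY^N_T\), under which \(Y\) is an \(mN\)-dimensional Brownian motion whose law is itself permutation invariant. The identities can therefore be established \(\Q\)-almost surely and then transferred to \(\Q^{N,\eta}\).
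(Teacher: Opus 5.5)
Your proposal is correct and follows the paper's route: the exchangeability of $(\rho^{N,\eta},(Y^j)_j)$ is transferred from Lemma~\ref{lem:eta-full-exchangeability} through $\Q^{N,\eta}=\Q^{N,\mathrm{full}}|_{\cY_T^N}$, the map $\Pi$ is the same for every label, and the limiting equations are driven by the correspondingly permuted innovations. The differences are minor: you get equivariance of $\rho^{N,\eta}$ from pathwise uniqueness for the linear filter \eqref{eq:eta-one-channel-linear} rather than from the conditional-expectation formula \eqref{eq:eta-normalized-conditional}, and by writing the whole family as one equivariant Borel functional of $Y$ you make the joint-law and null-set bookkeeping explicit, which the paper compresses into ``uniqueness in law''.
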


\begin{proof}
The conditional expectation in
\eqref{eq:eta-normalized-conditional} and the normalization by
$Z^{N,\eta}$ commute with simultaneous permutations of the particle labels.
Moreover, the density $Z_T^{N,\eta}$ is permutation invariant.  Hence
Lemma~\ref{lem:eta-full-exchangeability} implies exchangeability of
$(\rho^{N,\eta},(Y^j)_{j=1}^N)$ under $\Q^{N,\eta}$.  The map
$Y^j\longmapsto\overline\gamma^{N,j}=\Pi(Y^j)$ is the same for every particle,
and the equations \eqref{eq:eta-MF-target} are driven by the correspondingly
permuted innovation processes with identical initial data.  Uniqueness in
law completes the proof.
\end{proof}

\begin{lemma}
\label{lem:eta-local-comparison}
For every \(T>0\), there exists \(C_{\eta,T}<\infty\), independent of
\(N\) and \(j\), such that
\begin{equation}
\begin{aligned}
  &\E_{\Q^{N,\eta}}
  \left[
    \sup_{0\le t\le T}
    \left\|
      \overline\gamma_t^{N,j}
      -
      \gamma_t^{N,\eta,j}
    \right\|_1^2
  \right]\le
  C_{\eta,T}
  \int_0^T
  \E_{\Q^{N,\eta}}
  \left[
    \left\|
      \rho_s^{N:(j),\eta}
      -
      \overline\gamma_s^{N,j}
    \right\|_1^2
  \right]\d s.
\end{aligned}
  \label{eq:eta-local-comparison}
\end{equation}
\end{lemma}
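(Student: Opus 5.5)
We run a standard synchronous-coupling stability argument for the two one-particle SDEs \eqref{eq:eta-bar-gamma-under-Qeta} and \eqref{eq:eta-MF-target}. Both are driven by the same innovation Brownian motions \(W^{N,k,j}\) under \(\Q^{N,\eta}\) and start from \(\gamma_0\). The difference \(D_t:=\overline\gamma_t^{N,j}-\gamma_t^{N,\eta,j}\) therefore satisfies
\[
  \d D_t=\bigl(\mathfrak l_t(\overline\gamma_t^{N,j})-\mathfrak l_t(\gamma_t^{N,\eta,j})\bigr)\d t
  +\sqrt\eta\sum_{k=1}^m\bigl(\cE_{L^{(k)}}(\overline\gamma_t^{N,j})-\cE_{L^{(k)}}(\gamma_t^{N,\eta,j})\bigr)\d W_t^{N,k,j}
  +\eta\sum_{k=1}^m\cE_{L^{(k)}}(\overline\gamma_t^{N,j})\,\Delta_t^{k}\,\d t,
\]
with \(D_0=0\), where \(\Delta_t^k:=b_j^{N,k}(\rho_t^{N,\eta})-b^{(k)}(\overline\gamma_t^{N,j})\).

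The key inputs are the following.
\begin{itemize}
\item The map \(\mathfrak l_t\) is linear, with operator norm bounded uniformly in \(t\). This holds because \(\|V^{\xi_t}\|\le\|V\|\) for \(\xi_t\in\cS(\HH)\).
\item Each \(\cE_{L^{(k)}}\) is Lipschitz on \(\cS(\HH)\) in trace norm, since it is a quadratic polynomial in the entries and the state space is compact. We also have the uniform bound \(\|\cE_{L^{(k)}}(x)\|_1\le 2\|A^{(k)}\|+2\|L^{(k)}\|\) for \(x\in\cS(\HH)\).
\item The forcing is controlled by \(|\Delta_t^k|=|\Tr(A^{(k)}(\rho_t^{N:(j),\eta}-\overline\gamma_t^{N,j}))|\le\|A^{(k)}\|\,\|\rho_t^{N:(j),\eta}-\overline\gamma_t^{N,j}\|_1\), using \eqref{eq:eta-bNj-def}.
\end{itemize}

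Since all norms on the finite-dimensional space \(\cL(\HH)\) are equivalent, we work with the Hilbert--Schmidt norm for the martingale part and convert back to trace norm at the end; the conversion constants depend only on \(\dim\HH\).

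We take the supremum in time, then apply the Burkholder--Davis--Gundy inequality to the stochastic integral and Cauchy--Schwarz to the drift integrals. Using the Lipschitz bounds above, this gives
\[
  \E\Bigl[\sup_{s\le t}\|D_s\|_2^2\Bigr]\le C\int_0^t\E\Bigl[\sup_{r\le s}\|D_r\|_2^2\Bigr]\d s
  +C\int_0^t\E\bigl[\|\rho_s^{N:(j),\eta}-\overline\gamma_s^{N,j}\|_1^2\bigr]\d s .
\]
Here the constant \(C\) depends on \(T\), \(\eta\), \(h\), \(V\) and the \(L^{(k)}\), but not on \(N\) or \(j\). Gronwall's lemma, followed by the norm equivalence, yields \eqref{eq:eta-local-comparison} with \(C_{\eta,T}=Ce^{CT}\).

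The only point requiring care is that every quantity is handled under \(\Q^{N,\eta}\), the measure under which \(W^{N,k,j}\) are Brownian motions (Proposition~\ref{prop:eta-conditional-reduction}). We also need all processes to be adapted to \(\cY^N\) and to remain in \(\cS(\HH)\), so that the Lipschitz and boundedness estimates apply. This holds for \(\overline\gamma^{N,j}=\Pi(Y^j)\) by Lemma~\ref{lem:eta-barycenter-identification}, and for \(\gamma^{N,\eta,j}\) because the normalized Belavkin flow preserves states. With these facts in place, no step is a genuine obstacle; the argument is a routine Gronwall estimate.
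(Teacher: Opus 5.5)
Your proposal is correct and follows essentially the same route as the paper. Both arguments subtract the two equations driven by the common innovations \(W^{N,k,j}\) under \(\Q^{N,\eta}\), use linearity of \(\mathfrak l_t\) and boundedness/Lipschitz continuity of \(\cE_{L^{(k)}}\) on \(\cS(\HH)\), apply BDG in Hilbert--Schmidt norm and Gronwall, and bound the forcing by \(\|A^{(k)}\|\,\|\rho_s^{N:(j),\eta}-\overline\gamma_s^{N,j}\|_1\) before converting norms.
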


\begin{proof}
Set
\(
  \Delta\gamma_t^j
  :=
  \overline\gamma_t^{N,j}
  -
  \gamma_t^{N,\eta,j}.
\)
Subtracting \eqref{eq:eta-MF-target} from
\eqref{eq:eta-bar-gamma-under-Qeta} gives a linear Lipschitz drift, the
martingale term
\[
  \sqrt{\eta}
  \sum_{k=1}^m
  \left(
    \cE_{L^{(k)}}(\overline\gamma_t^{N,j})
    -
    \cE_{L^{(k)}}(\gamma_t^{N,\eta,j})
  \right)\d W_t^{N,k,j},
\]
and the additional drift
\[
  \eta
  \sum_{k=1}^m
  \cE_{L^{(k)}}(\overline\gamma_t^{N,j})
  \left(
    b_j^{N,k}(\rho_t^{N,\eta})
    -
    b^{(k)}(\overline\gamma_t^{N,j})
  \right).
\]
On the compact state space \(\cS(\HH)\), every map
\(\cE_{L^{(k)}}\) is bounded and Lipschitz.  The
Burkholder--Davis--Gundy inequality in Hilbert--Schmidt norm, followed by
Gronwall's lemma, therefore gives
\[
\begin{aligned}
  &\E_{\Q^{N,\eta}}
  \left[
    \sup_{0\le t\le T}
    \|\Delta\gamma_t^j\|_2^2
  \right] \le
  C_{\eta,T}
  \sum_{k=1}^m
  \int_0^T
  \E_{\Q^{N,\eta}}
  \left|
    b_j^{N,k}(\rho_s^{N,\eta})
    -
    b^{(k)}(\overline\gamma_s^{N,j})
  \right|^2\d s.
\end{aligned}
\]
For every \(k\),  it holds that
\[
\begin{aligned}
  \left|
    b_j^{N,k}(\rho_s^{N,\eta})
    -
    b^{(k)}(\overline\gamma_s^{N,j})
  \right| \le
  \|A^{(k)}\|
  \left\|
    \rho_s^{N:(j),\eta}
    -
    \overline\gamma_s^{N,j}
  \right\|_1.
\end{aligned}
\]
Equivalence between the norms $\|\cdot\|_1$ and $\|\cdot\|_2$ on \(\cL(\HH)\)
proves \eqref{eq:eta-local-comparison}.
\end{proof}

We are now ready to complete the proof of Proposition~\ref{prop:main-exact-eta-less-one}.

\begin{proof}[Proof of Proposition~\ref{prop:main-exact-eta-less-one}]
By \eqref{eq:eta-intermediate-chaos} with \(n=1\), Lemma~\ref{lem:eta-projected-exchangeability}, and the
bound \(\|\rho-\sigma\|_1^2\le2\|\rho-\sigma\|_1\) for density matrices,
\[
\begin{aligned}
  \int_0^T
  \E_{\Q^{N,\eta}}
  \left[
    \left\|
      \rho_s^{N:(j),\eta}
      -
      \overline\gamma_s^{N,j}
    \right\|_1^2
  \right]\d s\le
  C_{\eta,T}N^{-1/8}.
\end{aligned}
\]
Lemma~\ref{lem:eta-local-comparison} and Cauchy--Schwarz therefore yield
\begin{equation}
  \E_{\Q^{N,\eta}}
  \left[
    \sup_{0\le t\le T}
    \left\|
      \overline\gamma_t^{N,j}
      -
      \gamma_t^{N,\eta,j}
    \right\|_1
  \right]
  \le
  C_{\eta,T}N^{-1/16}.
  \label{eq:eta-local-process-rate}
\end{equation}
Set
\(
  \Gamma_t^{N,\eta,n}
  :=
  \gamma_t^{N,\eta,1}
  \otimes\cdots\otimes
  \gamma_t^{N,\eta,n}.
\)
The telescopic inequality for tensor products gives
\[
  \left\|
    \overline\Gamma_t^{N,n}
    -
    \Gamma_t^{N,\eta,n}
  \right\|_1
  \le
  \sum_{j=1}^n
  \left\|
    \overline\gamma_t^{N,j}
    -
    \gamma_t^{N,\eta,j}
  \right\|_1.
\]
Combining this estimate with \eqref{eq:eta-intermediate-chaos} and
\eqref{eq:eta-local-process-rate} proves
\[
  \sup_{0\le t\le T}
  \E_{\Q^{N,\eta}}
  \left[
    \left\|
      \rho_t^{N:n,\eta}
      -
      \Gamma_t^{N,\eta,n}
    \right\|_1
  \right]
  \le
  C_{\eta,T,n}N^{-1/16}.
\]
The pair \((\rho^{N,\eta},\gamma^{N,\eta,1},\ldots,\gamma^{N,\eta,n})\)
is a weak realization of the synchronously coupled equations
\eqref{eq:N-Belavkin} and \eqref{eq:MF-Belavkin}.  Uniqueness in law
therefore identifies its expectation with that in
Definition~\ref{def:propagation-of-chaos}, proving
the desired inequality \eqref{eq:eta-main-rate}.
\end{proof}

\begin{remark}
{\rm (i)} The second, unobserved channel has two roles.  It restores perfect total
observation, so that the result of Section~\ref{sec:purification} applies,
and it supplies the compensating Brownian drift in
\eqref{eq:eta-controls-def}.  The latter changes the drift of the observed
record from the interacting quantity \(b_j^{N,k}(R^N)\) to the one-particle
quantity \(b^{(k)}(X^j)\) without changing the SDE for \(X^j\).

\noindent {\rm (ii)} However,  this analysis yields an entropy constant containing the factor \((1-\eta)^{-1}\),  and their estimates
are not uniform as \(\eta\uparrow1\).  The endpoint \(\eta=1\)
must be treated directly by purification in Section~\ref{sec:purification}.
\end{remark}

\section{Propagation of chaos: stability approach}
\label{sec:initial-perturbation}

We now fix \(0<\eta\le1\) and \(\gamma_0\in\cS(\HH)\), and extend the exact-product results of Propositions~\ref{prop:main-exact-eta-one} and~\ref{prop:main-exact-eta-less-one} to initial states satisfying the strong approximate tensorization condition
\[
  \left\|
    \rho_0^N-\gamma_0^{\otimes N}
  \right\|_1
  \longrightarrow0.
\]
The argument is the same for perfect and imperfect measurement efficiency,  and  the main difficulty is to obtain a stability estimate whose constant is
independent of \(N\).  Indeed, suppose that one directly subtracts two
solutions of the nonlinear \(N\)-particle equation driven by the same
Brownian motions.  A standard Hilbert--Schmidt estimate then contains the
quadratic-variation term
\[
  \eta
  \sum_{k=1}^m\sum_{j=1}^N
  \left\|
    \cE_{L_j^{(k)}}(\rho_t^{N,\alpha})
    -
    \cE_{L_j^{(k)}}(\rho_t^{N,\beta})
  \right\|_2^2.
\]
Estimating the summands separately introduces a factor of order \(N\).
Moreover, the Hilbert--Schmidt norm of a lifted observable may depend on
\(\dim(\HH_N)=\dim(\HH)^N\).  The resulting Gronwall constants therefore
grow with the full many-particle dimension and are useless in the
mean-field limit.

To avoid this problem, we compare the two initial conditions through the
linear Zakai equation.  Its solution map is positive and
preserves the trace in expectation.  These two structural properties yield
an expected trace-norm contraction with constant one, independently of the
number of particles, the number of noise terms and the dimension of
\(\HH_N\).

\subsection{A common linear reference and uniform
\texorpdfstring{\(N\)}{N}-particle stability}
\label{subsec:common-reference}

We use the same coefficients as in Section~\ref{sec:inefficient}.  For \(X\in\cL(\HH_N)\),  recall 
\[
  \mathfrak L_N(X)=-\i[H^N,X]+\sum_{k=1}^m\sum_{j=1}^N\cD_{L_j^{(k)}}(X),
\]
and 
\[
  A^{(k)}=L^{(k)}+(L^{(k)})^*,
  \qquad
  b_j^{N,k}(R)=\Tr(A_j^{(k)}R),
  \qquad R\in\cS(\HH_N).
\]
Fix \(T>0\), and let
\(
  (\Omega,\cF,(\cF_t)_{0\le t\le T},\Q)
\)
carry independent real-valued Brownian motions
\[
  Y^{k,j},
  \qquad
   k=1,\ldots, m,
  \quad
   j=1, \ldots, N.
\]
This is a new reference space, independent of the construction in
Section~\ref{sec:inefficient}.  We take $(\cF_t)_{0\le t\le T}$ to be the usual augmentation of their
natural filtration.
For an initial condition
\(
  \theta_0^N\in\cS(\HH_N),
\)
let \(\widehat\rho^{N,\theta}\) solve
\begin{equation}
\begin{aligned}
  \d\widehat\rho_t^{N,\theta}
  =
  \mathfrak L_N
  \bigl(
    \widehat\rho_t^{N,\theta}
  \bigr)\,\d t+
  \sqrt{\eta}
  \sum_{k=1}^m\sum_{j=1}^N
  \cC_{L_j^{(k)}}
  \bigl(
    \widehat\rho_t^{N,\theta}
  \bigr)\,\d Y_t^{k,j},
  \qquad 
  \widehat\rho_0^{N,\theta}
  =
  \theta_0^N.
\end{aligned}
  \label{eq:reference-linear}
\end{equation}
The superscript \(\theta\) is only a label for the initial condition.  The
coefficients and the reference Brownian motions are the same for all
choices of \(\theta_0^N\).

The next lemma is the one-channel analogue of Lemma~\ref{lem:full-linear-reference}.  It is obtained by projecting the fully observed reference evolution onto the filtration generated by \(Y\).

\begin{lemma}
\label{lem:CP-reference}
For every \(t\in[0,T]\), equation \eqref{eq:reference-linear} defines a
random positive linear map $ \Phi_t^{N,\eta}:\cL(\HH_N)\longrightarrow\cL(\HH_N)$ by
\[
  \Phi_t^{N,\eta}(\theta_0^N)
  :=
  \widehat\rho_t^{N,\theta}.
\]
Moreover, for every \(B\in\cL^+(\HH_N)\),
\begin{equation}
  \E_{\Q}\left[
    \Tr\bigl(\Phi_t^{N,\eta}(B)\bigr)
  \right]
  =
  \Tr(B).
  \label{eq:reference-mean-trace}
\end{equation}
\end{lemma}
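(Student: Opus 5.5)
The plan is to realize \(\Phi_t^{N,\eta}\) as the \(\cY\)-conditional expectation of the fully observed propagator from Lemma~\ref{lem:full-linear-reference}, exactly as the remark preceding the statement suggests. Linearity is immediate: \eqref{eq:reference-linear} is a linear SDE with bounded (indeed constant) linear coefficients on the finite-dimensional space \(\cL(\HH_N)\). Hence pathwise uniqueness holds, and the solution depends linearly on \(\theta_0^N\). More concretely, \(\Phi_t^{N,\eta}\) is given by the fundamental solution of a linear matrix SDE, so it is a random linear map on \(\cL(\HH_N)\) and defined for every \(B\in\cL(\HH_N)\), not only for states.

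For positivity, I would enlarge the probability space by an independent family of Brownian motions \(U^{k,j}\). If \(\eta=1\) this is unnecessary: It\^o's formula shows directly that \(\widehat\rho_t^{N,\theta}=\mathscr U_t\theta_0^N\mathscr U_t^*\), where \(\mathscr U\) solves
\[
  \d\mathscr U_t=\Bigl(-\i H^N-\tfrac12\sum_{k,j}(L_j^{(k)})^*L_j^{(k)}\Bigr)\mathscr U_t\,\d t+\sum_{k,j}L_j^{(k)}\,\d Y_t^{k,j}\,\mathscr U_t ,
\]
because the It\^o correction \(\sum LXL^*\,\d t\) reproduces the jump term of \(\cD_L\). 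For \(\eta<1\), let \(\widehat R_t^{N,\vartheta}=\mathscr U_t^N\vartheta(\mathscr U_t^N)^*\) be the two-channel solution of \eqref{eq:eta-full-linear-N}. The projection argument in the proof of Proposition~\ref{prop:eta-conditional-reduction}, namely \eqref{eq:eta-U-projection} and \eqref{eq:eta-Y-projection}, shows that \(\E_{\Q}[\widehat R_t^{N,\vartheta}\mid\cY_t^N]\) solves \eqref{eq:reference-linear} with initial condition \(\vartheta\). That argument only uses linearity and that the \(U\)-increments are independent of \(Y\) and of the past. By pathwise uniqueness,
\[
  \Phi_t^{N,\eta}(\vartheta)=\E_{\Q}\bigl[\mathscr U_t^N\vartheta(\mathscr U_t^N)^*\mid\cY_t^N\bigr]
\]
almost surely, for each fixed \(\vartheta\). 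A conditional expectation of positive semidefinite matrices is positive semidefinite. To obtain positivity of the random map simultaneously for all \(B\ge0\), and not only almost surely for each fixed \(B\), I would use a countable dense subset of \(\cL^+(\HH_N)\) together with continuity of the linear map \(\Phi_t^{N,\eta}\).

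For the trace identity, take the trace in \eqref{eq:reference-linear}. The drift \(\mathfrak L_N\) is trace-free, by cyclicity of the commutator and because the Lindblad dissipator preserves trace. The stochastic term is \(\sqrt\eta\sum_{k,j}\Tr(A_j^{(k)}\widehat\rho_t)\,\d Y_t^{k,j}\), a genuine martingale since \(\widehat\rho\) has all moments bounded on \([0,T]\); this follows from standard linear SDE estimates. Hence \(\E_{\Q}[\Tr\Phi_t^{N,\eta}(B)]=\Tr(B)\). Alternatively, the identity follows from \eqref{eq:eta-full-mean-trace} by the tower property.

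The main technical point is the identification of the conditional expectation with the solution of \eqref{eq:reference-linear}. Here one must check that \(\cY^N_t\) for the new reference space matches the setting of Proposition~\ref{prop:eta-conditional-reduction}, in particular that \(\E[\widehat R_s\mid\cY_t]=\E[\widehat R_s\mid\cY_s]\). Since this was already established there, I would simply invoke it. Everything else is routine.
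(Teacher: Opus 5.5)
Your proposal is correct and follows essentially the same route as the paper. Both enlarge the space by independent $U^{k,j}$, identify $\Phi_t^{N,\eta}(B)$ with the $Y$-conditional expectation of $\mathscr U_t^N B(\mathscr U_t^N)^*$ via the projection argument of Proposition~\ref{prop:eta-conditional-reduction}, and obtain the trace identity from the tower property (your direct trace-martingale computation works equally well). The only differences are minor: the paper states complete positivity, via positivity of block matrices under conditional expectation, whereas you prove exactly the stated positivity and add a harmless countable-dense-set refinement to get positivity for all $B\ge0$ on a single full-measure event.
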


\begin{proof}
This is the observed-channel projection of Lemma~\ref{lem:full-linear-reference}.  More precisely, on an auxiliary extension of the present reference space, add independent Brownian motions \(U^{k,j}\), and let \(\widehat R^{N,B}\) solve the full linear equation \eqref{eq:eta-full-linear-N} with initial condition \(B\).  By the projection argument used in Proposition~\ref{prop:eta-conditional-reduction},
\[
  \Phi_t^{N,\eta}(B)
  =
  \widehat\rho_t^{N,B}
  =
  \E_{\Q}\left[
    \widehat R_t^{N,B}\mid\cF_t
  \right].
\]
Lemma~\ref{lem:full-linear-reference} gives complete positivity of \(B\mapsto\widehat R_t^{N,B}\) and preservation of the trace in expectation.  Conditional expectation preserves positivity of block matrices, hence complete positivity, and the tower property gives \eqref{eq:reference-mean-trace}.
\end{proof}

Set
\begin{equation}
  Z_t^{N,\theta}
  :=
  \Tr\bigl(\widehat\rho_t^{N,\theta}\bigr),
  \qquad
  \rho_t^{N,\theta}
  :=
  \frac{
    \widehat\rho_t^{N,\theta}
  }{
    Z_t^{N,\theta}
  }.
  \label{eq:perturbation-normalization}
\end{equation}
The process \(\widehat\rho^{N,\theta}\) is positive by
Lemma~\ref{lem:CP-reference}.  Moreover,
\[
  Z_t^{N,\theta}>0
  \qquad
  \Q\text{-almost surely}.
\]
Indeed, on the auxiliary extension used in the proof of Lemma~\ref{lem:CP-reference}, the representation \eqref{eq:eta-full-propagator} shows that the full unnormalized trace is strictly positive.  Its conditional expectation is therefore strictly positive as well.

Taking the trace in \eqref{eq:reference-linear} yields
\begin{equation}
\begin{aligned}
  \d Z_t^{N,\theta}
  &=
  \sqrt{\eta}\,
  Z_t^{N,\theta}
  \sum_{k=1}^m\sum_{j=1}^N
  b_j^{N,k}(\rho_t^{N,\theta})\,\d Y_t^{k,j}.
\end{aligned}
  \label{eq:perturbation-Z-SDE}
\end{equation}
Since
\[
  \left|
    b_j^{N,k}(R)
  \right|
  \le
  \|A^{(k)}\|,
  \qquad
  R\in\cS(\HH_N),
\]
Novikov's condition holds on every finite time interval.  Thus
\(Z^{N,\theta}\) is a strictly positive martingale with mean one.  Define
the probability measure
\begin{equation}
  \frac{\d\Q^{N,\theta}}{\d\Q}
  \bigg|_{\cF_T}
  :=
  Z_T^{N,\theta}.
  \label{eq:perturbation-Qtheta}
\end{equation}
Under \(\Q^{N,\theta}\), the processes
\begin{equation}
  W_t^{N,\theta,k,j}
  :=
  Y_t^{k,j}
  -
  \sqrt{\eta}
  \int_0^t
  b_j^{N,k}(\rho_s^{N,\theta})\,\d s
  \label{eq:perturbation-Wtheta}
\end{equation}
form a family of independent Brownian motions.  Normalizing
\eqref{eq:reference-linear} gives
\begin{equation}
\begin{aligned}
  \d\rho_t^{N,\theta}
  =
  \mathfrak L_N(\rho_t^{N,\theta})\,\d t+
  \sqrt{\eta}
  \sum_{k=1}^m\sum_{j=1}^N
  \cE_{L_j^{(k)}}(\rho_t^{N,\theta})
  \,\d W_t^{N,\theta,k,j}.
\end{aligned}
  \label{eq:perturbation-nonlinear-N}
\end{equation}
Hence, under \(\Q^{N,\theta}\), the process
\(\rho^{N,\theta}\) is a weak solution of the \(N\)-particle Belavkin
equation with initial condition \(\theta_0^N\).

\begin{lemma}
\label{lem:linear-contraction-final}
Let
\(
  \alpha_0^N,\beta_0^N\in\cS(\HH_N).
\)
Then, for every \(t\in[0,T]\),
\begin{equation}
  \E_{\Q}
  \left[
    \left\|
      \widehat\rho_t^{N,\alpha}
      -
      \widehat\rho_t^{N,\beta}
    \right\|_1
  \right]
  \le
  \left\|
    \alpha_0^N-\beta_0^N
  \right\|_1.
  \label{eq:linear-contraction-final}
\end{equation}
\end{lemma}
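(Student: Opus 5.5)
By linearity of \(\Phi_t^{N,\eta}\) from Lemma~\ref{lem:CP-reference}, we have
\(
  \widehat\rho_t^{N,\alpha}-\widehat\rho_t^{N,\beta}
  =
  \Phi_t^{N,\eta}(\alpha_0^N-\beta_0^N),
\)
so the claim reduces to an expected trace-norm contraction for a positive, trace-preserving-in-mean linear map applied to a self-adjoint initial datum. The plan is to split the difference into positive and negative parts and use the two structural properties of \(\Phi_t^{N,\eta}\) separately.

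First, take the Jordan decomposition of the self-adjoint operator \(D:=\alpha_0^N-\beta_0^N\), namely \(D=D_+-D_-\) with \(D_\pm\in\cL^+(\HH_N)\), \(D_+D_-=0\), and \(\|D\|_1=\Tr(D_+)+\Tr(D_-)\). Linearity gives \(\Phi_t^{N,\eta}(D)=\Phi_t^{N,\eta}(D_+)-\Phi_t^{N,\eta}(D_-)\) almost surely.

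Second, by positivity of \(\Phi_t^{N,\eta}\), both \(\Phi_t^{N,\eta}(D_\pm)\) are positive operators, so their trace norms equal their traces. The triangle inequality then yields the pathwise bound
\[
  \bigl\|\Phi_t^{N,\eta}(D)\bigr\|_1
  \le
  \Tr\bigl(\Phi_t^{N,\eta}(D_+)\bigr)+\Tr\bigl(\Phi_t^{N,\eta}(D_-)\bigr).
\]
Taking \(\Q\)-expectations and applying \eqref{eq:reference-mean-trace} to \(B=D_\pm\) gives
\[
  \E_{\Q}\bigl[\|\Phi_t^{N,\eta}(D)\|_1\bigr]
  \le
  \Tr(D_+)+\Tr(D_-)
  =
  \|\alpha_0^N-\beta_0^N\|_1,
\]
which is \eqref{eq:linear-contraction-final}.

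The argument never involves \(N\), \(m\), or \(\dim(\HH_N)\), which is exactly the point of the lemma. The only step requiring care is the linearity and positivity of \(\Phi_t^{N,\eta}\) as an almost-sure statement, simultaneously for the finitely many inputs \(D_\pm\), \(\alpha_0^N\), \(\beta_0^N\). This follows from pathwise uniqueness for the linear equation \eqref{eq:reference-linear}: the solutions started from \(D_+\) and \(D_-\) combine linearly to give the solution started from \(D\), up to a null set. Positivity is already provided by Lemma~\ref{lem:CP-reference} via the projected propagator representation \eqref{eq:eta-full-propagator}. Integrability holds because each \(\Tr(\Phi_t^{N,\eta}(D_\pm))\) is a nonnegative variable with finite mean.
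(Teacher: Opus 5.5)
Your proposal is correct and matches the paper's proof step for step. Like the paper, you take the Jordan decomposition of \(\alpha_0^N-\beta_0^N\), use linearity and positivity of \(\Phi_t^{N,\eta}\) to bound the trace norm pathwise by \(\Tr(\Phi_t^{N,\eta}(D_+))+\Tr(\Phi_t^{N,\eta}(D_-))\), and then apply trace preservation in expectation; your added remarks on almost-sure linearity and integrability are harmless clarifications.
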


\begin{proof}
Let
\[
  \alpha_0^N-\beta_0^N
  =
  D_+-D_-
\]
be the Jordan decomposition, where
\(
  D_+,D_-\ge0\) and \(
  D_+D_-=0.
\)
By linearity,
\(
  \widehat\rho_t^{N,\alpha}
  -
  \widehat\rho_t^{N,\beta}
  =
  \Phi_t^{N,\eta}(D_+)
  -
  \Phi_t^{N,\eta}(D_-).
\)
The positivity of \(\Phi_t^{N,\eta}\) thus gives
\[
\begin{aligned}
  \left\|
    \widehat\rho_t^{N,\alpha}
    -
    \widehat\rho_t^{N,\beta}
  \right\|_1
  &\le
  \Tr\bigl(\Phi_t^{N,\eta}(D_+)\bigr)
  +
  \Tr\bigl(\Phi_t^{N,\eta}(D_-)\bigr).
\end{aligned}
\]
Taking expectations and applying
\eqref{eq:reference-mean-trace}, we obtain
\[
\begin{aligned}
  \E_{\Q}
  \left[
    \left\|
      \widehat\rho_t^{N,\alpha}
      -
      \widehat\rho_t^{N,\beta}
    \right\|_1
  \right]\le
  \Tr(D_+)+\Tr(D_-)=
  \left\|
    \alpha_0^N-\beta_0^N
  \right\|_1.
\end{aligned}
\]
\end{proof}

\begin{lemma}
\label{lem:normalization-final}
Let \(B,C\ge0\) be nonzero, and set
\(
  b:=\Tr(B)\) and \(
  c:=\Tr(C).
\)
Then
\begin{equation}
  b
  \left\|
    \frac{B}{b}-\frac{C}{c}
  \right\|_1
  \le
  2\|B-C\|_1.
  \label{eq:normalization-inequality}
\end{equation}
\end{lemma}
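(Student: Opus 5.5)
The plan is to reduce the estimate to the triangle inequality together with the elementary bound \(\|C\|_1=\Tr(C)\) for positive \(C\). We write
\[
  b\left(\frac{B}{b}-\frac{C}{c}\right)
  =
  B-\frac{b}{c}\,C
  =
  (B-C)+\left(1-\frac{b}{c}\right)C
  =
  (B-C)+\frac{c-b}{c}\,C .
\]
Taking trace norms and using the triangle inequality gives
\[
  b\left\|\frac{B}{b}-\frac{C}{c}\right\|_1
  \le
  \|B-C\|_1+\frac{|c-b|}{c}\,\|C\|_1 .
\]

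Since \(C\ge0\), we have \(\|C\|_1=\Tr(C)=c\). The second term is therefore exactly \(|b-c|=|\Tr(B-C)|\).

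By trace duality, \(|\Tr(X)|=|\Tr(I\,X)|\le\|I\|\,\|X\|_1=\|X\|_1\) for any \(X\). Applying this to \(X=B-C\) gives \(|b-c|\le\|B-C\|_1\), and hence the right-hand side is at most \(2\|B-C\|_1\), which is \eqref{eq:normalization-inequality}.

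The only points needing care are that \(c>0\) is used in the division (guaranteed by nonzero \(C\ge0\)), and that positivity is genuinely needed to identify \(\|C\|_1\) with \(c\). There is no real obstacle here; the lemma is elementary and will be used afterwards with \(B=\widehat\rho_t^{N,\alpha:n}\)-type objects, where \(b\) becomes the Radon--Nikodym density \(Z_t^{N,\alpha}\), so that the factor \(b\) on the left is absorbed by the change of measure.
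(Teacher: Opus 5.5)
Your proposal is correct and follows essentially the same argument as the paper: you rewrite the left side as \(\|B-\tfrac{b}{c}C\|_1\), apply the triangle inequality with \(\|C\|_1=c\), and bound \(|b-c|\le\|B-C\|_1\) by trace duality. One minor aside on your closing remark: the paper applies the lemma to the full unnormalized states \(\widehat\rho_t^{N,\alpha},\widehat\rho_t^{N,\beta}\) and obtains the marginal estimate afterwards by contractivity of the partial trace, though applying it directly to marginals would also work since they have the same trace.
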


\begin{proof}
We have
\[
\begin{aligned}
  b
  \left\|
    \frac{B}{b}-\frac{C}{c}
  \right\|_1
  =
  \left\|
    B-\frac{b}{c}C
  \right\|_1
  \le
  \|B-C\|_1
  +
  \left|
    1-\frac{b}{c}
  \right|
  \|C\|_1
  =
  \|B-C\|_1+|b-c|.
\end{aligned}
\]
Since
\(
  |b-c|
  \le
  \|B-C\|_1,
\)
the result follows.
\end{proof}

For the remainder of the section, set
\begin{equation}
  d_N
  :=
  \left\|
    \alpha_0^N-\beta_0^N
  \right\|_1.
  \label{eq:perturbation-dN}
\end{equation}
For $1\le n\le N$, we use the notation
\(
  \rho_t^{N:n,\theta}
  :=
  \Tr_{[N]\setminus[n]}(\rho_t^{N,\theta}).
\)

\begin{proposition}
\label{prop:common-reference-final}
For every \(t\in[0,T]\),
\begin{equation}
  \E_{\Q^{N,\alpha}}
  \left[
    \left\|
      \rho_t^{N,\alpha}
      -
      \rho_t^{N,\beta}
    \right\|_1
  \right]
  \le
  2d_N.
  \label{eq:uniform-N-stability}
\end{equation}
The same estimate holds for every marginal:
\begin{equation}
  \E_{\Q^{N,\alpha}}
  \left[
    \left\|
      \rho_t^{N:n,\alpha}
      -
      \rho_t^{N:n,\beta}
    \right\|_1
  \right]
  \le
  2d_N,
  \qquad
  1\le n\le N.
  \label{eq:uniform-marginal-stability}
\end{equation}
\end{proposition}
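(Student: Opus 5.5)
The plan is to use the change of measure \(\d\Q^{N,\alpha}=Z_T^{N,\alpha}\,\d\Q\) so that the nonlinear comparison under \(\Q^{N,\alpha}\) becomes a weighted comparison under the common reference measure \(\Q\). Fix \(t\in[0,T]\). The random variable \(\|\rho_t^{N,\alpha}-\rho_t^{N,\beta}\|_1\) is \(\cF_t\)-measurable and bounded by \(2\), and \(Z^{N,\alpha}\) is a \(\Q\)-martingale. Hence
\[
  \E_{\Q^{N,\alpha}}\left[\left\|\rho_t^{N,\alpha}-\rho_t^{N,\beta}\right\|_1\right]
  =
  \E_{\Q}\left[Z_t^{N,\alpha}\left\|\frac{\widehat\rho_t^{N,\alpha}}{Z_t^{N,\alpha}}-\frac{\widehat\rho_t^{N,\beta}}{Z_t^{N,\beta}}\right\|_1\right].
\]
Both unnormalized states are positive by Lemma~\ref{lem:CP-reference}, and their traces \(Z_t^{N,\alpha},Z_t^{N,\beta}\) are strictly positive \(\Q\)-a.s. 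We can therefore apply Lemma~\ref{lem:normalization-final} pathwise with \(B=\widehat\rho_t^{N,\alpha}\), \(C=\widehat\rho_t^{N,\beta}\) and \(b=Z_t^{N,\alpha}\). This bounds the integrand by \(2\|\widehat\rho_t^{N,\alpha}-\widehat\rho_t^{N,\beta}\|_1\). Taking \(\Q\)-expectations and invoking Lemma~\ref{lem:linear-contraction-final} gives \eqref{eq:uniform-N-stability} with constant \(2d_N\), independent of \(N\).

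The weight in front of the normalized difference is exactly \(Z_t^{N,\alpha}=\Tr(B)\). This is the quantity Lemma~\ref{lem:normalization-final} is designed to absorb, so no bound on \(1/Z\) or on moments of the density is needed. One small point to check is that we use \(Z_t\) rather than \(Z_T\). This is justified by the tower property: \(\E_\Q[Z_T^{N,\alpha}\mid\cF_t]=Z_t^{N,\alpha}\), which holds because \(Z^{N,\alpha}\) is a true martingale by Novikov.

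For the marginal estimate \eqref{eq:uniform-marginal-stability}, I use that the partial trace \(\Tr_{[N]\setminus[n]}\) is linear and contractive in trace norm. The proof is the duality argument of Lemma~\ref{lem:partial-trace-contraction} with \(A\otimes I\). Hence \(\|\rho_t^{N:n,\alpha}-\rho_t^{N:n,\beta}\|_1\le\|\rho_t^{N,\alpha}-\rho_t^{N,\beta}\|_1\) pointwise, and the bound follows from the first one.

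I do not expect a real obstacle. The only delicate point is conceptual: \(\rho^{N,\beta}\) is evaluated under \(\Q^{N,\alpha}\), where it is not itself a Belavkin solution with Brownian innovations. The statement only compares the two processes as random variables on the common space, so this causes no difficulty. Its use later, for transferring estimates proved under \(\Q^{N,\beta}\), will need the measures \(\Q^{N,\alpha}\) and \(\Q^{N,\beta}\) to be compared in total variation. That comparison is again \(\E_\Q|Z_t^{N,\alpha}-Z_t^{N,\beta}|\le d_N\), by Lemma~\ref{lem:linear-contraction-final}.
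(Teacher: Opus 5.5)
Your proposal is correct and follows the paper's own proof essentially verbatim. You rewrite the \(\Q^{N,\alpha}\)-expectation as a \(\Q\)-expectation weighted by \(Z_t^{N,\alpha}=\Tr(\widehat\rho_t^{N,\alpha})\), apply Lemma~\ref{lem:normalization-final} pathwise, then Lemma~\ref{lem:linear-contraction-final}, and finish with trace-norm contractivity of the partial trace for the marginals. Your remark that \(Z_T^{N,\alpha}\) may be replaced by \(Z_t^{N,\alpha}\) through the martingale property makes explicit a step the paper uses without comment.
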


\begin{proof}
Apply Lemma~\ref{lem:normalization-final} with
\(
  B=\widehat\rho_t^{N,\alpha}\) and \(
  C=\widehat\rho_t^{N,\beta}.
\)
Multiplying by the density \(Z_t^{N,\alpha}\) and taking
\(\Q\)-expectations gives
\[
\begin{aligned}
  \E_{\Q^{N,\alpha}}
  \left[
    \left\|
      \rho_t^{N,\alpha}
      -
      \rho_t^{N,\beta}
    \right\|_1
  \right]
  =
  \E_{\Q}
  \left[
    Z_t^{N,\alpha}
    \left\|
      \rho_t^{N,\alpha}
      -
      \rho_t^{N,\beta}
    \right\|_1
  \right]
  \le
  2
  \E_{\Q}
  \left[
    \left\|
      \widehat\rho_t^{N,\alpha}
      -
      \widehat\rho_t^{N,\beta}
    \right\|_1
  \right]
  \le
  2d_N.
\end{aligned}
\]
The marginal estimate follows from trace-norm contractivity of the partial
trace.
\end{proof}

Under \(\Q^{N,\alpha}\), only \(\rho^{N,\alpha}\) is asserted to solve the
nonlinear equation with initial condition \(\alpha_0^N\).  The process
\(\rho^{N,\beta}\) in
\eqref{eq:uniform-N-stability} is the normalized solution of the same
linear reference equation, evaluated along the same observation paths, but
with initial condition \(\beta_0^N\).  This distinction is the reason for
introducing the changed measures below.

For \(t\le T\), let
\(
  \Q_t^{N,\theta}
  :=
  \Q^{N,\theta}\big|_{\cF_t}.
\)

\begin{proposition}
\label{prop:TV-final}
For every \(t\in[0,T]\),
\begin{equation}
\begin{aligned}
  d_{\mathrm{TV}}
  \left(
    \Q_t^{N,\alpha},
    \Q_t^{N,\beta}
  \right)
  =
  \frac12
  \E_{\Q}
  \left[
    \left|
      Z_t^{N,\alpha}
      -
      Z_t^{N,\beta}
    \right|
  \right]
 \le
  \frac12d_N.
\end{aligned}
  \label{eq:perturbation-TV-bound}
\end{equation}
\end{proposition}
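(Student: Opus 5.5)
The equality in \eqref{eq:perturbation-TV-bound} is the standard formula for total variation between two measures that are absolutely continuous with respect to a common reference. Both \(\Q_t^{N,\alpha}\) and \(\Q_t^{N,\beta}\) are defined on \(\cF_t\) by densities with respect to \(\Q\). Since \(Z^{N,\theta}\) is a \(\Q\)-martingale, the density of the restriction to \(\cF_t\) is \(\E_{\Q}[Z_T^{N,\theta}\mid\cF_t]=Z_t^{N,\theta}\). For two probability measures with densities \(f,g\) with respect to \(\Q\), one has
\[
  d_{\mathrm{TV}}=\sup_{A\in\cF_t}|\Q_t^{N,\alpha}(A)-\Q_t^{N,\beta}(A)|=\tfrac12\E_{\Q}|f-g|.
\]
This gives the first equality directly.

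For the inequality, I will use that the normalizers are traces of the unnormalized states: \(Z_t^{N,\theta}=\Tr(\widehat\rho_t^{N,\theta})\) by \eqref{eq:perturbation-normalization}. For any self-adjoint \(X\) one has \(|\Tr X|\le\|X\|_1\). Hence, pathwise,
\[
  \left|Z_t^{N,\alpha}-Z_t^{N,\beta}\right|
  =\left|\Tr\bigl(\widehat\rho_t^{N,\alpha}-\widehat\rho_t^{N,\beta}\bigr)\right|
  \le\left\|\widehat\rho_t^{N,\alpha}-\widehat\rho_t^{N,\beta}\right\|_1.
\]
Taking \(\Q\)-expectations and applying Lemma~\ref{lem:linear-contraction-final} bounds the right-hand side by \(d_N\). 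Multiplying by \(\tfrac12\) gives the claim.

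An alternative route to the same bound avoids even the norm comparison. Write \(\alpha_0^N-\beta_0^N=D_+-D_-\) as in the proof of Lemma~\ref{lem:linear-contraction-final}. Linearity gives \(Z^{N,\alpha}_t-Z^{N,\beta}_t=\Tr\Phi_t^{N,\eta}(D_+)-\Tr\Phi_t^{N,\eta}(D_-)\). Both terms are nonnegative by positivity, and their expectations are \(\Tr D_\pm\) by \eqref{eq:reference-mean-trace}.

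There is no real obstacle here. The only point needing care is that the restricted densities are \(Z_t\) rather than \(Z_T\). This follows from the martingale property, which was established via Novikov's condition after \eqref{eq:perturbation-Z-SDE}.
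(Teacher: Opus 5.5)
Your proposal is correct and follows the paper's proof: the martingale property identifies \(Z_t^{N,\theta}\) as the density on \(\cF_t\), which gives the total-variation formula. The bound then follows from \(|\Tr X|\le\|X\|_1\) and the expected trace-norm contraction of Lemma~\ref{lem:linear-contraction-final}. Your alternative route via the Jordan decomposition simply inlines that lemma's proof.
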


\begin{proof}
Since \(Z^{N,\theta}\) is a martingale, it is the density of
\(\Q_t^{N,\theta}\) with respect to \(\Q|_{\cF_t}\).  The first equality
is therefore the density formula for total variation.  Moreover,
\[
\begin{aligned}
  \left|
    Z_t^{N,\alpha}
    -
    Z_t^{N,\beta}
  \right|
  =
  \left|
    \Tr\left(
      \widehat\rho_t^{N,\alpha}
      -
      \widehat\rho_t^{N,\beta}
    \right)
  \right|\le
  \left\|
    \widehat\rho_t^{N,\alpha}
    -
    \widehat\rho_t^{N,\beta}
  \right\|_1.
\end{aligned}
\]
The conclusion follows from
Lemma~\ref{lem:linear-contraction-final}.
\end{proof}

In particular, if \(F\) is \(\cF_t\)-measurable and
\(
  a\le F\le b,
\)
then
\begin{equation}
  \left|
    \E_{\Q^{N,\alpha}}[F]
    -
    \E_{\Q^{N,\beta}}[F]
  \right|
  \le
  (b-a)
  d_{\mathrm{TV}}
  \left(
    \Q_t^{N,\alpha},
    \Q_t^{N,\beta}
  \right).
  \label{eq:TV-expectation}
\end{equation}

\begin{remark}
The estimates
\eqref{eq:uniform-N-stability} and
\eqref{eq:perturbation-TV-bound} contain no factor depending on
\(\dim(\HH_N)\), \(mN\), or the norm of the \(N\)-particle generator.
This is the essential gain of the linear reference equation.  The argument
uses positivity and trace preservation rather than a Lipschitz estimate for
the nonlinear \(N\)-particle coefficients.
\end{remark}

\subsection{Comparison of the limiting processes and proof of
Theorem~\ref{thm:main-approx}}
\label{subsec:limiting-comparison-perturbation}

Recall that \(\xi\) is the deterministic solution of
\eqref{eq:mean-state-equation}, and 
\[
  \mathfrak l_t(X)
  =
  -\i[h+V^{\xi_t},X]
  +
  \sum_{k=1}^m
  \cD_{L^{(k)}}(X)
 \]
 is defined via  \eqref{eq:eta-little-l-def}.  For
\(
  \theta\in\{\alpha,\beta\}
\)
and \(j=1,\ldots,N\), define \(\gamma^{N,\theta,j}\) on the common
reference space by
\begin{equation}
\begin{aligned}
  \d\gamma_t^{N,\theta,j}
  &=
  \mathfrak l_t
  \bigl(
    \gamma_t^{N,\theta,j}
  \bigr)\,\d t+
  \sqrt{\eta}
  \sum_{k=1}^m
  \cE_{L^{(k)}}
  \bigl(
    \gamma_t^{N,\theta,j}
  \bigr)
  \left(
    \d Y_t^{k,j}
    -
    \sqrt{\eta}\,
    b_j^{N,k}
    \bigl(
      \rho_t^{N,\theta}
    \bigr)\,\d t
  \right),
  \\
  \gamma_0^{N,\theta,j}
  &=
  \gamma_0.
\end{aligned}
  \label{eq:perturbation-gamma-observation-form}
\end{equation}
Equivalently, under \(\Q^{N,\theta}\),
\begin{equation}
\begin{aligned}
  \d\gamma_t^{N,\theta,j}
  =
  \mathfrak l_t
  \bigl(
    \gamma_t^{N,\theta,j}
  \bigr)\,\d t+
  \sqrt{\eta}
  \sum_{k=1}^m
  \cE_{L^{(k)}}
  \bigl(
    \gamma_t^{N,\theta,j}
  \bigr)
  \,\d W_t^{N,\theta,k,j}.
\end{aligned}
  \label{eq:perturbation-gamma-physical-form}
\end{equation}
Thus, under \(\Q^{N,\theta}\), the processes
\(
  \gamma^{N,\theta,1},
  \ldots,
  \gamma^{N,\theta,N}
\)
are independent copies of the limiting one-particle equation.  In
particular,
\begin{equation}
  \E_{\Q^{N,\theta}}
  \left[
    \gamma_t^{N,\theta,j}
  \right]
  =
  \xi_t.
  \label{eq:perturbation-gamma-mean}
\end{equation}
For \(1\le n\le N\), set
\begin{equation}
  \Gamma_t^{N,\theta,n}
  :=
  \gamma_t^{N,\theta,1}
  \otimes\cdots\otimes
  \gamma_t^{N,\theta,n}.
  \label{eq:perturbation-Gamma-theta}
\end{equation}
The two processes are related by
\begin{equation}
\begin{aligned}
  \d W_t^{N,\beta,k,j}
  =
  \d W_t^{N,\alpha,k,j}+
  \sqrt{\eta}
  \left(
    b_j^{N,k}(\rho_t^{N,\alpha})
    -
    b_j^{N,k}(\rho_t^{N,\beta})
  \right)\d t.
\end{aligned}
  \label{eq:perturbation-W-relation}
\end{equation}
This identity allows the limiting processes to be compared under the same
measure and with the same Brownian motions.

\begin{lemma}
\label{lem:MF-comparison-final}
For every \(T>0\), there exists a constant \(C_T<\infty\), independent of
\(N\), \(j\) and \(\eta\in(0,1]\), such that
\begin{equation}
  \sup_{0\le t\le T}
  \E_{\Q^{N,\alpha}}
  \left[
    \left\|
      \gamma_t^{N,\alpha,j}
      -
      \gamma_t^{N,\beta,j}
    \right\|_1
  \right]
  \le
  C_Td_N.
  \label{eq:MF-comparison-final}
\end{equation}
\end{lemma}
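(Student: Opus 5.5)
Under $\Q^{N,\alpha}$, both processes $\gamma^{N,\alpha,j}$ and $\gamma^{N,\beta,j}$ are driven by the same Brownian motions $W^{N,\alpha,k,j}$. By \eqref{eq:perturbation-W-relation}, the equation for $\gamma^{N,\beta,j}$ reads
\[
  \d\gamma_t^{N,\beta,j}
  =
  \mathfrak l_t(\gamma_t^{N,\beta,j})\,\d t
  +\sqrt{\eta}\sum_{k=1}^m\cE_{L^{(k)}}(\gamma_t^{N,\beta,j})\,\d W_t^{N,\alpha,k,j}
  +\eta\sum_{k=1}^m\cE_{L^{(k)}}(\gamma_t^{N,\beta,j})\bigl(b_j^{N,k}(\rho_t^{N,\alpha})-b_j^{N,k}(\rho_t^{N,\beta})\bigr)\d t .
\]
So I would set $\Delta_t:=\gamma_t^{N,\alpha,j}-\gamma_t^{N,\beta,j}$ and run the same argument as in Lemma~\ref{lem:eta-local-comparison}. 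This is a one-particle computation in $\cL(\HH)$, so all norm-equivalence constants depend only on $\dim(\HH)$ and not on $N$.

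Concretely, I apply It\^o's formula to $\|\Delta_t\|_2^2$ under $\Q^{N,\alpha}$. The map $\mathfrak l_t$ is linear, with operator norm bounded uniformly in $t$ because $\|V^{\xi_t}\|\le\|V\|$. Each $\cE_{L^{(k)}}$ is bounded and Lipschitz on the compact set $\cS(\HH)$. Using $\eta\le1$, this gives
\[
  \E_{\Q^{N,\alpha}}\|\Delta_t\|_2^2
  \le C\int_0^t\E_{\Q^{N,\alpha}}\|\Delta_s\|_2^2\,\d s
  + C\sum_{k=1}^m\int_0^t\E_{\Q^{N,\alpha}}\bigl[\|\Delta_s\|_2\,\bigl|b_j^{N,k}(\rho_s^{N,\alpha})-b_j^{N,k}(\rho_s^{N,\beta})\bigr|\bigr]\d s .
\]
The difficulty is that a squared estimate followed by Gronwall would only give $\E\|\Delta\|^2\lesssim d_N$, hence $\E\|\Delta\|\lesssim\sqrt{d_N}$ rather than the claimed linear rate $d_N$.

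To get the linear rate, I would avoid squaring the forcing. The forcing term is controlled by
\[
  |b_j^{N,k}(\rho^{N,\alpha})-b_j^{N,k}(\rho^{N,\beta})|
  \le\|A^{(k)}\|\,\|\rho^{N,\alpha}-\rho^{N,\beta}\|_1,
\]
and Proposition~\ref{prop:common-reference-final} bounds its $\Q^{N,\alpha}$-expectation by $2\|A^{(k)}\|d_N$. Since $\Delta_s$ is bounded ($\|\Delta_s\|_2\le 2$), the cross term in the display is only $O(d_N)$ and gives nothing better. The way out is to use the mixed term $\|\Delta_s\|_2\,|b\text{-diff}|$ directly, via a smoothed norm. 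I would apply It\^o to $f_\varepsilon(\Delta_t)=(\varepsilon^2+\|\Delta_t\|_2^2)^{1/2}$, whose gradient has norm at most $1$ and whose Hessian is bounded by $1/f_\varepsilon$. The forcing then contributes at most $C\,\E|b\text{-diff}|\le Cd_N$ per unit time. The Lipschitz drift contributes $C\,\E f_\varepsilon(\Delta)$. The It\^o correction is bounded by $C\,\|\Delta\|_2^2/f_\varepsilon\le C f_\varepsilon(\Delta)$, because the diffusion coefficient difference is Lipschitz in $\Delta$. The stochastic integral has zero expectation since its integrand is bounded. Gronwall then yields $\E f_\varepsilon(\Delta_t)\le(\varepsilon+CTd_N)e^{CT}$, and letting $\varepsilon\to0$ and passing to the trace norm by norm equivalence on $\cL(\HH)$ gives \eqref{eq:MF-comparison-final}.

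The constants depend only on $T$, $h$, $V$, the $L^{(k)}$ and $\dim(\HH)$; they are uniform in $\eta\in(0,1]$ because every $\eta$-factor is at most $1$. The step I expect to need care is exactly this smoothed-norm It\^o computation. In particular, the second-order term must be bounded by $C f_\varepsilon$ and not by $C/f_\varepsilon$, and this uses that the noise coefficients vanish when $\Delta=0$, i.e.\ the Lipschitz property of $\cE_{L^{(k)}}$.
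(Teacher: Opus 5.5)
Your proposal is correct and follows the paper's proof essentially step for step. You rewrite the $\beta$-equation under $\Q^{N,\alpha}$ via the innovation relation and apply It\^o's formula to the smoothed norm $f_\varepsilon(\Delta)=\sqrt{\|\Delta\|_2^2+\varepsilon^2}$, which is what gives the linear rate $d_N$. You bound the forcing by $\E_{\Q^{N,\alpha}}|\delta b|\le 2\|A^{(k)}\|d_N$ from Proposition~\ref{prop:common-reference-final} and conclude with Gronwall and $\varepsilon\downarrow0$. Your explicit check that the It\^o correction is at most $C\|\Delta\|_2^2/f_\varepsilon\le Cf_\varepsilon$ usefully spells out a step the paper leaves implicit.
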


\begin{proof}
For \(1\le k\le m\), set
\[
  \delta b_t^{N,k,j}:=b_j^{N,k}(\rho_t^{N,\alpha})-b_j^{N,k}(\rho_t^{N,\beta}).
\]
Under \(\Q^{N,\alpha}\), equation \eqref{eq:perturbation-W-relation} gives
\begin{equation}
  \d\gamma_t^{N,\beta,j}=\mathfrak l_t(\gamma_t^{N,\beta,j})\,\d t+\sqrt{\eta}\sum_{k=1}^m\cE_{L^{(k)}}(\gamma_t^{N,\beta,j})\,\d W_t^{N,\alpha,k,j}+\eta\sum_{k=1}^m\cE_{L^{(k)}}(\gamma_t^{N,\beta,j})\delta b_t^{N,k,j}\,\d t.
  \label{eq:perturbation-gamma-beta-under-alpha}
\end{equation}
Let
\[
  \Delta\gamma_t^j:=\gamma_t^{N,\alpha,j}-\gamma_t^{N,\beta,j}.
\]
Subtracting \eqref{eq:perturbation-gamma-beta-under-alpha} from the equation for \(\gamma^{N,\alpha,j}\), we obtain
\begin{equation}
  \d\Delta\gamma_t^j=\mathfrak l_t(\Delta\gamma_t^j)\,\d t+\sqrt{\eta}\sum_{k=1}^m\left(\cE_{L^{(k)}}(\gamma_t^{N,\alpha,j})-\cE_{L^{(k)}}(\gamma_t^{N,\beta,j})\right)\d W_t^{N,\alpha,k,j}-\eta\sum_{k=1}^m\cE_{L^{(k)}}(\gamma_t^{N,\beta,j})\delta b_t^{N,k,j}\,\d t.
  \label{eq:perturbation-D-SDE}
\end{equation}
Since \(\xi_t\in\cS(\HH)\), the linear maps \(\mathfrak l_t\) are uniformly bounded on \([0,T]\): there exists \(C_{\mathfrak l}<\infty\) such that \(\|\mathfrak l_t(D)\|_2\le C_{\mathfrak l}\|D\|_2\).  Moreover, the maps \(\cE_{L^{(k)}}\) are bounded and Lipschitz on the compact state space \(\cS(\HH)\).  Thus there exist constants \(C_{\mathrm{Lip}},C_{\mathrm{bd}}<\infty\), depending only on the one-particle coefficients and on \(\dim(\HH)\), such that
\begin{equation}
  \sum_{k=1}^m\left\|\cE_{L^{(k)}}(x)-\cE_{L^{(k)}}(y)\right\|_2^2\le C_{\mathrm{Lip}}\|x-y\|_2^2,\qquad x,y\in\cS(\HH).
  \label{eq:perturbation-E-Lipschitz}
\end{equation}
and
\begin{equation}
  \sum_{k=1}^m\left\|\cE_{L^{(k)}}(x)\right\|_2\le C_{\mathrm{bd}},\qquad x\in\cS(\HH).
  \label{eq:perturbation-E-bounded}
\end{equation}
For \(\varepsilon>0\), define \(f_\varepsilon(D):=\sqrt{\|D\|_2^2+\varepsilon^2}\).  We regard the Hermitian matrices as a finite-dimensional real Hilbert space with the Hilbert--Schmidt inner product.  Applying It\^o's formula to \(f_\varepsilon(\Delta\gamma_t^j)\), using \eqref{eq:perturbation-E-Lipschitz} and \eqref{eq:perturbation-E-bounded}, and using \(0<\eta\le1\), gives
\begin{equation}
  \E_{\Q^{N,\alpha}}\left[f_\varepsilon(\Delta\gamma_t^j)\right]\le\varepsilon+C\int_0^t\E_{\Q^{N,\alpha}}\left[f_\varepsilon(\Delta\gamma_s^j)\right]\d s+C\sum_{k=1}^m\int_0^t\E_{\Q^{N,\alpha}}\left[|\delta b_s^{N,k,j}|\right]\d s,
  \label{eq:perturbation-regularized-estimate}
\end{equation}
where \(C\) is independent of \(N\), \(j\), \(\eta\) and \(\varepsilon\).  By the definition of \(b_j^{N,k}\),
\[
  |\delta b_s^{N,k,j}|=\left|\Tr\left(A^{(k)}\left(\rho_s^{N:(j),\alpha}-\rho_s^{N:(j),\beta}\right)\right)\right|\le\|A^{(k)}\|\left\|\rho_s^{N:(j),\alpha}-\rho_s^{N:(j),\beta}\right\|_1.
\]
By Proposition~\ref{prop:common-reference-final} and contractivity of the partial trace,
\begin{equation}
  \E_{\Q^{N,\alpha}}\left[|\delta b_s^{N,k,j}|\right]\le2\|A^{(k)}\|d_N.
  \label{eq:perturbation-deltab-bound}
\end{equation}
Combining \eqref{eq:perturbation-regularized-estimate} and \eqref{eq:perturbation-deltab-bound}, and then applying Gronwall's lemma, yields
\[
  \sup_{0\le t\le T}\E_{\Q^{N,\alpha}}\left[f_\varepsilon(\Delta\gamma_t^j)\right]\le C_T(\varepsilon+d_N),
\]
where \(C_T\) is independent of \(N\), \(j\) and \(\eta\).  Letting \(\varepsilon\downarrow0\), we obtain
\[
  \sup_{0\le t\le T}\E_{\Q^{N,\alpha}}\left[\|\Delta\gamma_t^j\|_2\right]\le C_Td_N.
\]
Since all norms are equivalent on the finite-dimensional space \(\cL(\HH)\), this proves \eqref{eq:MF-comparison-final}.
\end{proof}

We can now formulate the transfer estimate which is the main result of this
section.

\begin{proposition}
\label{prop:uniform-transfer-final}
For every fixed \(n\ge1\) and \(T>0\), there exists
\(C_{T,n}<\infty\), independent of \(N\) and
\(\eta\in(0,1]\), such that
\begin{equation}
\begin{aligned}
  &\sup_{0\le t\le T}
  \E_{\Q^{N,\alpha}}
  \left[
    \left\|
      \rho_t^{N:n,\alpha}
      -
      \Gamma_t^{N,\alpha,n}
    \right\|_1
  \right] \le
  \sup_{0\le t\le T}
  \E_{\Q^{N,\beta}}
  \left[
    \left\|
      \rho_t^{N:n,\beta}
      -
      \Gamma_t^{N,\beta,n}
    \right\|_1
  \right]
  +
  C_{T,n}d_N.
\end{aligned}
  \label{eq:uniform-transfer-final}
\end{equation}
\end{proposition}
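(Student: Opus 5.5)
Fix \(t\in[0,T]\).  The plan is a triangle inequality under the single measure \(\Q^{N,\alpha}\), followed by a change of measure from \(\Q^{N,\alpha}\) to \(\Q^{N,\beta}\) for the remaining term.  Concretely, I would write
\[
  \left\|\rho_t^{N:n,\alpha}-\Gamma_t^{N,\alpha,n}\right\|_1
  \le
  \left\|\rho_t^{N:n,\alpha}-\rho_t^{N:n,\beta}\right\|_1
  +
  \left\|\rho_t^{N:n,\beta}-\Gamma_t^{N,\beta,n}\right\|_1
  +
  \left\|\Gamma_t^{N,\beta,n}-\Gamma_t^{N,\alpha,n}\right\|_1 ,
\]
where all processes live on the common reference space and are driven by the same observation paths \(Y\).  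Then I take \(\Q^{N,\alpha}\)-expectations term by term.

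The first term is bounded by \(2d_N\), directly from the marginal estimate \eqref{eq:uniform-marginal-stability} of Proposition~\ref{prop:common-reference-final}.

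For the third term, I use the telescoping inequality for tensor products of density matrices.  Each factor has trace norm one, so
\[
  \left\|\Gamma_t^{N,\beta,n}-\Gamma_t^{N,\alpha,n}\right\|_1
  \le
  \sum_{j=1}^n\left\|\gamma_t^{N,\alpha,j}-\gamma_t^{N,\beta,j}\right\|_1 .
\]
Lemma~\ref{lem:MF-comparison-final} bounds the \(\Q^{N,\alpha}\)-expectation of each summand by \(C_Td_N\), uniformly in \(\eta\in(0,1]\).  This gives a contribution of \(nC_Td_N\).

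The middle term is the only place where the measure must be changed.  The random variable
\[
  F:=\left\|\rho_t^{N:n,\beta}-\Gamma_t^{N,\beta,n}\right\|_1
\]
is \(\cF_t\)-measurable, being built from solutions of equations driven by \(Y\) on \([0,t]\), and it takes values in \([0,2]\) since it is a difference of two density matrices.  Estimate \eqref{eq:TV-expectation} together with Proposition~\ref{prop:TV-final} then gives
\[
  \E_{\Q^{N,\alpha}}[F]\le \E_{\Q^{N,\beta}}[F]+2\cdot\tfrac12 d_N .
\]
The right-hand side is at most the supremum appearing in \eqref{eq:uniform-transfer-final}, plus \(d_N\).

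Summing the three contributions gives the claim with \(C_{T,n}=3+nC_T\), which is independent of \(N\) and \(\eta\).  Taking the supremum over \(t\in[0,T]\) finishes the proof.

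The main point to check carefully is the measurability and boundedness of \(F\).  In particular, \(\rho^{N,\beta}\) and \(\gamma^{N,\beta,j}\) must be defined \(\Q\)-a.s. on the common space, and therefore also \(\Q^{N,\alpha}\)-a.s.  This holds because \(Z^{N,\theta}>0\) makes the two measures equivalent, and equation \eqref{eq:perturbation-gamma-observation-form} is written in terms of \(Y\) and not of a measure-dependent Brownian motion.
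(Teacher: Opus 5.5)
Your proposal is correct and follows the paper's proof essentially line for line. You use the same three-term triangle inequality under \(\Q^{N,\alpha}\) and bound the first term by \(2d_N\) via Proposition~\ref{prop:common-reference-final}. You transfer the middle term to \(\Q^{N,\beta}\) at cost \(d_N\) using \eqref{eq:TV-expectation} and Proposition~\ref{prop:TV-final}, and you control the third term by telescoping and Lemma~\ref{lem:MF-comparison-final}, which yields \(C_{T,n}=3+nC_T\).
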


\begin{proof}
For every \(t\in[0,T]\),
\begin{align}
  \left\|
    \rho_t^{N:n,\alpha}
    -
    \Gamma_t^{N,\alpha,n}
  \right\|_1
  \le
  \left\|
    \rho_t^{N:n,\alpha}
    -
    \rho_t^{N:n,\beta}
  \right\|_1
  +
  \left\|
    \rho_t^{N:n,\beta}
    -
    \Gamma_t^{N,\beta,n}
  \right\|_1+
  \left\|
    \Gamma_t^{N,\beta,n}
    -
    \Gamma_t^{N,\alpha,n}
  \right\|_1.
  \label{eq:perturbation-three-terms}
\end{align}
By Proposition~\ref{prop:common-reference-final}, the first term satisfies
\begin{equation}
  \E_{\Q^{N,\alpha}}
  \left[
    \left\|
      \rho_t^{N:n,\alpha}
      -
      \rho_t^{N:n,\beta}
    \right\|_1
  \right]
  \le
  2d_N.
  \label{eq:perturbation-first-term}
\end{equation}
For the second term, set
\[
  F_t^{N,\beta}
  :=
  \left\|
    \rho_t^{N:n,\beta}
    -
    \Gamma_t^{N,\beta,n}
  \right\|_1.
\]
Since both arguments are density matrices,
\(
  0\le F_t^{N,\beta}\le2.
\)
Equations
\eqref{eq:TV-expectation} and
\eqref{eq:perturbation-TV-bound} therefore imply
\begin{equation}
\begin{aligned}
  \E_{\Q^{N,\alpha}}
  \left[
    F_t^{N,\beta}
  \right]
  &\le
  \E_{\Q^{N,\beta}}
  \left[
    F_t^{N,\beta}
  \right]
  +
  d_N.
\end{aligned}
  \label{eq:perturbation-second-term}
\end{equation}
Finally, the telescopic inequality for tensor products gives
\begin{equation}
\begin{aligned}
  \left\|
    \Gamma_t^{N,\beta,n}
    -
    \Gamma_t^{N,\alpha,n}
  \right\|_1
  &\le
  \sum_{j=1}^n
  \left\|
    \gamma_t^{N,\beta,j}
    -
    \gamma_t^{N,\alpha,j}
  \right\|_1.
\end{aligned}
  \label{eq:perturbation-tensor-telescopic}
\end{equation}
Lemma~\ref{lem:MF-comparison-final} yields
\begin{equation}
  \sup_{0\le t\le T}
  \E_{\Q^{N,\alpha}}
  \left[
    \left\|
      \Gamma_t^{N,\beta,n}
      -
      \Gamma_t^{N,\alpha,n}
    \right\|_1
  \right]
  \le
  nC_Td_N.
  \label{eq:perturbation-third-term}
\end{equation}
Combining
\eqref{eq:perturbation-first-term},
\eqref{eq:perturbation-second-term} and
\eqref{eq:perturbation-third-term} proves
\eqref{eq:uniform-transfer-final}.
\end{proof}

Finally,  we turn to prove the first main result.

\begin{proof}[Proof of Theorem~\ref{thm:main-approx}]
Apply Proposition~\ref{prop:uniform-transfer-final} with
\(
  \alpha_0^N:=\rho_0^N\) and 
  \(
  \beta_0^N:=\gamma_0^{\otimes N}.
\)
Then
\[
  d_N
  =
  \left\|
    \rho_0^N-\gamma_0^{\otimes N}
  \right\|_1
  =
  \delta_N.
\]
Under \(\Q^{N,\beta}\), the process
\(\rho^{N,\beta}\) has exact tensor-product initial condition.  Therefore
  Propositions~\ref{prop:main-exact-eta-one} and \ref{prop:main-exact-eta-less-one}  give
\[
\begin{aligned}
  &\sup_{0\le t\le T}
  \E_{\Q^{N,\beta}}
  \left[
    \left\|
      \rho_t^{N:n,\beta}
      -
      \Gamma_t^{N,\beta,n}
    \right\|_1
  \right]\le  \frac{C}{N^{1/4}}{\mathbf 1}_{\{\eta=1\}} + \frac{C}{N^{1/16}}{\mathbf 1}_{\{0<\eta<1\}},
\end{aligned}
\]
for some constant $C>0$.  Consequently,
\[
\begin{aligned}
  &\sup_{0\le t\le T}
  \E_{\Q^{N,\alpha}}
  \left[
    \left\|
      \rho_t^{N:n,\alpha}
      -
      \Gamma_t^{N,\alpha,n}
    \right\|_1
  \right]
  \le
   \frac{C}{N^{1/4}}{\mathbf 1}_{\{\eta=1\}} + \frac{C}{N^{1/16}}{\mathbf 1}_{\{0<\eta<1\}}
  +
  C\delta_N.
\end{aligned}
\]
Under \(\Q^{N,\alpha}\), the pair
\[
  \left(
    \rho^{N,\alpha},
    \gamma^{N,\alpha,1},
    \ldots,
    \gamma^{N,\alpha,n}
  \right)
\]
is a weak realization of the synchronously coupled equations
\eqref{eq:N-Belavkin} and \eqref{eq:MF-Belavkin}.  Uniqueness in law
therefore identifies this expectation with the one appearing in
Definition~\ref{def:propagation-of-chaos}.  This fulfills the proof.
\end{proof}

\section[Skew-adjoint measurements: BBGKY hierarchy]{Propagation of chaos for skew-adjoint measurements: stochastic BBGKY hierarchy}
\label{sec:skew-bbgky}

Next we prove Theorem~\ref{thm:main-anti},  and we assume,  throughout this section,  
\[
  (L^{(k)})^*=-L^{(k)},
  \qquad k=1,\ldots,m,
\]
and therefore the $N$-particle   equation  \eqref{eq:N-Belavkin} and the mean-field equation  \eqref{eq:MF-Belavkin} read as 
\begin{equation}
\begin{aligned}
  \d\rho_t^N
  &=
  -\i[H^N,\rho_t^N]\,\d t
  +
  \sum_{k=1}^m\sum_{j=1}^N
  \cD_{L_j^{(k)}}(\rho_t^N)\,\d t
 +
  \sqrt{\eta}
  \sum_{k=1}^m\sum_{j=1}^N
 [L_j^{(k)},  \rho_t^N] \,\d W_t^{k,j}, \\
   \d\gamma_t^j
 & =
  \left(
    -\i[h+V^{\xi_t},\gamma_t^j]
    +
    \sum_{k=1}^m
    \cD_{L^{(k)}}(\gamma_t^j)
  \right)\d t+
  \sqrt{\eta}
  \sum_{k=1}^m
  [L^{(k)},  \gamma_t^j]\,\d W_t^{k,j},
\end{aligned}
 \end{equation}
This linearity is the only special structure used below.

For \(X\in\cL(\HH_{n})\), set
\begin{equation}
 \mathfrak A_n(X):=-\i\left[\sum_{j=1}^n h_j,X\right]+\sum_{k=1}^m\sum_{j=1}^n\cD_{L_j^{(k)}}(X),
  \qquad
  \mathcal M_{k,j}^n(X):=[L_j^{(k)},X].
  \label{eq:skew-local-generator}
\end{equation}
For \(X\in\cL(\HH_{n+1})\), define
\begin{equation}
  \mathcal B_j^n(X):=\Tr_{\{n+1\}}\bigl([V_{j,n+1},X]\bigr),
  \qquad j=1,\ldots,n,
  \label{eq:skew-B-def}
\end{equation}
where \(\Tr_{\{n+1\}}\) denotes the partial trace over the last tensor factor. By duality, for every \(X\in\cL(\HH^{\otimes(n+1)})\),
\begin{equation}
  \left\|
    \mathcal B_j^n(X)
  \right\|_1
  \le
  2\|V\|\,\|X\|_1.
  \label{eq:skew-B-bound}
\end{equation}
Indeed, if \(\|A\|\le1\), then
\[
  \left|\Tr\left(A\,\mathcal B_j^n(X)\right)\right|
  =
  \left|\Tr\left([A\otimes I_{\HH},V_{j,n+1}]X\right)\right|
  \le
  2\|V\|\,\|X\|_1.
\]
For \(n<N\), define the averaged \((n+1)\)-particle marginal
\begin{equation}
  \overline\rho_t^{N:n+1}
  :=
  \frac1{N-n}
  \sum_{\ell=n+1}^N
  \rho_t^{N:[n]\cup\{\ell\}},
  \label{eq:skew-averaged-marginal}
\end{equation}
where each marginal is canonically relabelled as an operator on \(\HH_{n+1}\).

\begin{lemma}
\label{lem:skew-marginal-hierarchy}
For \(1\le n<N\), the marginal \(\rho^{N:n}\) satisfies
\begin{equation}
\begin{aligned}
  \d\rho_t^{N:n}
  &=
  \mathfrak A_n(\rho_t^{N:n})\,\d t
  +
  \sqrt{\eta}
  \sum_{k=1}^m\sum_{j=1}^n
  \mathcal M_{k,j}^n(\rho_t^{N:n})\,\d W_t^{k,j}
  \\
  &\quad
  -
  \i\frac{N-n}{N}
  \sum_{j=1}^n
  \mathcal B_j^n(\overline\rho_t^{N:n+1})\,\d t
  -
  \frac{\i}{N}
  \sum_{1\le i<j\le n}
  [V_{ij},\rho_t^{N:n}]\,\d t.
\end{aligned}
  \label{eq:skew-marginal-hierarchy}
\end{equation}
Moreover, the tensor product \(\Gamma_t^n=\gamma_t^1\otimes\cdots\otimes\gamma_t^n\) satisfies
\begin{equation}
\begin{aligned}
  \d\Gamma_t^n
  &=
 \mathfrak A_n(\Gamma_t^n)\,\d t
  +
  \sqrt{\eta}
  \sum_{k=1}^m\sum_{j=1}^n
  \mathcal M_{k,j}^n(\Gamma_t^n)\,\d W_t^{k,j}
  -
  \i
  \sum_{j=1}^n
  \mathcal B_j^n(\Gamma_t^n\otimes\xi_t)\,\d t.
\end{aligned}
  \label{eq:skew-limit-hierarchy}
\end{equation}
\end{lemma}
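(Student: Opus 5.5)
The plan is to apply the bounded linear map \(\Tr_{[N]\setminus[n]}\) to the integral form of the skew-adjoint \(N\)-particle equation and to compute the partial trace of each term separately. The key tool is the cyclicity of the partial trace on the traced factors: if \(X\in\cL(\HH_N)\) and \(B\) acts only on factors in \([N]\setminus[n]\), then \(\Tr_{[N]\setminus[n]}([B,X])=0\). The second tool is the pull-through identity \eqref{eq:partial-trace-pull-through}, adapted to operators acting on the first \(n\) factors: \(\Tr_{[N]\setminus[n]}(A_jX)=A\,\Tr_{[N]\setminus[n]}(X)\) lifted, for \(j\le n\).

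First, the linear terms. For \(j\le n\), pull-through gives
\[
\Tr_{[N]\setminus[n]}\bigl(\cD_{L_j^{(k)}}(\rho^N)\bigr)=\cD_{L_j^{(k)}}(\rho^{N:n}),
\qquad
\Tr_{[N]\setminus[n]}\bigl([L_j^{(k)},\rho^N]\bigr)=[L_j^{(k)},\rho^{N:n}].
\]
The same holds for \([h_j,\cdot]\). For \(j>n\), the commutator \([L_j^{(k)},\rho^N]\) and \([h_j,\rho^N]\) vanish after the partial trace by cyclicity. The dissipator \(\cD_{L_j^{(k)}}(\rho^N)\) also vanishes, since \(\Tr_j(L X L^*)=\Tr_j(L^*L X)\) and the anticommutator term contributes the same amount with the opposite sign. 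This is exactly where skew-adjointness enters. Because \(\cE_L(X)=[L,X]\) is linear and a commutator, the exterior noises \(W^{k,j}\), \(j>n\), disappear. In the general case the nonlinear scalar factor \(\Tr(A_j\rho^N)\rho^N\) would survive the partial trace. Since all maps are bounded and linear, the partial trace commutes with the Itô integrals, which is the step needed to make this rigorous.

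Next, the interaction. I would split \(\frac1N\sum_{i<\ell}[V_{i\ell},\rho^N]\) into three groups. Pairs with both indices \(>n\) vanish by cyclicity. Pairs with both indices \(\le n\) give \(\frac1N[V_{ij},\rho^{N:n}]\) by pull-through. Mixed pairs \(j\le n<\ell\) give
\[
\Tr_{[N]\setminus[n]}\bigl([V_{j\ell},\rho^N]\bigr)=\Tr_{\{\ell\}}\bigl([V_{j\ell},\rho^{N:[n]\cup\{\ell\}}]\bigr).
\]
Relabelling \(\ell\) as \(n+1\) and using the symmetry of \(V\), this equals \(\mathcal B^n_j(\rho^{N:[n]\cup\{\ell\}})\). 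Summing over \(\ell\) and using the linearity of \(\mathcal B_j^n\) gives \(\frac{N-n}{N}\sum_j\mathcal B_j^n(\overline\rho^{N:n+1})\). Together these yield \eqref{eq:skew-marginal-hierarchy}.

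For \eqref{eq:skew-limit-hierarchy}, I would apply Itô's product rule to \(\gamma^1\otimes\cdots\otimes\gamma^n\). The cross-variation terms vanish because the Brownian families \(W^{\cdot,j}\) are independent for distinct \(j\). Each factor's drift and noise then lift to the \(j\)-th slot, producing \(\mathfrak A_n\) and \(\mathcal M^n_{k,j}\). For the mean-field term, \eqref{eq:effective-commutator} shows \([V^{\xi_t},\gamma^j]=\Tr_{\{2\}}([V,\gamma^j\otimes\xi_t])\). Lifting this to slot \(j\) tensored with the other factors gives \(\mathcal B^n_j(\Gamma^n_t\otimes\xi_t)\), where again pull-through through the untouched factors is used.

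The main obstacle is mostly bookkeeping: the reshuffling of tensor factors in the mixed-pair term, and checking that the trace-out of the dissipator for \(j>n\) and the commutation of the partial trace with stochastic integrals are legitimate. Neither is deep in finite dimensions.
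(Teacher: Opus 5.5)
Your proposal is correct and follows essentially the same route as the paper. You apply $\Tr_{[N]\setminus[n]}$ to the integral form and use cyclicity on the traced factors to remove the exterior dissipators, commutators and noises. You split the interaction pairs into the three groups, with the $n(N-n)$ mixed pairs giving $-\i\frac{N-n}{N}\sum_j\mathcal B_j^n(\overline\rho_t^{N:n+1})$. For the limit equation you use It\^o's product rule, independence across particle labels, and $\mathcal B_j^n(\Gamma_t^n\otimes\xi_t)=[V_j^{\xi_t},\Gamma_t^n]$.

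Two small remarks. The partial trace of $\cD_{L_\ell^{(k)}}$ over factor $\ell$ vanishes for every $L$, so skew-adjointness is needed only for the exterior noise terms. And the symmetry of $V$ is not needed in the mixed-pair step, since $j\le n<\ell$ already puts the factors in the right order.
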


\begin{proof}
Take the partial trace of \eqref{eq:N-Belavkin} over the variables \(n+1,\ldots,N\). The terms acting on the first \(n\) particles give the local part in \eqref{eq:skew-marginal-hierarchy}. If \(\ell>n\), then
\[
  \Tr_{\{ \ell\}}\bigl(\cD_{L_\ell^{(k)}}(X)\bigr)=0,
  \qquad
  \Tr_{\{ \ell\}}\bigl([L_\ell^{(k)},X]\bigr)=0,
\]
by cyclicity of the partial trace. Hence all exterior measurement noises disappear. The interactions with both indices larger than \(n\) also vanish after the partial trace. The interactions with both indices at most \(n\) give the last term in \eqref{eq:skew-marginal-hierarchy}, and the \(n(N-n)\) interactions between one retained and one traced particle give the averaged term involving \(\overline\rho_t^{N:n+1}\). This proves \eqref{eq:skew-marginal-hierarchy}. Formula \eqref{eq:skew-limit-hierarchy} follows by applying It\^o's formula to \(\gamma_t^1\otimes\cdots\otimes\gamma_t^n\), using independence of the Brownian motions with different particle labels and the identity \(\mathcal B_j^n(\Gamma_t^n\otimes\xi_t)=[V_j^{\xi_t},\Gamma_t^n]\).
\end{proof}

Let
\[
  \Delta_t^{N,n}:=\rho_t^{N:n}-\Gamma_t^n,
  \qquad
  F_n^N(t):=\E\left[\|\Delta_t^{N,n}\|_1\right].
\]
Subtracting \eqref{eq:skew-limit-hierarchy} from \eqref{eq:skew-marginal-hierarchy} gives
\begin{equation}
\begin{aligned}
  \d\Delta_t^{N,n}
  &=
 \mathfrak A_n(\Delta_t^{N,n})\,\d t
  +
  \sqrt{\eta}
  \sum_{k=1}^m\sum_{j=1}^n
  \mathcal M_{k,j}^n(\Delta_t^{N,n})\,\d W_t^{k,j}
  -
  \i
  \sum_{j=1}^n
  \mathcal B_j^n\left(
    \overline\rho_t^{N:n+1}
    -
    \Gamma_t^n\otimes\xi_t
  \right)\d t
  +
  \mathcal R_t^{N,n}\,\d t,
\end{aligned}
  \label{eq:skew-delta-equation}
\end{equation}
where
\begin{equation}
  \mathcal R_t^{N,n}
  :=
  \frac{\i n}{N}
  \sum_{j=1}^n
  \mathcal B_j^n(\overline\rho_t^{N:n+1})
  -
  \frac{\i}{N}
  \sum_{1\le i<j\le n}
  [V_{ij},\rho_t^{N:n}].
  \label{eq:skew-remainder}
\end{equation}
By \eqref{eq:skew-B-bound} and \(\|\overline\rho_t^{N:n+1}\|_1=\|\rho_t^{N:n}\|_1=1\),
\begin{equation}
  \|\mathcal R_t^{N,n}\|_1
  \le
  \frac{Cn^2}{N},
  \label{eq:skew-remainder-bound}
\end{equation}
where \(C\) depends only on \(\|V\|\).

\begin{proof}[Proof of Theorem~\ref{thm:main-anti}]
Let \(\Phi_{t,s}^n\) be the random solution map of the homogeneous local equation
\[
  \d X_t
  =
  \mathfrak A_n(X_t)\,\d t
  +
  \sqrt{\eta}
  \sum_{k=1}^m\sum_{j=1}^n
  \mathcal M_{k,j}^n(X_t)\,\d W_t^{k,j},
  \qquad X_s=X.
\]
For every \(s\le t\), the map \(\Phi_{t,s}^n\) is completely positive and trace preserving. Indeed, this is the finite-dimensional linear reference equation of Section~\ref{sec:initial-perturbation} restricted to \(n\) particles.  In the present skew-adjoint case the trace process is constant pathwise because \(L^{(k)}+(L^{(k)})^*=0\). Consequently, for every Hermitian \(X\),
\begin{equation}
  \|\Phi_{t,s}^n(X)\|_1
  \le
  \|X\|_1.
  \label{eq:skew-local-contraction}
\end{equation}
The variation-of-constants formula applied to \eqref{eq:skew-delta-equation}, together with \eqref{eq:skew-local-contraction}, yields
\begin{equation}
\begin{aligned}
  F_n^N(t)
  &\le
  \delta_N^n
  +
  Cn
  \int_0^t
  \E\left[
    \left\|
      \overline\rho_s^{N:n+1}
      -
      \Gamma_s^n\otimes\xi_s
    \right\|_1
  \right]\d s
  +
  C\frac{n^2t}{N}.
\end{aligned}
  \label{eq:skew-before-average}
\end{equation}
It remains to estimate the averaged marginal. For \(\ell>n\), write
\[
  \Gamma_t^{n,\ell}
  :=
  \gamma_t^1\otimes\cdots\otimes\gamma_t^n\otimes\gamma_t^\ell,
\]
again relabelled as an operator on \(\HH^{\otimes(n+1)}\). Then
\begin{equation}
\begin{aligned}
  \overline\rho_t^{N:n+1}-\Gamma_t^n\otimes\xi_t
  &=
  \frac1{N-n}
  \sum_{\ell=n+1}^N
  \left(
    \rho_t^{N:[n]\cup\{\ell\}}
    -
    \Gamma_t^{n,\ell}
  \right)
  +
  \Gamma_t^n\otimes
  \left(
    \frac1{N-n}
    \sum_{\ell=n+1}^N
    \gamma_t^\ell
    -
    \xi_t
  \right).
\end{aligned}
  \label{eq:skew-average-decomposition}
\end{equation}
By exchangeability of the coupled family \((\rho^N,\gamma^1,\ldots,\gamma^N)\),
\[
  \E\left[
    \left\|
      \rho_t^{N:[n]\cup\{\ell\}}
      -
      \Gamma_t^{n,\ell}
    \right\|_1
  \right]
  =
  F_{n+1}^N(t).
\]
Moreover, since \(\Gamma_t^n\) has trace norm one,
\[
  \left\|
    \Gamma_t^n\otimes
    \left(
      \frac1{N-n}
      \sum_{\ell=n+1}^N
      \gamma_t^\ell
      -
      \xi_t
    \right)
  \right\|_1
  =
  \left\|
      \frac1{N-n}
      \sum_{\ell=n+1}^N
      \gamma_t^\ell
      -
      \xi_t
  \right\|_1.
\]
The variables \(\gamma_t^\ell\), \(\ell>n\), are i.i.d. with mean \(\xi_t\). Since \(\HH\) is finite-dimensional and \(\|\gamma_t^\ell\|_1=1\),
\begin{equation}
  \sup_{0\le t\le T}
  \E\left[
    \left\|
      \frac1{N-n}
      \sum_{\ell=n+1}^N
      \gamma_t^\ell
      -
      \xi_t
    \right\|_1
  \right]
  \le
  \frac{C}{\sqrt{N-n}}.
  \label{eq:skew-limit-empirical}
\end{equation}
For instance, this follows first in Hilbert--Schmidt norm by independence and then by norm equivalence on \(\cL(\HH)\). Combining \eqref{eq:skew-before-average}--\eqref{eq:skew-limit-empirical}, we obtain, for every fixed \(n<N\),
\begin{equation}
  F_n^N(t)
  \le
  \delta_N^n
  +
  Cn
  \int_0^t
  F_{n+1}^N(s)\,\d s
  +
  C_{T,n}
  \left(
    \frac1{\sqrt{N-n}}
    +
    \frac1N
  \right),
  \qquad 0\le t\le T.
  \label{eq:skew-hierarchy-ineq}
\end{equation}
 We now close the hierarchy by following the iteration argument  in the proof of Theorem~5.2 of Bardos--Golse--Mauser \cite{bardos2000}. Up to increasing the constant, \eqref{eq:skew-hierarchy-ineq} can be written as
\begin{equation}
  F_n^N(t)\le \delta_N^n+Cn\int_0^tF_{n+1}^N(s)\,\d s+\varepsilon_{N,n},
  \qquad 1\le n<N,
  \label{eq:skew-recursive-BGM}
\end{equation}
where
\[
  \varepsilon_{N,n}:=C_T\left(\frac{n}{\sqrt{N-n}}+\frac{n^2}{N}\right).
\]
Iterating \eqref{eq:skew-recursive-BGM}, we obtain the following explicit estimate: for every integer \(K\ge0\) such that \(n+K<N\), and every \(0\le t\le T\),
\begin{equation}
\begin{aligned}
  F_n^N(t)
  &\le
  \sum_{r=0}^K
  \binom{n+r-1}{r}
  (Ct)^r
  \left(
    \delta_N^{n+r}
    +
    \varepsilon_{N,n+r}
  \right)
  +
  2\binom{n+K}{K+1}(Ct)^{K+1}.
\end{aligned}
  \label{eq:skew-BGM-explicit}
\end{equation}
Indeed, after \(r\) iterations the coefficient is
\[
  \frac{n(n+1)\cdots(n+r-1)}{r!}
  =
  \binom{n+r-1}{r},
\]
and the remainder after \(K+1\) iterations is bounded by the trivial estimate \(F_{n+K+1}^N(t)\le2\).

Choose \(\tau>0\) such that \(C\tau<1\). For fixed \(n\) and \(K\), letting \(N\to\infty\) in \eqref{eq:skew-BGM-explicit} gives
\[
  \limsup_{N\to\infty}
  \sup_{0\le t\le\tau}
  F_n^N(t)
  \le
  2\binom{n+K}{K+1}(C\tau)^{K+1},
\]
because \(\delta_N^{n+r}\to0\) and \(\varepsilon_{N,n+r}\to0\) for every fixed \(r\). Since
\[
  \binom{n+K}{K+1}(C\tau)^{K+1}\longrightarrow0
  \qquad\mbox{as }K\to\infty,
\]
we obtain
\[
  \lim_{N\to\infty}
  \sup_{0\le t\le\tau}
  F_n^N(t)=0.
\]
It remains to pass from the short time interval \([0,\tau]\) to an arbitrary finite interval \([0,T]\). We first record the shifted form of the previous estimate. Let \(a\ge0\) and \(a+\tau\le T\). Repeating the variation-of-constants argument on \([a,t]\), instead of on \([0,t]\), gives, for \(a\le t\le a+\tau\),
\begin{equation}
  F_n^N(t)\le F_n^N(a)+Cn\int_a^tF_{n+1}^N(s)\,\d s+\varepsilon_{N,n},
  \qquad
  \varepsilon_{N,n}:=C_{T,n}\left(\frac1{\sqrt{N-n}}+\frac1N\right).
  \label{eq:skew-shifted-recursion}
\end{equation}
Iterating \eqref{eq:skew-shifted-recursion} exactly as before yields, for every \(K\ge0\) such that \(n+K<N\),
\begin{equation}
\begin{aligned}
  \sup_{a\le t\le a+\tau}F_n^N(t)
  &\le
  \sum_{r=0}^K
  \binom{n+r-1}{r}
  (C\tau)^r
  \left(
    F_{n+r}^N(a)
    +
    \varepsilon_{N,n+r}
  \right)
  +
  2\binom{n+K}{K+1}(C\tau)^{K+1}.
\end{aligned}
  \label{eq:skew-shifted-iteration}
\end{equation}
Now choose \(\tau>0\) such that \(C\tau<1\), and let \(M\in\mathbb N\) be such that \(T\le M\tau\). We prove by induction on \(p=0,\ldots,M\) that, for every fixed \(n\ge1\),
\[
  \lim_{N\to\infty}
  \sup_{0\le t\le p\tau}
  F_n^N(t)
  =
  0.
\]
For \(p=0\), this follows from the initial chaoticity assumption, since \(F_n^N(0)=\delta_N^n\). Assume that the claim holds for some \(p<M\). Fix \(n\ge1\). Applying \eqref{eq:skew-shifted-iteration} with \(a=p\tau\), we obtain, for every fixed \(K\),
\[
  \limsup_{N\to\infty}
  \sup_{p\tau\le t\le(p+1)\tau}
  F_n^N(t)
  \le
  2\binom{n+K}{K+1}(C\tau)^{K+1},
\]
because the induction hypothesis gives \(F_{n+r}^N(p\tau)\to0\) for each fixed \(r=0,\ldots,K\), and because \(\varepsilon_{N,n+r}\to0\). Letting \(K\to\infty\), and using \(C\tau<1\), gives
\[
  \lim_{N\to\infty}
  \sup_{p\tau\le t\le(p+1)\tau}
  F_n^N(t)
  =
  0.
\]
Together with the induction hypothesis on \([0,p\tau]\), this proves the claim at level \(p+1\). Since \(T\le M\tau\), we conclude that, for every fixed \(n\ge1\),
\[
  \lim_{N\to\infty}
  \sup_{0\le t\le T}
  F_n^N(t)
  =
  0.
\]
This completes the proof.
\end{proof}

\end{document}